\documentclass[11pt]{article}

\usepackage[T1]{fontenc}
\usepackage{amsmath,amssymb,amsthm,mathtools}
\usepackage[a4paper,margin=1.1in]{geometry}
\usepackage{microtype}
\usepackage{needspace}
\usepackage{xcolor}
\usepackage{hyperref}
\usepackage[nameinlink,capitalize,noabbrev]{cleveref}

\hypersetup{
  colorlinks=true,
  linkcolor=blue!50!black,
  citecolor=blue!50!black,
  urlcolor=blue!50!black,
  pdftitle={Full-radius dimension-free maximal inequalities for discrete Euclidean balls},
  pdfauthor={Kaiwen Jin and Qingtang Su}
}

\newtheorem{theorem}{Theorem}[section]
\newtheorem{proposition}{Proposition}[section]
\newtheorem{lemma}{Lemma}[section]
\newtheorem{corollary}{Corollary}[section]
\theoremstyle{definition}
\newtheorem{definition}{Definition}[section]
\theoremstyle{remark}
\newtheorem{remark}{Remark}[section]
\crefname{theorem}{theorem}{theorems}
\Crefname{theorem}{Theorem}{Theorems}
\crefname{proposition}{proposition}{propositions}
\Crefname{proposition}{Proposition}{Propositions}
\crefname{lemma}{lemma}{lemmas}
\Crefname{lemma}{Lemma}{Lemmas}
\crefname{corollary}{corollary}{corollaries}
\Crefname{corollary}{Corollary}{Corollaries}
\crefname{definition}{definition}{definitions}
\Crefname{definition}{Definition}{Definitions}
\crefname{remark}{remark}{remarks}
\Crefname{remark}{Remark}{Remarks}

\newcommand{\Z}{\mathbb Z}
\newcommand{\R}{\mathbb R}
\newcommand{\T}{\mathbb T}
\newcommand{\N}{\mathbb N}

\newcommand{\abs}[1]{\left\lvert #1\right\rvert}
\newcommand{\Ball}{\mathcal B}
\newcommand{\Avg}{\mathcal M}

\title{Full-radius dimension-free maximal inequalities for discrete Euclidean balls}
\author{Kaiwen Jin \and Qingtang Su}
\date{}

\begin{document}
\maketitle

\begin{abstract}
For every $1<p\le\infty$, we prove dimension-free maximal inequalities
over all radii for normalized averages over Euclidean balls in $\mathbb Z^d$.
In particular, this settles the $\ell^2$ question attributed to Stein.  The
proof uses a two-saddle expansion at integer squared radii to compare ball
multipliers with normalized discrete Gaussians at the zero and parity
frequencies.  First-order estimates give the full-radius $\ell^2$ bound.  For
$1<p<2$, we combine higher-order residual estimates with dimension-uniform
$\ell^1$ bounds for the residuals and levelwise interpolation to obtain the
full range.
\end{abstract}

\section{Introduction}
\label{sec:introduction}

Let
\[
 B_2^d=\{x\in\mathbb R^d:|x|\le1\}
\]
be the Euclidean unit ball. For $t>0$ write
\[
 \mathcal A_t^{\mathrm{cont}}f(x)
 =\frac1{|tB_2^d|}\int_{tB_2^d}f(x-y)\,dy.
\]
In the early 1980s, Stein established that, for every
$1<p\le\infty$, the norm of
$\sup_{t>0}|\mathcal A_t^{\mathrm{cont}}f|$ on $L^p(\mathbb R^d)$ is bounded
independently of $d$; see \cite{Stein-1982,Stein-1983,SteinStromberg-1983}.
This initiated a broader program for maximal averages over high-dimensional
convex bodies.  Bourgain proved the dimension-free $L^2$ theorem for
arbitrary symmetric convex bodies.  Bourgain and, independently, Carbery
subsequently obtained dimension-free bounds for $p>3/2$, while M\"uller
proved the full range $p>1$ for the $\ell^q$ balls, $1\le q<\infty$; see
\cite{Bourgain-AJM-1986,Bourgain-Lp-1986,Carbery-1986,DGM-survey,Muller-1990}.
In particular, the continuous Euclidean-ball problem is completely understood
for $p>1$.

The discrete analogue is qualitatively different.  Put
\[
 \Ball_{d,t}=tB_2^d\cap\mathbb Z^d,
 \qquad
 \Avg_t f(x)=\frac1{|\Ball_{d,t}|}
   \sum_{y\in\Ball_{d,t}}f(x-y),
 \qquad t\ge0.
\]
For each fixed dimension the associated maximal operator is bounded on
$\ell^p(\mathbb Z^d)$ for $p>1$, but the usual covering arguments produce
constants which deteriorate with $d$.  More importantly, the discrete theory
cannot be dimension-free uniformly over all symmetric convex bodies.  In
2019 Bourgain, Mirek, Stein and Wr\'obel proved dimension-free estimates for
discrete cubes, but also constructed a family of ellipsoids for which the
full discrete maximal norm grows with the dimension \cite{BMSW-cubes}.
Thus the Euclidean ball must be treated using its particular arithmetic
structure.

A question attributed to E.~M. Stein and dating from the mid-1990s asks
whether the Euclidean-ball maximal operator satisfies the dimension-free
Hilbert-space estimate
\begin{equation}
 \sup_{d\ge1}
 \left\|\sup_{t\ge0}|\Avg_t|\right\|_{
 \ell^2(\mathbb Z^d)\to\ell^2(\mathbb Z^d)}<\infty.
 \label{eq:stein-question}
\end{equation}
The question is stated explicitly in \cite[Section~1.1, (1.3)]{MSW-spherical}
and \cite[Section~1.1, (SQ)]{NW-small}.  Several substantial parts have
since been resolved.  Bourgain, Mirek, Stein and Wr\'obel proved the
dimension-free estimate for Euclidean balls at dyadic radii on $\ell^p$,
$p\ge2$ \cite{BMSW-dyadic}, and their continuous--discrete comparison gives
the full maximal estimate at large radii $t\ge Cd$ \cite{BMSW-large}.
Further comparison and restricted-scale results were obtained by Kosz,
Mirek, Plewa and Wr\'obel \cite{KMPW-2023}.  Mirek, Szarek and Wr\'obel later
proved the corresponding dimension-free dyadic theorem for discrete
Euclidean spheres \cite{MSW-spherical}.  Niksi\'nski and Wr\'obel
proved the full, rather than dyadic, Euclidean-ball estimate on $\ell^p$,
$p\ge2$, in the genuinely small range
\[
 0\le t\le d^{1/2-\varepsilon}
\]
for every fixed $\varepsilon>0$ \cite{NW-small}.  More recently Niksi\'nski
developed a broader small-scale and dyadic theory for discrete
$1$-symmetric convex bodies \cite{Niksinski-1symm}; for
Euclidean balls this does not cover the full intermediate non-dyadic range.
Thus, after the preceding results, the unresolved full-radius problem lies
between the small-scale range $t\le d^{1/2-\varepsilon}$, for each fixed
$\varepsilon>0$, and the large-radius range $t\ge Cd$.

Passing from dyadic radii to the full supremum requires additional control in
the intermediate range.  If $n=\lfloor t^2\rfloor$, a dyadic block of squared
radii may contain polynomially many integer levels when
$d^{1-2\varepsilon}\lesssim n\lesssim d^2$.  Our argument constructs
frequency-uniform approximations at every such level, with residual bounds
that can be summed over the relevant ranges.  The approximating multipliers
are finite combinations of normalized discrete Gaussians.

We prove the following estimate, which fills this gap and also extends the
full-radius bound below $p=2$.

\begin{theorem}[Full-radius Euclidean-ball maximal inequalities]
\label{thm:main}
For every $1<p\le\infty$ there exists $C_p<\infty$ such that, for every
$d\ge1$ and every $f\in\ell^p(\mathbb Z^d)$,
\[
  \left\|\sup_{t\ge0}|\Avg_t f|\right\|_{\ell^p(\mathbb Z^d)}
  \le C_p\|f\|_{\ell^p(\mathbb Z^d)}.
\]
The constant $C_p$ is independent of $d$.
\end{theorem}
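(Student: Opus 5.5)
Since $\Ball_{d,t}$ depends only on $n=\lfloor t^2\rfloor$, the supremum over $t\ge0$ is a supremum over integer levels $n\ge0$ of $\Avg_{\sqrt n}$. I will split the levels into three ranges: small, $n\le d^{1-2\varepsilon}$; intermediate, $d^{1-2\varepsilon}<n<C^2d^2$; and large, $n\ge C^2d^2$. The small range is covered for $p\ge2$ by \cite{NW-small}, and the large range by the continuous--discrete comparison of \cite{BMSW-large} together with Stein's dimension-free continuous theorem. The new work is the intermediate range, and then the extension of all ranges to $1<p<2$. For $p=\infty$ the statement is trivial, so I will prove $p=2$ first and then interpolate.

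\emph{Step 1 (two-saddle expansion).} Write the multiplier $m_n(\xi)=|\Ball_{d,\sqrt n}|^{-1}\sum_{|x|^2\le n}e(x\cdot\xi)$. I will use the Fourier-analytic representation
\[
\mathbf 1_{|x|^2\le n}=\int_0^1\sum_{k\le n}e(\theta(|x|^2-k))\,d\theta
\]
and deform $\theta$ to a complex contour $\theta=is$ passing near $\theta=0$ and $\theta=1/2$. These are the only points where the theta function $\sum_{x\in\Z}e^{-2\pi s x^2}e(\theta x^2)$ is large; the point $1/2$ contributes the parity twist $(-1)^{|x|^2}=(-1)^{x_1+\dots+x_d}$. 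A saddle point analysis at each of the two points, with the saddle parameter $s\approx d/n$ chosen so that the exponent in $d$ is stationary, gives
\[
m_n(\xi)=G_{n}(\xi)+\epsilon_n G_n\bigl(\xi+\tfrac12\mathbf 1\bigr)+R_n(\xi),
\]
where $G_n$ is a normalized discrete Gaussian multiplier, namely the product of one-dimensional normalized theta-type factors, and $\epsilon_n$ is a bounded explicit weight. The residual should satisfy $\sup_\xi|R_n(\xi)|\lesssim (n/d^2)^{1/2}+d^{-c}$ uniformly in $\xi$, and higher-order corrections improve this to $(\cdot)^{k/2}$.

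\emph{Step 2 ($\ell^2$).} The maximal operator of discrete Gaussian semigroups, and of their parity-shifted versions (which are conjugation by a unimodular character), is dimension-free bounded on $\ell^p$ for every $p>1$. This follows from Stein's maximal theorem for symmetric diffusion semigroups, since $G_n$ is a product heat-type kernel, monotone in $s$. For the residual I will use the square-function bound
\[
\Bigl\|\sup_n|R_n f|\Bigr\|_2\le\Bigl(\sum_n\|R_n\|_\infty^2\Bigr)^{1/2}\|f\|_2,
\]
which requires $\sum_n\|R_n\|_\infty^2<\infty$ over the polynomially many intermediate levels. A single first-order bound is not enough here, so I will organize the sum dyadically. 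Within a dyadic block, I will bound the residual by the variation in $n$ of $m_n-G_n$, using derivative bounds in $n$ from the saddle expansion, so that each block contributes $\lesssim d^{-c}$ or $2^{-cj}$.

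\emph{Step 3 ($1<p<2$).} I will prove dimension-uniform $\ell^1$ bounds for the residual kernels, $\|R_n\|_{\ell^1\to\ell^1}\lesssim1$, for instance by showing the kernels of $\Avg_{\sqrt n}$ and $G_n$ each have $\ell^1$ mass $O(1)$ up to the explicit parity term. Interpolating levelwise between this $\ell^1$ bound and the higher-order $\ell^2$ decay gives
\[
\|R_n\|_{p\to p}\lesssim\delta_n^{\theta(p)},
\]
where $\delta_n$ is the $\ell^2$ decay rate. Choosing the expansion order $k$ large depending on $p$ makes $\sum_n\delta_n^{\theta(p)}$ summable. The small-range and large-range results for $p<2$ are handled by the same decomposition.

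\emph{Main obstacle.} I expect the hardest part to be getting frequency-uniform residual bounds that are summable across all intermediate levels, in particular near $n\sim d$, where the two saddles and the Gaussian approximation are least separated. I would try first to extract quantitative decay from the uniform log-concavity of the one-dimensional theta factors.
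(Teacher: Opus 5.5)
Your outline has the same architecture as the paper: integer squared radii, Cauchy coefficient extraction with saddles at $0$ and $1/2$, a normalized discrete Gaussian plus its parity conjugate, Plancherel square sums, levelwise $\ell^1$--$\ell^2$ interpolation, and \cite{BMSW-large} for $n\gtrsim d^2$. However, the quantitative core is misstated, and the steps that carry the proof are missing.

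\textbf{The residual rate.} The rate you posit, $(n/d^2)^{1/2}+d^{-c}$, is $O(1)$ at $n\asymp d^2$. So neither $\sum_{n\lesssim d^2}\|R_n\|_\infty^2$ nor $\sum_n\delta_n^{\theta p}$ is bounded uniformly in $d$, for any expansion order. The dyadic ``variation in $n$'' repair is left unspecified.

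What is needed, and what the paper proves, is $\sup_\xi|r_{d,n}(\xi)|\lesssim N^{-1}$, and $N^{-K}$ at order $K$. Here $N=n$ for $n\lesssim d$ and $N=d$ for $d\lesssim n\le Ad^2$. After the rescaling $t=\sigma u/\sqrt N$, the first-order contributions of the phase ($u^3$), the pole ($u$) and the product amplitude ($u$) are odd, so they vanish against the even Gaussian (\cref{lem:symmetric-saddle}). The bounds $|\partial_u^rP|\lesssim N^{-r/2}(1+E(\xi))^re^{-cE(\xi)}$ make the error uniform in $\xi$. Then $O(d^2)$ levels with squared error $O(d^{-2})$ give a bounded sum, no variation argument is needed, and for $p<2$ one only needs $K(p-1)\ge1$.

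\textbf{The remote arcs.} You assert without argument that only $\theta\in\{0,1/2\}$ matter. When $\tau=-\log\rho\to0$, this is the arithmetic heart of the proof. With $t$ the angle on $|z|=e^{-\tau}$, one needs
\[
\sup_x\frac{|h(e^{-\tau+it},x)|}{h(e^{-\tau})}\le1-\eta
\quad\text{uniformly for }\operatorname{dist}(t,\pi\Z)\ge C\tau .
\]
Then $(1-\eta)^d$ absorbs the polynomial loss $\tau^{-1}d^{1/2}\lesssim d^{3/2}$; this is where $n\le Ad^2$ enters. The paper obtains the gap from Dirichlet approximation, using Weyl sums with Abel summation for large denominators and Gauss sums with Poisson summation for bounded ones. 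The window $n\sim d$ that you flag as the main obstacle is, by contrast, handled by compactness.

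\textbf{Higher-order corrections for $1<p<2$.} The corrections are radial derivatives $N^{-j}\sigma^r\partial_s^r\Gamma_{d,e^s}$ with $r\le2j$, not Gaussians. So the Gaussian maximal theorem does not control the maximal function of your corrected model. The paper writes $N^{-j}\sigma^r=N^{-j+r/2}(\sigma/\sqrt N)^r$ with $N^{-j+r/2}\le1$. It then uses a $2K$-point finite-difference formula to replace each term, up to $O(N^{-K})$, by a bounded combination
\[
\sum_{|\ell|\le K}b_{r,\ell}\,\Gamma_{d,e^{s+\ell\sigma/\sqrt N}}
\]
of genuine normalized Gaussians at nearby real scales. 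This gives both the maximal bound for the model and the trivial $\ell^1$ bound for the residual.

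\textbf{The Gaussian input.} Your justification is wrong. The kernels $\rho^{|k|^2}/h(\rho)^d$ do not form a semigroup on $\Z^d$: already for $d=1$, $\phi_{\rho_1}\phi_{\rho_2}$ is not of the form $\phi_{\rho_3}$. So Stein's diffusion-semigroup theorem does not apply. The dimension-free bound for all $p>1$ is the Mirek--Szarek--Wr\'obel theorem \cite{MSW-gaussian}, which you need to cite instead.
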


\begin{remark}[Comparison with Hormozi--Niksi\'nski--Wr\'obel]
\label{rem:hnw-comparison}
Hormozi, Niksi\'nski and Wr\'obel \cite[Theorem~1]{HNW-2026} prove the
same full-radius ball inequality as \cref{thm:main}, for every
$1<p\le\infty$. We acknowledge that credit for resolving this problem,
including Stein's question \eqref{eq:stein-question}, belongs to them.
Their paper also proves the full spherical maximal inequality for $d\ge5$
and $2\le p\le\infty$ \cite[Theorem~2]{HNW-2026}.

The proofs share the comparison with normalized discrete Gaussians and its
two frequency branches, at zero and at the parity frequency
$(1/2,\ldots,1/2)$. Both start from exact theta-function generating
formulas at integer squared radii, develop the approximation to arbitrary
fixed order, and use finite differences to express the correction terms
as finite combinations of Gaussian multipliers with bounded coefficients.
The resulting remainders have decaying $\ell^2$ bounds and uniform
$\ell^1$ bounds; interpolation followed by summation over squared radii
gives the range $1<p<2$.

The saddle choice and the estimates used to carry out this program differ.
Writing $h(z)=\sum_{k\in\mathbb Z}z^{k^2}$ and
$\mu(u)=uh'(u)/h(u)$, our saddle at squared radius $n\ge1$ satisfies
$d\mu(\rho)=n$, with $(1-z)^{-1}$ treated as an amplitude. HNW include
this factor in the phase and choose the saddle of the complete generating
function $(1-z)^{-1}h(z)^d$, namely
\[
 d\mu(r)+\frac{r}{1-r}=n.
\]
Our estimates separate small, critical and growing values of $n/d$;
HNW obtain a single approximation on $1\le n\le d^4$, with error
$C_N\min(n,d)^{-N}$ at order $N$ \cite[Proposition~4]{HNW-2026}.
For the growing range, we control the noncentral arcs by quadratic Weyl
estimates and Gauss sums. HNW instead use a quantitative quadratic-phase
rigidity estimate for localization and Jacobi's product formula for
complex theta-quotient bounds \cite[Sections~2.4 and~2.6]{HNW-2026}.
We also give a separate first-order two-Gaussian argument for the full
$\ell^2$ bound before treating higher orders and $1<p<2$.
These differences concern the proof; the final ball maximal inequality
is the same.
\end{remark}

In particular, \cref{thm:main} answers \eqref{eq:stein-question}.  The range
$2<p<\infty$ follows by interpolation with the $\ell^\infty$ contraction.  For
$1<p<2$, we interpolate the dimension-uniform $\ell^1$ bound for each
fixed-level residual with its decaying $\ell^2$ bound and then sum over $n$;
no $\ell^1$ maximal estimate is used.

\subsection{Squared radii and proof strategy}
\label{subsec:proof-strategy-intro}

Since $|y|^2$ is an integer for $y\in\mathbb Z^d$,
\[
 tB_2^d\cap\mathbb Z^d
 =\{y\in\mathbb Z^d:|y|^2\le\lfloor t^2\rfloor\}.
\]
Thus the natural radius variable is the integer squared radius
\[
 B_{d,n}=\{y\in\mathbb Z^d:|y|^2\le n\},
 \qquad n\in\mathbb N_0,
\]
and we write
\[
 M_{d,n}f(x)=\frac1{|B_{d,n}|}\sum_{y\in B_{d,n}}f(x-y),
 \qquad
 m_{d,n}(\xi)=\frac1{|B_{d,n}|}
  \sum_{y\in B_{d,n}}e^{2\pi i y\cdot\xi}
\]
for the normalized average and its Fourier multiplier.  Then
$\Avg_t=M_{d,\lfloor t^2\rfloor}$.  The level $n=0$ is the identity, so for
$n\ge1$ we put
\[
 \alpha=\frac nd.
\]

The one-dimensional theta series
\[
 h(z,x)=\sum_{k\in\mathbb Z}z^{k^2}e^{2\pi i kx},
 \qquad h(z)=h(z,0),
\]
encodes the exact ball coefficients.  For every $\alpha>0$ there is a unique
$\rho=\rho(\alpha)\in(0,1)$ satisfying
\[
 \mu(\rho):=\rho\frac{h'(\rho)}{h(\rho)}=\alpha.
\]
The origin of this choice is already visible in Cauchy's coefficient
formula.  On the circle $z=\rho e^{it}$ the exponential theta factor has
phase
\[
 d\{\log h(\rho e^{it})-\log h(\rho)\}-int,
\]
whose derivative at $t=0$ equals $i\{d\mu(\rho)-n\}$.  Thus
$\mu(\rho)=n/d$ makes $t=0$ stationary for the exponential part of the
coefficient integral.  We therefore refer to $\rho(\alpha)$ as the positive
saddle parameter.  This saddle point and the theta-function representation
of the ball counts are classical: Mazo and Odlyzko \cite{MazoOdlyzko-1990}
used the same saddle parameter to estimate the number of lattice points in
balls of radius $\sqrt{\alpha d}$.  In the normalized multiplier problem
considered here, the arcs at angles $0$ and $\pi$ are the two dominant saddle
arcs; the remaining rational cusps are separated by the estimates of
\cref{sec:arithmetic-remote}.

For normalized multipliers, the approximation must be uniform in $\xi$ and
must retain the parity contribution; the noncentral arcs require a separate
estimate.  To reach $1<p<2$, we continue the expansion to arbitrary order and
realize its correction terms by Gaussian multipliers.  Define
\[
 \phi_\rho(x)=\frac{h(\rho,x)}{h(\rho)},
 \qquad
 \Gamma_{d,\rho}(\xi)=\prod_{j=1}^d\phi_\rho(\xi_j),
 \qquad
 \boldsymbol\omega=(1/2,\ldots,1/2),
\]
and set
\[
 a_{d,n}=m_{d,n}(\boldsymbol\omega),\qquad |a_{d,n}|\le1.
\]
The exact coefficient representation has two distinguished saddle arcs, at
angles $0$ and $\pi$.  They give the first-order two-saddle model
\begin{equation}
 \mathsf G_{d,n}(\xi)
 =\Gamma_{d,\rho(\alpha)}(\xi)
 +a_{d,n}\Gamma_{d,\rho(\alpha)}(\xi-\boldsymbol\omega).
 \label{eq:intro-model}
\end{equation}
The second branch is forced by the parity identity $k^2\equiv k\pmod2$.

The proof separates three finite ranges.  In the small range
$n\le\alpha_0d$, one has $\rho\simeq n/d$ and the local saddle scale is
$n^{-1/2}$.  In a fixed critical window $ad\le n\le bd$, the parameter
$\rho$ stays in a compact subset of $(0,1)$ and the scale is $d^{-1/2}$.
In the growing range, from a sufficiently large multiple of $d$ up to
$Ad^2$, writing $\rho=e^{-\tau}$ gives $\tau\simeq d/n$ and the scale is
$\tau d^{-1/2}$.  The remaining radii $t\ge C_Ld$ are covered by the known
large-radius theorem.

At first order we prove, for suitable fixed constants,
\begin{align}
 \sup_\xi|m_{d,n}(\xi)-\mathsf G_{d,n}(\xi)|
 &\le C(n^{-1}+e^{-cn}),
 &&1\le n\le\alpha_0d, \label{eq:intro-small}\\
 &\le C_{a,b}d^{-1},
 &&ad\le n\le bd, \label{eq:intro-critical}\\
 &\le C_A d^{-1},
 &&\alpha_1(A)d\le n\le Ad^2. \label{eq:intro-growing}
\end{align}
These estimates already give the full-radius $\ell^2$ theorem.  If
$R_{d,n}$ denotes the multiplier operator with symbol
$m_{d,n}-\mathsf G_{d,n}$, then Plancherel and
\[
 \left\|\sup_n|R_{d,n}f|\right\|_2^2
 \le\sum_n\|R_{d,n}f\|_2^2
\]
reduce the residual maximal estimate to summation over the squared-radius
levels.
The small errors are square summable, the critical interval has $O(d)$
levels with error $O(d^{-1})$, and the growing interval has $O(d^2)$ levels
with the same error.  The operators associated with the model \eqref{eq:intro-model} are controlled by the
dimension-free maximal theorem for normalized discrete Gaussians of Mirek,
Szarek and Wr\'obel \cite{MSW-gaussian}.

The central analysis in the growing range is still a saddle calculation, but
one must also exclude all noncentral theta arcs.  Poisson summation gives
\begin{equation}
 h(e^{-w},x)=\sqrt{\frac\pi w}
   \sum_{m\in\mathbb Z}e^{-\pi^2(m+x)^2/w},
 \qquad \Re w>0,
 \label{eq:intro-poisson}
\end{equation}
and the key remote estimate is a uniform gap
\begin{equation}
 \sup_{x\in\mathbb R}
 \frac{|h(e^{-\tau+it},x)|}{h(e^{-\tau})}
 \le1-\eta,
 \quad
 0<\tau\le\tau_0,
 \quad
 \operatorname{dist}(t,\pi\mathbb Z)\ge C_0\tau.
 \label{eq:intro-remote-gap}
\end{equation}
Large rational denominators are controlled by Abel summation and the
quadratic Weyl-sum estimate \eqref{eq:weyl-sum} of Bombieri and Bourgain,
while bounded denominators are controlled by residue classes and quadratic
Gauss sums.  After taking the
$d$-fold product, the resulting factor $(1-\eta)^d$ absorbs all polynomial
losses in the range $n\le Ad^2$.

To reach $1<p<2$ we carry the same two-saddle expansion to arbitrary fixed
order.  For each fixed integer $K\ge1$ we construct a finite Gaussian model
in each regime, keeping the constructions separate where the ranges overlap.
Above fixed lower thresholds, the multiplier residual is
\[
 \lesssim_K n^{-K}
 \quad\text{in the small range},
 \qquad
 \lesssim_K d^{-K}
 \quad\text{in the critical and growing ranges}.
\]
The higher-order corrections are radial derivatives of
$\Gamma_{d,\rho}$.  A structural order bound shows that a term of asymptotic
order $j$ contains at most $2j$ such derivatives.  Finite differences at the
natural saddle spacing can therefore replace all derivative corrections by
bounded combinations of genuine normalized Gaussians without degrading the
order of the remainder.  The Gaussian maximal theorem then controls the
model operators.

At each fixed squared-radius level, the residual operator has a uniform
$\ell^1$ bound, while its $\ell^2$ norm is bounded by
$C_Kn^{-K}$ in the small range and by $C_Kd^{-K}$ in the other two finite
ranges.  Riesz--Thorin gives the corresponding $\ell^p$ decay with exponent
$2K(p-1)/p$.  After taking the $p$th power, the longest finite range contains
only $O(d^2)$ squared-radius levels.  Hence $K(p-1)\ge1$ suffices, and
$K=\lceil(p-1)^{-1}\rceil$ yields every fixed $1<p<2$.

\paragraph{Organization.}
\Cref{sec:setup,sec:representation} give the squared-radius reduction, the
coefficient and saddle representations, and the external Gaussian and
large-radius inputs.  The three central regimes are treated in
\cref{sec:scales}.  \Cref{sec:arithmetic-remote} controls the growing
remote arcs and completes the proof of the $\ell^2$ estimate.  The
arbitrary-order expansion and the final $\ell^p$ assembly are in
\cref{sec:higher-order,sec:assembly}.
\section{Squared-radius balls and Fourier notation}
\label{sec:setup}

Throughout the paper, $d\in\N$ and $\Z^d$ is equipped with counting
measure.  The Euclidean norm is denoted by $\abs{\cdot}$, and
\[
 B_2^d=\{x\in\R^d:\abs{x}\le1\}.
\]
For $t\ge0$ set
\[
 \Ball_{d,t}=tB_2^d\cap\Z^d,
 \qquad
 \Avg_t f(x)=\frac1{\abs{\Ball_{d,t}}}
   \sum_{y\in\Ball_{d,t}}f(x-y).
\]
We write $\T^d=(\R/\Z)^d$ and use the character
$e(u)=\exp(2\pi i u)$.  Our Fourier transform convention is
\begin{equation}
 \widehat f(\xi)=\sum_{x\in\Z^d} f(x)e(x\cdot\xi),
 \qquad
 f(x)=\int_{\T^d}\widehat f(\xi)e(-x\cdot\xi)\,d\xi,
 \label{eq:fourier-convention}
\end{equation}
so that Plancherel's identity reads
$\|f\|_{\ell^2(\Z^d)}=\|\widehat f\|_{L^2(\T^d)}$.

\begin{definition}[integer squared-radius ball]
\label{def:squared-radius-ball}
For $n\in\N_0$, define
\[
  B_{d,n}=\{y\in\Z^d:\abs{y}^2\leq n\},
  \qquad
  M_{d,n}f(x)=\frac{1}{\abs{B_{d,n}}}
    \sum_{y\in B_{d,n}}f(x-y).
\]
With the convention \eqref{eq:fourier-convention}, the Fourier multiplier of
$M_{d,n}$ is
\[
  m_{d,n}(\xi)=\frac{1}{\abs{B_{d,n}}}
    \sum_{y\in B_{d,n}}e(y\cdot\xi),
  \qquad \xi\in\T^d.
\]
\end{definition}

Since $0\in B_{d,n}$, every $B_{d,n}$ is a nonempty finite set.  More
explicitly,
\[
 M_{d,n}f=f*K_{d,n},\qquad
 K_{d,n}(y)=\frac{\mathbf 1_{B_{d,n}}(y)}{|B_{d,n}|},\qquad
 K_{d,n}\ge0,\quad \sum_{y\in\mathbb Z^d}K_{d,n}(y)=1.
\]
Thus $M_{d,n}$ is convolution with a probability kernel, and Young's
inequality gives
\begin{equation}
 \|M_{d,n}f\|_{\ell^p}\le \|f\|_{\ell^p},
 \qquad 1\le p\le\infty.
 \label{eq:ball-contraction}
\end{equation}
Whenever an asymptotic argument below begins at a fixed level $n\ge n_0$,
the integer $n_0$ is chosen after the fixed regime parameters (and, in
Section~\ref{sec:higher-order}, after the expansion order) have been fixed;
it is independent of $d$ and $n$.  The finitely many levels $n<n_0$ are
handled directly by \eqref{eq:ball-contraction}.

\begin{lemma}[exact real-to-integer reduction]
\label{lem:integer-radius-reduction}
For every $t\geq 0$,
\[
  tB_2^d\cap\Z^d=B_{d,\lfloor t^2\rfloor}.
\]
Consequently,
\[
  \sup_{t\geq0}\abs{\Avg_t f(x)}
  =\sup_{n\in\N_0}\abs{M_{d,n}f(x)}
\]
for every $x\in\Z^d$.
\end{lemma}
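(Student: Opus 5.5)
The plan is to argue pointwise on lattice points, using that $|y|^2$ is a nonnegative integer for $y\in\Z^d$. Fix $t\ge0$ and $y\in\Z^d$. By definition $y\in tB_2^d$ means $|y|\le t$, i.e.\ $|y|^2\le t^2$, since both sides are nonnegative and squaring is monotone on $[0,\infty)$. Because $|y|^2=\sum_j y_j^2\in\N_0$, for any integer $k$ we have $k\le t^2$ if and only if $k\le\lfloor t^2\rfloor$. This is just the defining property of the floor function: $\lfloor t^2\rfloor$ is the largest integer not exceeding $t^2$. Applying this with $k=|y|^2$ gives $y\in tB_2^d$ exactly when $|y|^2\le\lfloor t^2\rfloor$, which is the statement $y\in B_{d,\lfloor t^2\rfloor}$. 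This proves the set identity.

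For the consequence, the set identity gives $\Ball_{d,t}=B_{d,\lfloor t^2\rfloor}$, so the normalizing cardinalities and the summation ranges agree. Hence $\Avg_t f=M_{d,\lfloor t^2\rfloor}f$ identically. It follows that
\[
 \{\Avg_t f(x):t\ge0\}=\{M_{d,n}f(x):n\in\N_0\}
\]
as subsets of $\mathbb C$, for each $x$. The inclusion ``$\subseteq$'' is immediate, since $\lfloor t^2\rfloor\in\N_0$. For ``$\supseteq$'', given $n\in\N_0$ take $t=\sqrt n$, so that $\lfloor t^2\rfloor=n$. Equal sets of values have equal suprema of absolute values, which gives the stated identity. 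The identity holds in $[0,\infty]$; in fact both sides are finite by \eqref{eq:ball-contraction} when $f\in\ell^\infty$, though finiteness is not needed here.

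There is no genuine obstacle. The only point requiring care is the surjectivity of $t\mapsto\lfloor t^2\rfloor$ onto $\N_0$. Without it we would obtain only an inequality between the two suprema rather than the equality.
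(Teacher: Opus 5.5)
Your proof is correct and follows the paper's argument: the integrality of $|y|^2$ together with the defining property of the floor function gives the set identity, and the choice $t=\sqrt n$ shows that every level $n\in\N_0$ occurs. Hence the two families of averages coincide and have the same supremum.
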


\begin{proof}
For $y\in\Z^d$, the condition $y\in tB_2^d$ means $\abs{y}^2\leq t^2$, and
$\abs{y}^2$ is an integer; hence the condition is equivalent to
$\abs{y}^2\leq\lfloor t^2\rfloor$.  Conversely, $B_{d,n}$ is attained at
$t=\sqrt n$, so every integer squared-radius level occurs in the real-radius
family, and the averages agree under this identification.
\end{proof}

\section{Coefficient extraction and the two-saddle representation}
\label{sec:representation}

\subsection{Exact coefficients and parity}

For $|z|<1$ and $x\in\mathbb T$, put
\[
 h(z,x)=\sum_{k\in\mathbb Z}z^{k^2}e(kx),
 \qquad h(z)=h(z,0),
 \qquad e(u)=e^{2\pi i u},
\]
and
\[
 A_+(z)=\frac1{1-z},\qquad A_-(z)=\frac1{1+z}.
\]
All series converge absolutely on compact subsets of the unit disk, so
products and coefficient extraction can be performed term by term.

\Needspace{5\baselineskip}
\begin{proposition}[coefficient identities]
\label{prop:coefficient}
For $d\ge1$, $n\ge0$, and $\xi\in\mathbb T^d$,
\begin{align}
 |B_{d,n}|&=[z^n]A_+(z)h(z)^d, \label{eq:card-coeff}\\
 m_{d,n}(\xi)&=
 \frac{[z^n]A_+(z)\prod_{j=1}^d h(z,\xi_j)}
 {[z^n]A_+(z)h(z)^d}. \label{eq:mult-coeff}
\end{align}
Let $\boldsymbol\omega=(1/2,\ldots,1/2)$ and $a_{d,n}=m_{d,n}(\boldsymbol\omega)$.  Then
\begin{equation}
 a_{d,n}=(-1)^n
 \frac{[z^n]A_-(z)h(z)^d}{[z^n]A_+(z)h(z)^d},
 \qquad |a_{d,n}|\le1.
 \label{eq:parity-coeff}
\end{equation}
\end{proposition}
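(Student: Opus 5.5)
The plan is to expand the $d$-fold products of theta series termwise as generating functions over lattice points, and then use $A_\pm$ to convert exact-norm counts into cumulative counts.

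\textbf{Cardinality and multiplier.} For $|z|<1$ and $\xi\in\mathbb T^d$, multiplying the absolutely convergent series gives
\[
 \prod_{j=1}^d h(z,\xi_j)=\sum_{y\in\mathbb Z^d} z^{|y|^2}e(y\cdot\xi)
 =\sum_{k\ge0} z^k \sum_{|y|^2=k} e(y\cdot\xi).
\]
Each inner sum is finite, since the sphere $\{|y|^2=k\}$ is finite. Multiplying by $A_+(z)=\sum_{l\ge0}z^l$ forms partial sums of the coefficients, so
\[
 [z^n]A_+(z)\prod_j h(z,\xi_j)=\sum_{|y|^2\le n}e(y\cdot\xi).
\]
Setting $\xi=0$ gives \eqref{eq:card-coeff}. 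Dividing the general case by the $\xi=0$ case gives \eqref{eq:mult-coeff}, and the denominator is nonzero because it equals $|B_{d,n}|\ge1$.

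\textbf{Parity identity.} Take $\xi=\boldsymbol\omega$. Then $e(k/2)=(-1)^k=(-1)^{k^2}$, since $k^2\equiv k\pmod 2$. Hence $h(z,1/2)=h(-z)$, and the numerator becomes $[z^n]A_+(z)h(-z)^d$. Substitute $z\mapsto -z$ in the coefficient extraction: $[z^n]F(-z)=(-1)^n[z^n]F(z)$, and $A_+(-z)=A_-(z)$. This gives
\[
 [z^n]A_+(z)h(-z)^d=(-1)^n[z^n]A_-(z)h(z)^d,
\]
which yields \eqref{eq:parity-coeff}.

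\textbf{Bound on $a_{d,n}$.} We have $|a_{d,n}|\le1$ because $m_{d,n}(\xi)$ is an average of unimodular numbers $e(y\cdot\xi)$ over $B_{d,n}$, so the triangle inequality bounds it by $1$.

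None of these steps poses a real obstacle; the only point needing care is the justification of the termwise rearrangements. This follows from absolute convergence on $|z|<1$: the bound $\sum_y|z|^{|y|^2}=h(|z|)^d<\infty$ controls every rearrangement above.
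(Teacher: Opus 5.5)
Your proposal is correct and follows essentially the same route as the paper: termwise expansion of $A_+(z)\prod_j h(z,\xi_j)$ into $\sum_{|y|^2\le n}e(y\cdot\xi)$, then the parity relation $k^2\equiv k \pmod 2$, and finally the triangle inequality. The only cosmetic difference is in the parity step. You use the substitution $z\mapsto -z$ together with $A_+(-z)=A_-(z)$, whereas the paper expands $[z^n]A_-(z)h(z)^d=\sum_{|y|^2\le n}(-1)^{n-|y|^2}$ and matches it directly with $\sum_{|y|^2\le n}(-1)^{|y|^2}$.
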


\begin{proof}
We first expand the product before extracting the coefficient:
\[
 A_+(z)\prod_{j=1}^d h(z,\xi_j)
 =\left(\sum_{r\ge0}z^r\right)
   \prod_{j=1}^d\left(\sum_{y_j\in\mathbb Z}
     z^{y_j^2}e(y_j\xi_j)\right)
 =\sum_{r\ge0}\sum_{y\in\mathbb Z^d}
   z^{r+|y|^2}e(y\cdot\xi).
\]
For fixed $y$, the equation $r+|y|^2=n$ has the unique admissible solution
$r=n-|y|^2$ when $|y|^2\le n$, and no solution otherwise.  Hence
\[
 [z^n]A_+(z)\prod_{j=1}^d h(z,\xi_j)
 =\sum_{|y|^2\le n}e(y\cdot\xi).
\]
At $\xi=0$ this is $|B_{d,n}|$, proving \eqref{eq:card-coeff}; division by
this positive coefficient gives \eqref{eq:mult-coeff}.

For the parity identity, $k^2\equiv k\pmod2$ gives
\begin{equation}
 h(-z,x)=\sum_{k\in\mathbb Z}z^{k^2}(-1)^{k^2}e(kx)
 =\sum_{k\in\mathbb Z}z^{k^2}(-1)^ke(kx)
 =h(z,x+1/2).
 \label{eq:parity-theta}
\end{equation}
At $\boldsymbol\omega=(1/2,\ldots,1/2)$,
\[
 e(y\cdot\boldsymbol\omega)
 =(-1)^{y_1+\cdots+y_d}=(-1)^{|y|^2}.
\]
On the other hand, $A_-(z)=\sum_{r\ge0}(-1)^rz^r$, so
\[
 [z^n]A_-(z)h(z)^d
 =\sum_{|y|^2\le n}(-1)^{n-|y|^2}
 =(-1)^n\sum_{|y|^2\le n}(-1)^{|y|^2}.
\]
Combining the last two displays with \eqref{eq:mult-coeff} proves
\eqref{eq:parity-coeff}.  Finally,
\[
 |a_{d,n}|\le \frac1{|B_{d,n}|}\sum_{y\in B_{d,n}}
 |e(y\cdot\boldsymbol\omega)|=1.
\]
\end{proof}

The coefficient $a_{d,n}$ in \eqref{eq:parity-coeff} uses the full $A_-$
integral.  Its comparison with the negative central arc will follow from the
remote estimates.

\subsection{The saddle and the Gaussian model}

For $0<\rho<1$ define
\[
 \mu(\rho)=\rho\frac{h'(\rho)}{h(\rho)}.
\]
Under the probability weights $\rho^{k^2}/h(\rho)$, the quantity
$\mu(\rho)$ is the mean of $k^2$, and
\[
 v(\rho)=\rho\mu'(\rho)
\]
is its variance.  The distribution is nondegenerate for $0<\rho<1$, so
$v(\rho)>0$ and $\mu$ is strictly increasing.  Since
$\mu(\rho)\to0$ as $\rho\downarrow0$ and $\mu(\rho)\to\infty$ as
$\rho\uparrow1$, for every $\alpha>0$ there is a unique
$\rho(\alpha)\in(0,1)$ such that
\begin{equation}
 \mu(\rho(\alpha))=\alpha. \label{eq:saddle-map}
\end{equation}

Set
\[
 \phi_\rho(x)=\frac{h(\rho,x)}{h(\rho)},
 \qquad
 \Gamma_{d,\rho}(\xi)=\prod_{j=1}^d\phi_\rho(\xi_j).
\]
For $n>0$ and $\rho=\rho(n/d)$, define the two-saddle model and the residual
by
\begin{equation}
 \mathsf G_{d,n}(\xi)=
 \Gamma_{d,\rho}(\xi)+
 a_{d,n}\Gamma_{d,\rho}(\xi-\boldsymbol\omega),
 \qquad
 r_{d,n}=m_{d,n}-\mathsf G_{d,n}.
 \label{eq:two-saddle-model}
\end{equation}
Let $R_{d,n}$ denote the Fourier multiplier operator with symbol $r_{d,n}$:
\begin{equation}
 \widehat{R_{d,n}f}(\xi)=r_{d,n}(\xi)\widehat f(\xi).
 \label{eq:residual-operator}
\end{equation}

\subsection{Normalized Cauchy integrals}
\label{subsec:normalized-contours}

Fix $n>0$ and $\rho=\rho(n/d)$ as above.  It is useful to keep the
global coefficient integrals in a form which does not require division by
a theta factor.  For an arc $E\subset[-\pi,\pi]$, set
\begin{align}
 J_+(\xi;E)&=\frac1{2\pi}
 \int_E
 \frac{\prod_{j=1}^d h(\rho e^{it},\xi_j)}{h(\rho)^d}
 \frac{e^{-int}}{1-\rho e^{it}}\,dt,
 \label{eq:Jplus}\\
 J_-(\xi;E)&=\frac1{2\pi}
 \int_E
 \frac{\prod_{j=1}^d h(\rho e^{it},\xi_j)}{h(\rho)^d}
 \frac{e^{-int}}{1+\rho e^{it}}\,dt.
 \label{eq:Jminus}
\end{align}
These integrals are defined on the whole circle.  Cauchy's formula and
\eqref{eq:saddle-map} give
\begin{equation}
 [z^n]A_+(z)\prod_jh(z,\xi_j)
 =\rho^{-n}h(\rho)^d J_+(\xi;[-\pi,\pi]).
 \label{eq:cauchy-main}
\end{equation}
In particular,
\begin{equation}
 m_{d,n}(\xi)=
 \frac{J_+(\xi;[-\pi,\pi])}{J_+(0;[-\pi,\pi])}.
 \label{eq:full-ratio}
\end{equation}
The same computation with $A_-$ in place of $A_+$ gives
\begin{equation}
 [z^n]A_-(z)h(z)^d
 =\rho^{-n}h(\rho)^dJ_-(0;[-\pi,\pi]),
 \label{eq:Aminus-J}
\end{equation}
and hence
\begin{equation}
 a_{d,n}=(-1)^n
 \frac{J_-(0;[-\pi,\pi])}{J_+(0;[-\pi,\pi])}.
 \label{eq:parity-J}
\end{equation}

The saddle factorization will only be used on local zero-free arcs.  Let
$E$ be an arc around $0$ on which $h(\rho e^{it},0)\neq0$.  Choose the
branch of
\[
 L(s+it)=\log h(e^{s+it},0),\qquad s=\log\rho,
\]
which agrees with the real logarithm at $t=0$, and put
\[
 \Phi_\rho(t)=L(s+it)-L(s)-i\mu(\rho)t,
 \qquad
 P_\rho(t,\xi)=
 \prod_{j=1}^d
 \frac{h(\rho e^{it},\xi_j)}{h(\rho e^{it},0)}.
\]
Since $d\mu(\rho)=n$, one has on such an arc
\begin{equation}
 \frac{\prod_{j=1}^d h(\rho e^{it},\xi_j)}{h(\rho)^d}e^{-int}
 =e^{d\Phi_\rho(t)}P_\rho(t,\xi).
 \label{eq:local-saddle-factorization}
\end{equation}
Consequently, on a zero-free central arc $E$,
\begin{align}
 J_+(\xi;E)&=\frac1{2\pi}
 \int_E e^{d\Phi_\rho(t)}
 \frac{P_\rho(t,\xi)}{1-\rho e^{it}}\,dt,
 \label{eq:Jplus-local}\\
 J_-(\xi;E)&=\frac1{2\pi}
 \int_E e^{d\Phi_\rho(t)}
 \frac{P_\rho(t,\xi)}{1+\rho e^{it}}\,dt.
 \label{eq:Jminus-local}
\end{align}
The zero-free assertions needed for these local factorizations are proved
in the corresponding small, critical, and growing regimes below.  The
global identities \eqref{eq:cauchy-main}--\eqref{eq:parity-J} do not use
any zero-free statement.

If $E_0$ is a symmetric arc around $0$ and $E_\pi=\pi+E_0$ is understood
as an arc modulo $2\pi$, then \eqref{eq:parity-theta} and the change of
variables $t=\pi+u$ give
\begin{equation}
 \int_{E_\pi}
 \frac{\prod_jh(\rho e^{it},\xi_j)}{1-\rho e^{it}}e^{-int}\,dt
 =(-1)^n
 \int_{E_0}
 \frac{\prod_jh(\rho e^{iu},\xi_j+1/2)}{1+\rho e^{iu}}e^{-inu}\,du.
 \label{eq:negative-arc-transport}
\end{equation}
Since $\xi+\boldsymbol\omega=\xi-\boldsymbol\omega$ on $\mathbb T^d$, this
change of variables gives the translated branch in
\eqref{eq:two-saddle-model}.  Recovering its coefficient from the full $A_-$
integral also requires control of the remote arcs.

\subsection{Estimates from the literature}

\paragraph{Normalized discrete Gaussians.}
We use the following direct specialization of the maximal assertion in
\cite[Theorem~1.1, (1.5)]{MSW-gaussian}.

\begin{proposition}[normalized discrete-Gaussian maximal input]
\label{prop:gaussian-input}
For every $1<p<\infty$ there is $C_p<\infty$ such that, for every
$d\ge1$ and every $f\in\ell^p(\mathbb Z^d)$,
\begin{equation}
 \left\|\sup_{0<\rho<1}|G_{d,\rho}f|\right\|_p
 \le C_p\|f\|_p. \label{eq:known-gaussian}
\end{equation}
The constant $C_p$ is independent of $d$.  Here $G_{d,\rho}$ is
convolution with
\[
 k\longmapsto \frac{\rho^{|k|^2}}{h(\rho)^d}.
\]
\end{proposition}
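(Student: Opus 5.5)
The plan is to obtain \eqref{eq:known-gaussian} from \cite[Theorem~1.1, (1.5)]{MSW-gaussian} by a change of parameter; no new analysis is needed. The one point needing care is matching the two normalizations.

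First I will note that $G_{d,\rho}$ is convolution with a probability kernel. Indeed
\[
 \sum_{k\in\mathbb Z^d}\rho^{|k|^2}=\prod_{j=1}^d\sum_{k_j\in\mathbb Z}\rho^{k_j^2}=h(\rho)^d,
\]
so the kernel $\rho^{|k|^2}/h(\rho)^d$ is nonnegative with total mass one. In particular each single operator satisfies $\|G_{d,\rho}f\|_p\le\|f\|_p$, exactly as in \eqref{eq:ball-contraction}.

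Next I will identify this family with the normalized discrete Gaussians of \cite{MSW-gaussian}. Those are, up to the choice of parameter, the kernels $k\mapsto e^{-\pi|k|^2/t}/\Theta(t)^d$ with $t>0$, where $\Theta(t)=\sum_{m\in\mathbb Z}e^{-\pi m^2/t}$. The substitution $\rho=e^{-\pi/t}$ is a continuous increasing bijection from $(0,\infty)$ onto $(0,1)$. Under it, $e^{-\pi|k|^2/t}=\rho^{|k|^2}$ and $\Theta(t)=h(\rho)$. Hence for every $x$,
\[
 \sup_{0<\rho<1}|G_{d,\rho}f(x)|=\sup_{t>0}|P_tf(x)|,
\]
where $P_t$ is the operator of \cite{MSW-gaussian}. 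If that paper uses a different scaling of the exponent, such as $e^{-|k|^2/t}$ or $e^{-t|k|^2}$, the same argument applies after the corresponding monotone reparametrization.

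The maximal bound of \cite[Theorem~1.1, (1.5)]{MSW-gaussian} then gives \eqref{eq:known-gaussian} with a constant independent of $d$, for every $1<p<\infty$.

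Measurability of the supremum is harmless. For finitely supported $f$, the map $\rho\mapsto G_{d,\rho}f(x)$ is continuous, so the supremum may be taken over rational $\rho$. The general case then follows by the usual density argument together with monotone convergence.

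The only potential obstacle is bookkeeping: confirming that the cited theorem is stated for the full continuous range of the parameter, and with the same $d$-fold normalization, rather than for dyadic parameters or a differently normalized kernel. I expect the cited statement to cover the full range, since the supremum over all $t>0$ is the content of its maximal assertion (1.5).
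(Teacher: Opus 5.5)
Your proposal is correct and follows the paper's proof: both identify the kernel $\rho^{|k|^2}/h(\rho)^d$ with the normalized discrete Gaussian $e^{-\pi|k|^2/t}/\sum_{m\in\mathbb Z^d}e^{-\pi|m|^2/t}$ of \cite{MSW-gaussian} via $\rho=e^{-\pi/t}$ (the paper writes this as $\tau=-\log\rho$, $t=\pi/\tau$), and then invoke \cite[Theorem~1.1, (1.5)]{MSW-gaussian}. Your remarks on the contraction property and on measurability are harmless additions that the paper omits.
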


\begin{proof}
The kernel in \cite[Theorem~1.1]{MSW-gaussian} is
\[
 g_t(k)=\frac{e^{-\pi|k|^2/t}}
 {\sum_{m\in\mathbb Z^d}e^{-\pi|m|^2/t}},
 \qquad t>0.
\]
Set
\[
 \tau=-\log\rho,
 \qquad
 t=\frac{\pi}{\tau}.
\]
Then
\[
 e^{-\pi|k|^2/t}=e^{-\tau|k|^2}=\rho^{|k|^2},
 \qquad
 \sum_{m\in\mathbb Z^d}e^{-\pi|m|^2/t}=h(\rho)^d.
\]
Thus $g_t$ is exactly the kernel of $G_{d,\rho}$, and $t\in(0,\infty)$
corresponds bijectively to $\rho\in(0,1)$.  The maximal inequality
\cite[Theorem~1.1, (1.5)]{MSW-gaussian} gives
\eqref{eq:known-gaussian} for every $1<p<\infty$.
\end{proof}

\paragraph{The translated Gaussian branch.}
Define the parity modulation
\begin{equation}
 (U_{\boldsymbol\omega}f)(x)
 =(-1)^{x_1+\cdots+x_d}f(x).
 \label{eq:parity-modulation}
\end{equation}
The operator $U_{\boldsymbol\omega}$ is self-inverse and isometric on every
$\ell^p(\mathbb Z^d)$, $1\le p\le\infty$.  With the Fourier convention
\eqref{eq:fourier-convention},
\[
 \widehat{U_{\boldsymbol\omega}f}(\xi)
 =\widehat f(\xi-\boldsymbol\omega),
\]
so $U_{\boldsymbol\omega}G_{d,\rho}U_{\boldsymbol\omega}$ has multiplier
$\Gamma_{d,\rho}(\xi-\boldsymbol\omega)$.  Consequently
\begin{equation}
 \left\|\sup_{0<\rho<1}
 |U_{\boldsymbol\omega}G_{d,\rho}U_{\boldsymbol\omega}f|\right\|_p
 \le C_p\|f\|_p,
 \qquad 1<p<\infty.
 \label{eq:translated-gaussian-maximal}
\end{equation}
\paragraph{The large-radius ball theorem.}
Bourgain, Mirek, Stein and Wr\'obel \cite[Theorem~2]{BMSW-large} (see
also \cite[Theorem~2.1]{BMSW-dyadic}) proved that for some absolute
constant $C_L>0$ and every $1<p<\infty$ there is $C_p<\infty$ such that
\begin{equation}
 \left\|\sup_{t\ge C_Ld}|\Avg_tf|\right\|_p
 \le C_p\|f\|_p, \label{eq:known-large-radius}
\end{equation}
uniformly in $d$.

\paragraph{A finite theta estimate.}
We use the following bound for quadratic Weyl sums with a linear term; it
is uniform in the linear coefficient and in the length of the sum.  By
Bombieri and Bourgain \cite[Lemma~20]{BombieriBourgain-2009}, there is an
absolute constant $C_{\mathrm W}$ such that if $\alpha,\theta\in\mathbb R$,
$a,Q\in\mathbb Z$, $Q\ge1$, $(a,Q)=1$, and
\[
 \left|\alpha-\frac aQ\right|<3Q^{-2},
\]
then, for every integer $L\ge1$,
\begin{equation}
 \left|
 \sum_{k=1}^{L}
 e(\alpha k^2+\theta k)
 \right|
 \le C_{\mathrm W}\left(\frac L{\sqrt Q}+\sqrt Q\right).
 \label{eq:weyl-sum}
\end{equation}
In \cite{BombieriBourgain-2009} the hypothesis reads
$|\alpha-a/Q|<B'Q^{-2}$ with an arbitrary $B'\ge1$ and the bound is
$\ll\sqrt Q+B'L/\sqrt Q$; we have fixed $B'=3$.  For $B'=1$ and $Q\le4L$
the estimate is Theorem~6 of Fiedler, Jurkat and K\"orner
\cite[p.~143]{FJK}.  We apply \eqref{eq:weyl-sum} to sums of the form
$\sum_ke^{\pi i(k^2X+2k\theta)}=\sum_ke(\tfrac X2k^2+\theta k)$, so the
quadratic coefficient is $\alpha=X/2$ and, if $X$ is approximated by $p/q$,
the relevant denominator $Q$ is that of $p/(2q)$ in lowest terms; the
conversion is carried out in \cref{lem:weyl-abel}.  We use \eqref{eq:weyl-sum}
only in the large-denominator part of the growing remote estimate.

We shall also use Dirichlet's rational-approximation theorem in its standard
pigeonhole form.
\section{The three finite scales}
\label{sec:scales}

Fix $d\ge1$, $n\ge1$, set $\alpha=n/d$, and let
$\rho=\rho(\alpha)$ be given by \eqref{eq:saddle-map}.  We write
$\kappa_j(\rho)$ for the $j$th cumulant of $k^2$ under the probability
measure
\[
 \mathbb P_\rho(k)=\frac{\rho^{k^2}}{h(\rho)},\qquad k\in\mathbb Z.
\]
Thus $\kappa_1=\mu$ and $\kappa_2=v=\rho\mu'$.  With $s=\log\rho$, let
$L(z)=\log h(e^z,0)$ denote the analytic branch near $s$.  Then
\begin{equation}
 L(s+z)-L(s)
 =\log\sum_{k\in\mathbb Z}\mathbb P_\rho(k)e^{zk^2},
 \label{eq:cumulant-generating-identity}
\end{equation}
and hence, for every $j\ge1$,
\begin{equation}
 L^{(j)}(s)=\kappa_j(\rho),\qquad
 \left.\partial_t^jL(s+it)\right|_{t=0}=i^j\kappa_j(\rho).
 \label{eq:cumulant-derivative-identity}
\end{equation}
The natural normalization of the cumulants and the local saddle variable
differ across the three ranges as follows:
\[
\begin{array}{c|c|c|c}
\text{range} & \text{saddle parameter} & \text{local variable} & N \\ \hline
n\ll d & \rho\simeq n/d & t=u/\sqrt n & n \\
 n\simeq d & \rho\asymp1 & t=u/\sqrt d & d \\
 n/d\gg1 & \rho=e^{-\tau},\ \tau\simeq d/n
            & t=\tau u/\sqrt d & d
\end{array}
\]

\subsection{A symmetric saddle comparison}

The same elementary cancellation is used at all three scales.  We isolate it
once so that the $N^{-1}$ gain in the central ratios is explicit.

\Needspace{8\baselineskip}
\begin{lemma}[symmetric saddle comparison]
\label{lem:symmetric-saddle}
Fix positive constants $q_0,q_1,c_0,C_0$ and $0<\eta\le1$.  There are
$N_0$ and $C$ with the following property.  Let $N\ge N_0$,
$U=\eta\sqrt N$, let $\Psi$ and $A$ be $C^2$ functions on $[-U,U]$, and
for every $\xi$ let $P(\,\cdot\,,\xi)$ be $C^2$ on $[-U,U]$.  Let $G(\xi)$
be a function of $\xi$ and let $E(\xi)\ge0$.  Suppose that, uniformly in
$\xi$,
\begin{align}
 \Re\Psi(u)&\le-c_0u^2, \label{eq:symmetric-phase-decay}\\
 \Psi(u)&=-\frac q2u^2+\frac{i b_3}{\sqrt N}u^3
          +\frac{b_4}{N}u^4+R(u), \label{eq:symmetric-phase-expansion}\\
 q_0\le q&\le q_1,\qquad |b_3|+|b_4|\le C_0,
 \qquad |R(u)|\le C_0\frac{|u|^5}{N^{3/2}}, \notag
\end{align}
where $q,b_3,b_4$ are real.  Assume also
\begin{align}
 A(0)&=1,\qquad |A(u)|\le C_0, \notag\\
 |A'(u)|&\le C_0N^{-1/2},\qquad
 |A''(u)|\le C_0N^{-1},
 \label{eq:symmetric-pole-hyp}\\
 P(0,\xi)&=G(\xi),\qquad P(u,0)=1, \notag\\
 |P(u,\xi)|&\le e^{-c_0E(\xi)}, \notag\\
 |\partial_uP(u,\xi)|&\le C_0N^{-1/2}(1+E(\xi))e^{-c_0E(\xi)}, \notag\\
 |\partial_u^2P(u,\xi)|&\le C_0N^{-1}(1+E(\xi))^2e^{-c_0E(\xi)}.
 \label{eq:symmetric-product-hyp}
\end{align}
Define
\[
 I(\xi)=\int_{-U}^Ue^{\Psi(u)}A(u)P(u,\xi)\,du.
\]
Then
\begin{align}
 I(0)&=\sqrt{\frac{2\pi}{q}}+O(N^{-1}),
 \label{eq:symmetric-base}\\
 |I(\xi)-G(\xi)I(0)|&\le CN^{-1}e^{-cE(\xi)}
 \label{eq:symmetric-ratio-difference}
\end{align}
for a constant $c>0$ depending only on the displayed constants.  In
particular,
\begin{equation}
 \left|\frac{I(\xi)}{I(0)}-G(\xi)\right|\le CN^{-1}.
 \label{eq:symmetric-ratio}
\end{equation}
\end{lemma}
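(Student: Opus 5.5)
The proof expands the integrand to second order around the Gaussian $e^{-qu^2/2}$ and uses parity to kill the odd terms, which would otherwise only give $N^{-1/2}$. Write $\Psi(u)=-\frac q2u^2+\psi(u)$ with $\psi(u)=\frac{ib_3}{\sqrt N}u^3+\frac{b_4}{N}u^4+R(u)$.

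\textbf{Tails and the base integral.} Split $[-U,U]$ into the core $|u|\le N^{1/10}$ and the tail. On the tail, \eqref{eq:symmetric-phase-decay} together with $|A|\le C_0$ and $|P|\le e^{-c_0E}$ bound the contribution by $C e^{-c_0N^{1/5}}e^{-c_0E(\xi)}$, which is negligible. On the core $|\psi(u)|\lesssim N^{-1/5}$ is small, so Taylor's formula gives
\[
e^{\psi}=1+\frac{ib_3}{\sqrt N}u^3+O\Big(\frac{u^4+u^6}{N}\Big),
\]
where the cross term uses $|R|\le C_0|u|^5N^{-3/2}$. Likewise $A(u)=1+A'(0)u+O(u^2/N)$ with $|A'(0)|\le C_0N^{-1/2}$. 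For $\xi=0$ we have $P\equiv1$. Multiplying out, the terms of order $N^{-1/2}$ are $\frac{ib_3}{\sqrt N}u^3$ and $A'(0)u$. Both are odd, so they integrate to zero against $e^{-qu^2/2}$ over the symmetric core. The remaining terms are $O(N^{-1}\mathrm{poly}(u))$, and integrating them gives \eqref{eq:symmetric-base}; the core truncation costs only exponentially small terms. The uniform bounds $q_0\le q\le q_1$ and $|b_3|+|b_4|\le C_0$ make all constants uniform.

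\textbf{The ratio difference.} Write
\[
I(\xi)-G(\xi)I(0)=\int e^{\Psi}A\,\big(P(u,\xi)-P(0,\xi)\big)\,du,
\]
using $P(0,\xi)=G(\xi)$. Taylor expand in $u$:
\[
P(u,\xi)-P(0,\xi)=\partial_uP(0,\xi)\,u+O\big(u^2N^{-1}(1+E)^2e^{-c_0E}\big)
\]
by \eqref{eq:symmetric-product-hyp}. The quadratic remainder integrates against $|e^{\Psi}A|\le C_0e^{-c_0u^2}$ to $O(N^{-1}(1+E)^2e^{-c_0E})$. Absorbing $(1+E)^2$ by reducing $c_0$ to some $c<c_0$ gives $O(N^{-1}e^{-cE})$.

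The linear term is $\partial_uP(0,\xi)\int e^{\Psi}A\,u\,du$, with prefactor of size $N^{-1/2}(1+E)e^{-c_0E}$. It therefore suffices to show $\int e^{\Psi}A\,u\,du=O(N^{-1/2})$. Expanding as before, the leading term $\int e^{-qu^2/2}u\,du$ vanishes by oddness. The next terms, $\frac{ib_3}{\sqrt N}u^4$ and $A'(0)u^2$, are each $O(N^{-1/2})$. Multiplying out gives the total $O(N^{-1}(1+E)e^{-c_0E})$, and \eqref{eq:symmetric-ratio-difference} follows, including the tail contributions from the first step.

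\textbf{The ratio bound.} \eqref{eq:symmetric-ratio} follows by dividing by $I(0)$, which is bounded below by $\frac12\sqrt{2\pi/q_1}$ once $N\ge N_0$.

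The main obstacle is bookkeeping rather than ideas. The $N^{-1}$ gain depends on both first-order effects, the cubic phase term and the $u$-derivative of $P$, being odd or paired with an odd moment. One must also check that the second-order Taylor remainders in $u$ for $A$ and $P$ are controlled on the whole core using only the sup bounds on second derivatives; this is exactly why hypotheses \eqref{eq:symmetric-pole-hyp}–\eqref{eq:symmetric-product-hyp} are stated with the $N^{-1/2}$ and $N^{-1}$ scalings.
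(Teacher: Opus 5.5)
Your proposal is correct and follows essentially the same route as the paper. Both arguments truncate to a core where the non-Gaussian part of the phase is small, Taylor-expand $e^{\Psi+qu^2/2}$, $A$ and $P$ to second order, and kill the $N^{-1/2}$ terms by oddness against the even Gaussian. The differences are cosmetic: you use a core radius $N^{1/10}$ instead of $M\sqrt{\log N}$, and you write the linear $P$-term as $\partial_uP(0,\xi)\int e^{\Psi}A\,u\,du$, a product of two $N^{-1/2}$ factors, where the paper expands $e^{\Theta}A\{P-G\}$ jointly.
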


\begin{proof}
Put
\[
 g(u)=e^{-qu^2/2},\qquad
 \Theta(u)=\Psi(u)+\frac q2u^2.
\]
Choose $M$ depending only on $c_0$ and let
$R_N=M\sqrt{\log N}$.  By \eqref{eq:symmetric-phase-decay},
\eqref{eq:symmetric-pole-hyp}, and \eqref{eq:symmetric-product-hyp},
\begin{equation}
 \int_{R_N<|u|\le U}|e^{\Psi(u)}A(u)P(u,\xi)|\,du
 \le CN^{-3}e^{-c_0E(\xi)}
 \label{eq:symmetric-tail}
\end{equation}
after increasing $M$; the same estimate holds with $P\equiv1$, and the
Gaussian tail of $g$ is $O(N^{-3})$.  Since
$|G(\xi)|=|P(0,\xi)|\le e^{-c_0E(\xi)}$, the same tail bound applies
to the difference $I(\xi)-G(\xi)I(0)$.

On $|u|\le R_N$, \eqref{eq:symmetric-phase-expansion} gives
\begin{equation}
 e^{\Theta(u)}
 =1+\frac{i b_3}{\sqrt N}u^3
 +O\!\left(\frac{u^4+u^6}{N}\right).
 \label{eq:symmetric-exp-expansion}
\end{equation}
Increase $N_0$, if necessary, so that for $N\ge N_0$,
\[
 M\sqrt{\log N}\le\eta\sqrt N,
 \qquad
 M\sqrt{\frac{\log N}{N}}\le1,
 \qquad
 C_0M^3\frac{(\log N)^{3/2}}{\sqrt N}\le\frac14.
\]
The first inequality guarantees $[-R_N,R_N]\subseteq[-U,U]$.
Then $|R(u)|\le C u^4/N$ on $|u|\le R_N$, and
$|\Theta(u)|\le1/2$ after one further increase of $N_0$.  Using
$|e^z-1-z|\le C|z|^2$ for $|z|\le1/2$ gives
\eqref{eq:symmetric-exp-expansion}; the linear contribution of $b_4u^4/N$
and the remainder are $O(u^4/N)$, while the square of the cubic term is
$O(u^6/N)$.  Taylor's formula and \eqref{eq:symmetric-pole-hyp} give
\begin{equation}
 A(u)=1+A'(0)u+O(u^2/N).
 \label{eq:symmetric-pole-taylor}
\end{equation}
Consequently
\begin{equation}
 e^{\Theta(u)}A(u)
 =1+A'(0)u+\frac{i b_3}{\sqrt N}u^3
 +O\!\left(\frac{1+|u|^6}{N}\right).
 \label{eq:symmetric-base-integrand}
\end{equation}
The two displayed first-order terms are odd in $u$.  Since $g$ is even,
their integrals over $[-R_N,R_N]$ vanish.  Thus
\[
 \int_{-R_N}^{R_N}e^{\Psi(u)}A(u)\,du
 =\int_{-R_N}^{R_N}g(u)\,du+O(N^{-1}),
\]
and \eqref{eq:symmetric-tail} proves \eqref{eq:symmetric-base}.

For the ratio, Taylor's formula and \eqref{eq:symmetric-product-hyp} give
\begin{align}
 P(u,\xi)-G(\xi)
 &=\partial_uP(0,\xi)u+\mathcal E_P(u,\xi),
 \label{eq:symmetric-product-taylor}\\
 |\mathcal E_P(u,\xi)|
 &\le \frac{C_0u^2}{N}(1+E(\xi))^2e^{-c_0E(\xi)}. \notag
\end{align}
Multiplying \eqref{eq:symmetric-exp-expansion},
\eqref{eq:symmetric-pole-taylor}, and
\eqref{eq:symmetric-product-taylor}, and using the bound for
$\partial_uP(0,\xi)$, yields
\begin{equation}
 e^{\Theta(u)}A(u)\{P(u,\xi)-G(\xi)\}
 =\partial_uP(0,\xi)u+\mathcal E(u,\xi),
 \label{eq:symmetric-difference-integrand}
\end{equation}
with
\begin{equation}
 |\mathcal E(u,\xi)|
 \le \frac C N(1+|u|^8)(1+E(\xi))^2e^{-c_0E(\xi)}.
 \label{eq:symmetric-difference-error}
\end{equation}
Again the first term in \eqref{eq:symmetric-difference-integrand} is odd.
Therefore
\begin{align*}
 &\left|\int_{-R_N}^{R_N}e^{\Psi(u)}A(u)
       \{P(u,\xi)-G(\xi)\}\,du\right|\\
 &\qquad\le \frac C N(1+E(\xi))^2e^{-c_0E(\xi)}
       \int_{\mathbb R}(1+|u|^8)e^{-q_0u^2/2}\,du
 \le \frac C N e^{-cE(\xi)}.
\end{align*}
Together with \eqref{eq:symmetric-tail}, this proves
\eqref{eq:symmetric-ratio-difference}.  Finally
\eqref{eq:symmetric-base} and $q\in[q_0,q_1]$ imply $|I(0)|\ge c>0$ for
$N\ge N_0$, and \eqref{eq:symmetric-ratio} follows.
\end{proof}

\subsection{Small \texorpdfstring{$\alpha$}{alpha}: the scale is \texorpdfstring{$n$}{n}}

The estimates in this subsection require only a fixed sufficiently small
upper bound for the saddle radius.  For convenience, choose
\[
 \rho_0=2^{-11},\qquad \alpha_0=\mu(\rho_0).
\]
Since $\mu$ is strictly increasing,
\[
 0<\alpha\le\alpha_0
 \quad\Longleftrightarrow\quad
 0<\rho(\alpha)\le\rho_0.
\]

\begin{lemma}[small-range saddle and theta estimates]
\label{lem:small-scale}
Let $0<\alpha\le\alpha_0$ and $\rho=\rho(\alpha)$.

\emph{(i) Saddle and cumulants.}  One has
\begin{equation}
 \frac{\alpha}{16}\le\rho\le\alpha,
 \qquad
 \frac{\alpha}{32}\le v(\rho)\le64\alpha.
 \label{eq:small-rho-var}
\end{equation}
Moreover, for $j=3,4,5$,
\begin{equation}
 |\kappa_j(\rho)|\le C_j\rho. \label{eq:small-cumulants}
\end{equation}
Consequently, with
\[
 \lambda_j(\alpha)=\frac{\kappa_j(\rho(\alpha))}{\alpha},
 \qquad j=2,3,4,5,
\]
one has
\begin{equation}
 \frac1{32}\le\lambda_2(\alpha)\le64,
 \qquad |\lambda_j(\alpha)|\le C_j',\quad j=3,4,5.
 \label{eq:small-normalized-cumulants}
\end{equation}
In particular $d\rho\simeq n$ and $dv(\rho)\simeq n$.

\emph{(ii) Theta quotient.}  If $0<\rho\le2^{-7}$ and
$|t|\le\pi/3$, then $|h(\rho e^{it},0)|\ge1/2$ and
\begin{equation}
 \left|\frac{h(\rho e^{it},x)}{h(\rho e^{it},0)}\right|
 \le \exp\!\left(-\frac{\rho}{32}\delta(x)\right),
 \label{eq:small-contraction}
\end{equation}
where $\delta(x)=1-\cos(2\pi x)$.

\emph{(iii) Phase contraction.}  There are absolute constants
$\eta_s,c_s>0$ such that, for $|t|\le\eta_s$,
\begin{equation}
 \Re\Phi_\rho(t)\le-c_s\rho t^2.
 \label{eq:small-phase-decay}
\end{equation}

\emph{(iv) Remote contraction.}  With the fixed angular width
\[
 \eta_*:=\frac{\pi}{6},
\]
there is an absolute constant $c_*>0$ such that
\begin{equation}
 \sup_{x\in\mathbb T}
 \frac{|h(\rho e^{it},x)|}{h(\rho)}
 \le1-c_*\rho
 \qquad\text{if }\operatorname{dist}(t,\pi\mathbb Z)\ge\eta_*.
 \label{eq:small-full-remote-contraction}
\end{equation}
The corresponding statements near $t=\pi$ follow from
\eqref{eq:parity-theta}.
\end{lemma}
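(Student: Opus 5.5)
The four parts are elementary consequences of the smallness of $\rho$. We have $h(\rho,x)=1+2\rho\cos(2\pi x)+\sum_{k\ge2}2\rho^{k^2}\cos(2\pi kx)$, and for $\rho\le2^{-11}$ the tail $\varepsilon(\rho):=2\sum_{k\ge2}\rho^{k^2}$ is at most $3\rho^4$. The plan is to treat the $k=0,\pm1$ terms exactly and show that the tail only perturbs the constants. Throughout, the monotonicity of $\mu$ lets us move between $\alpha\le\alpha_0$ and $\rho\le\rho_0$.

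\emph{(i).} Write $\mu(\rho)=\bigl(2\rho+\sum_{k\ge2}2k^2\rho^{k^2}\bigr)/h(\rho)$. Since $1\le h(\rho)\le1+2\rho+3\rho^4$, this gives $\rho\le\mu(\rho)\le2\rho(1+O(\rho^3))$, whence $\alpha/2\lesssim\rho\le\alpha$, which is stronger than the first claim. For the cumulants I first bound the moments: $\mathbb E_\rho[k^{2j}]=\bigl(2\rho+\sum_{k\ge2}2k^{2j}\rho^{k^2}\bigr)/h(\rho)$. Because $k^{2j}\rho^{k^2-1}$ is summable over $k\ge2$ uniformly for $\rho\le\rho_0$, each moment lies between $\rho$ and $C_j\rho$. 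The variance is $v=\mathbb E k^4-\mu^2\ge\rho/(1+3\rho)-4\rho^2(1+O(\rho^3))\ge\rho/2$, and trivially $v\le\mathbb E k^4\le C\rho$. These bounds give \eqref{eq:small-rho-var} with the stated constants after inserting $\rho\simeq\alpha$. Each cumulant $\kappa_j$ is a polynomial in the moments $\mathbb E k^{2i}$, $i\le j$, with no constant term, and every moment is $O(\rho)\le O(1)$; hence $|\kappa_j|\le C_j\rho$, which is \eqref{eq:small-cumulants}. Dividing by $\alpha\simeq\rho$ yields \eqref{eq:small-normalized-cumulants}.

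\emph{(ii).} Put $z=\rho e^{it}$. First, $|h(z,0)|\ge1-2\rho-\varepsilon\ge1/2$. Next write
\[
\frac{h(z,x)}{h(z,0)}=1-w,\qquad w=\frac{\sum_{k\ne0}z^{k^2}\bigl(1-\cos(2\pi kx)\bigr)}{h(z,0)}.
\]
The key observation is $1-\cos(2\pi kx)\le k^2\delta(x)$, so the $k=\pm1$ part of the numerator is exactly $2z\,\delta(x)$ and the remainder is at most $C\rho^4\delta(x)$. Using $\cos t\ge1/2$ for $|t|\le\pi/3$ and $h(z,0)=1+O(\rho)$, one gets $\Re w\ge\rho\delta(x)(1-C\rho)$ and $|w|\le 5\rho\delta(x)$. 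Then
\[
|1-w|^2=1-2\Re w+|w|^2\le1-\rho\delta(x)/8,
\]
because $\rho$ is tiny and $\delta\le2$. Finally $\sqrt{1-y}\le e^{-y/2}$ gives \eqref{eq:small-contraction}.

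\emph{(iii) and (iv).} Since $\Re\Phi_\rho(t)=\log\bigl(|h(\rho e^{it})|/h(\rho)\bigr)$, the linear term $i\mu t$ plays no role. I would use the monotonicity trick $(a+\varepsilon)/(b+\varepsilon)\le a/b$ for $a\le b$, which gives
\[
\frac{|h(\rho e^{it},x)|}{h(\rho)}\le\frac{|1+2\rho\cos(2\pi x)e^{it}|}{1+2\rho}.
\]
For $x=0$ the identity $|1+2\rho e^{it}|^2=(1+2\rho)^2-4\rho(1-\cos t)$ gives the ratio $\le1-c\rho(1-\cos t)\le1-c'\rho t^2$ for $|t|\le\pi$. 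Taking logarithms proves \eqref{eq:small-phase-decay}, with any $\eta_s\le\pi$.

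For (iv), put $c=\cos(2\pi x)\in[-1,1]$; the condition $\operatorname{dist}(t,\pi\mathbb Z)\ge\pi/6$ means $|\cos t|\le\cos(\pi/6)$. Then
\[
|1+2\rho ce^{it}|\le\bigl(1+4\rho\cos(\pi/6)+4\rho^2\bigr)^{1/2}\le1+2\rho\cos(\pi/6)+2\rho^2,
\]
and comparing with $1+2\rho$ leaves a gap of $2\rho(1-\cos(\pi/6))-2\rho^2\ge c_*\rho$. The statement near $t=\pi$ follows from \eqref{eq:parity-theta}.

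The main obstacle is the uniformity of (ii) in $x$ as $\delta(x)\to0$. The tail must be bounded by a multiple of $\delta(x)$, not merely by $O(\rho^4)$, and the bound $1-\cos(2\pi kx)\le k^2\delta(x)$ is exactly what achieves this. Everything else is bookkeeping of explicit constants for $\rho\le2^{-11}$.
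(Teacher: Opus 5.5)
\textbf{Parts (i) and (ii).} These are fine and follow the paper's route. You isolate the $|k|=1$ terms and control the tail. In (ii) you bound the tail by a multiple of $\delta(x)$ using $1-\cos(2\pi kx)\le k^2\delta(x)$. Your normalized quantity $|1-w|^2$ is just the paper's computation of $|h_0|^2-|h(z,x)|^2$ divided by $|h_0|^2$.

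\textbf{The gap: parts (iii) and (iv).} The reduction you use for these parts is wrong as written. The inequality $(a+\varepsilon)/(b+\varepsilon)\le a/b$ for $a\le b$ is backwards. Indeed $(a+\varepsilon)b-a(b+\varepsilon)=\varepsilon(b-a)\ge0$, so adding the same $\varepsilon$ to numerator and denominator moves the fraction toward $1$.

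Your displayed comparison with the two-term truncation is in fact false. Take $x=0$, $t=\pi/2$. Then $h(i\rho)=E+iO$ with $E=\sum_{k\ \mathrm{even}}\rho^{k^2}$ and $O=\sum_{k\ \mathrm{odd}}\rho^{k^2}$, so
\[
\frac{|h(i\rho)|^2}{h(\rho)^2}=1-\frac{2r}{(1+r)^2},\qquad r=\frac OE=2\rho-4\rho^5+\cdots<2\rho .
\]
Since $r\mapsto 2r/(1+r)^2$ is increasing on $(0,1)$, this ratio is strictly larger than the truncated value $1-4\rho/(1+2\rho)^2$. The reason is that the $k=\pm2$ terms are in phase with $k=0$ there. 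So, as written, neither (iii) nor (iv) is proved.

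\textbf{How to repair it.} Your two bounds $|h(\rho e^{it},x)|\le a+\varepsilon$ and $h(\rho)=b+\varepsilon$ legitimately give
\[
\frac{|h(\rho e^{it},x)|}{h(\rho)}\le\frac{a+\varepsilon}{b+\varepsilon}=1-\frac{b-a}{b+\varepsilon}\le1-\frac{b-a}{2},
\]
using $h(\rho)<2$. The gap $b-a$ therefore survives multiplicatively. Your own computations of it then finish both parts: $b-a=4\rho(1-\cos t)/(a+b)$ at $x=0$, and $b-a\ge2\rho(1-\cos\frac\pi6)-2\rho^2$ in (iv).

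\textbf{What not to do instead.} Do not repair the step by the additive bound $a/b+\varepsilon/b$. That suffices for (iv), where the gap has size $\asymp\rho$; the paper likewise just perturbs the squared moduli by $O(\rho^4)$ there. It fails for (iii), however. An additive error of size about $2\rho^4$ cannot be dominated by $c\rho t^2$ once $|t|\lesssim\rho^{3/2}$.

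The paper avoids this issue in (iii) with the exact identity
\[
h(\rho)^2-|h(\rho e^{it},0)|^2=\sum_{k,\ell\in\mathbb Z}\rho^{k^2+\ell^2}\{1-\cos((k^2-\ell^2)t)\}.
\]
All its summands are nonnegative. Keeping only the four pairs $(0,\pm1),(\pm1,0)$ gives a lower bound $4\rho(1-\cos t)$ with no error term at all. Either this identity or the corrected gap form $1-(b-a)/(b+\varepsilon)$ closes your argument.
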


\begin{proof}
We first prove (i).  For $0<\rho\le2^{-7}$, separating the terms with
$|k|=1$ gives
\[
 2\rho\le\sum_{k\in\mathbb Z}k^2\rho^{k^2}
 \le2\rho+2\sum_{k\ge2}k^2\rho^{k^2}
 \le2\rho+30\rho^4,
\]
and
\[
 1+2\rho\le h(\rho)\le1+2\rho+4\rho^4.
\]
Thus $\rho\le\mu(\rho)\le16\rho$.  To make the variance estimate
explicit, set
\[
 m_4(\rho)=\frac1{h(\rho)}\sum_{k\in\mathbb Z}k^4\rho^{k^2}.
\]
Separating the terms $|k|=1$ once more gives
\[
 2\rho\le\sum_{k\in\mathbb Z}k^4\rho^{k^2}
 \le2\rho+64\rho^4.
\]
Since $1\le h(\rho)\le4/3$ in the present range,
\[
 \frac32\rho\le m_4(\rho)\le3\rho.
\]
The preceding second-moment bound also gives
$\mu(\rho)\le2\rho+30\rho^4\le3\rho$, and therefore
$\mu(\rho)^2\le9\rho^2\le\rho/2$.  Hence
\[
 \rho\le v(\rho)=m_4(\rho)-\mu(\rho)^2\le3\rho.
\]
Together with $\rho\le\alpha=\mu(\rho)\le16\rho$, this stronger estimate
implies the variance bounds in \eqref{eq:small-rho-var}.  Each $\kappa_j$
with $j\le5$ is a fixed polynomial in the moments of order at most $10$.  The contribution of the
terms with $|k|=1$ is $O(\rho)$, and
\[
 \sum_{|k|\ge2}|k|^{10}\rho^{k^2}\le C\rho^4,
\]
which proves \eqref{eq:small-cumulants}.  Dividing by $\alpha\simeq\rho$
gives \eqref{eq:small-normalized-cumulants}.

We next prove (ii).  For $|z|\le1/8$,
\begin{equation}
 h(z,x)-h(z,0)=-2z\delta(x)+T(z,x),
 \qquad |T(z,x)|\le64|z|^4\delta(x).
 \label{eq:small-tail}
\end{equation}
Indeed, pairing $k$ with $-k$ gives
$h(z,x)-h(z,0)=-2\sum_{k\ge1}z^{k^2}\{1-\cos(2\pi kx)\}$, and
$1-\cos(2\pi kx)\le k^2\delta(x)$, while the tail with $k\ge2$ is
geometric.  On $|t|\le\pi/3$,
\[
 |h(\rho e^{it},0)|\ge1-2\rho-4\rho^4\ge\frac12,
 \qquad
 \Re\!\left(\rho e^{it}\overline{h(\rho e^{it},0)}\right)
 \ge\frac{3\rho}{8}.
\]
Put $z=\rho e^{it}$, $h_0=h(z,0)$, $\delta=\delta(x)$ and
$T=T(z,x)$.  From \eqref{eq:small-tail},
\begin{align*}
 |h_0|^2-|h_0-2z\delta+T|^2
 ={}&4\delta\Re(z\overline{h_0})-4\rho^2\delta^2
      -2\Re(T\overline{h_0})\\
 &+4\delta\Re(z\overline T)-|T|^2.
\end{align*}
The first term is at least $(3/2)\rho\delta$.  Since
$0\le\delta\le2$, $|h_0|\le h(\rho)\le2$, and
$|T|\le64\rho^4\delta$, the sum of the absolute values of the remaining
four terms is at most $(1/2)\rho\delta$ for $\rho\le2^{-7}$.  Thus
\[
 |h_0|^2-|h(z,x)|^2\ge\rho\delta.
\]
Using $|h_0|^2\le4$ gives
\[
 \left|\frac{h(\rho e^{it},x)}{h(\rho e^{it},0)}\right|^2
 \le1-\frac\rho4\delta(x).
\]
Taking square roots and weakening the constant yields
\eqref{eq:small-contraction}.

For (iii), write
\[
 h(\rho e^{it},0)=1+2\rho e^{it}+R_\rho(t),
 \qquad |R_\rho(t)|\le4\rho^4.
\]
Summing the absolutely convergent double series gives the exact identity
\[
 h(\rho)^2-|h(\rho e^{it},0)|^2
 =\sum_{k,\ell\in\mathbb Z}\rho^{k^2+\ell^2}
   \{1-\cos((k^2-\ell^2)t)\}.
\]
Every summand is nonnegative.  Keeping only the four pairs
$(k,\ell)=(0,\pm1),(\pm1,0)$ yields
\[
 h(\rho)^2-|h(\rho e^{it},0)|^2
 \ge4\rho(1-\cos t).
\]
Since $h(\rho)$ is bounded above by an absolute constant in the small range
and $1-\cos t\ge c_0t^2$ for $|t|\le\pi/3$,
\[
 \frac{|h(\rho e^{it},0)|^2}{h(\rho)^2}
 \le1-c\rho(1-\cos t)
 \le e^{-c'\rho t^2},\qquad |t|\le\pi/3.
\]
This is \eqref{eq:small-phase-decay} with $\eta_s=\pi/3$, because
$|e^{\Phi_\rho(t)}|=|h(\rho e^{it},0)|/h(\rho)$.

It remains to prove (iv).  Put $c_x=\cos(2\pi x)$ and fix $\eta_*=\pi/6$.
Write
\[
 h(\rho e^{it},x)=1+2\rho e^{it}c_x+R_x(t),
 \qquad h(\rho)=1+2\rho+R_0,
 \qquad |R_x(t)|+|R_0|\le8\rho^4.
\]
If $\operatorname{dist}(t,\pi\mathbb Z)\ge\eta_*$, then
$|\cos t|\le\cos\eta_*$.  Since $|c_x|\le1$,
\begin{align*}
 (1+2\rho)^2-|1+2\rho e^{it}c_x|^2
 &=4\rho(1-c_x\cos t)+4\rho^2(1-c_x^2)\\
 &\ge4\rho(1-\cos\eta_*).
\end{align*}
The two remainder terms change the two squared moduli by at most
$20\rho^4$.  For $0<\rho\le2^{-7}$,
$20\rho^4\le2\rho(1-\cos\eta_*)$.  Hence
\[
 h(\rho)^2-|h(\rho e^{it},x)|^2
 \ge2\rho(1-\cos\eta_*).
\]
Since $h(\rho)$ is uniformly bounded above, dividing by $h(\rho)^2$ and
using $\sqrt{1-y}\le1-y/2$ gives
\eqref{eq:small-full-remote-contraction} with an absolute constant $c_*>0$.
The statement near the second saddle follows from \eqref{eq:parity-theta}.
\end{proof}

Multiplying \eqref{eq:small-contraction} over the coordinates yields
\begin{equation}
 |P_\rho(t,\xi)|
 \le\exp\!\left(-\frac\rho{32}
 \sum_{j=1}^d\delta(\xi_j)\right).
 \label{eq:small-product-contract}
\end{equation}
From now on, in the small range we fix the central width
$\eta:=\eta_*=\pi/6$, which is at most $\eta_s$.  On the complement of the
two central arcs, \eqref{eq:small-full-remote-contraction} gives
\begin{equation}
 \frac{\prod_{j=1}^d|h(\rho e^{it},\xi_j)|}{h(\rho)^d}
 \le(1-c_*\rho)^d\le e^{-c_*\rho d}\le e^{-c n}.
 \label{eq:small-remote-product}
\end{equation}
The pole is uniformly bounded in this range, so every part of the contour
outside the two central arcs is $O(e^{-cn})$ in absolute value.

On the central arcs we use the scale $t=u/\sqrt n$.

\begin{lemma}[small central expansion]
\label{lem:small-central}
There exist $n_0,C,c>0$ such that for $n_0\le n\le\alpha_0d$ the central
arcs $E_0=\{|t|\le\eta\}$ and $E_\pi=\pi+E_0$, with the fixed width
$\eta=\pi/6$ chosen above, satisfy
\begin{align}
 \left|\frac{J_+(\xi;E_0)}{J_+(0;E_0)}
       -\Gamma_{d,\rho}(\xi)\right|&\le \frac Cn,
 \label{eq:small-central-plus}\\
 \left|\frac{J_-(\xi;E_0)}{J_-(0;E_0)}
       -\Gamma_{d,\rho}(\xi)\right|&\le \frac Cn.
 \label{eq:small-central-minus}
\end{align}
Moreover,
\begin{equation}
 |J_+(0;E_0)|\asymp n^{-1/2},
 \qquad
 |J_-(0;E_0)|\asymp n^{-1/2},
 \label{eq:small-base-scale}
\end{equation}
and
\begin{equation}
 |\Gamma_{d,\rho}(\boldsymbol\omega)|\le e^{-cn}.
 \label{eq:small-cross-saddle}
\end{equation}
\end{lemma}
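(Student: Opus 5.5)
The plan is to apply \cref{lem:symmetric-saddle} with $N=n$ after the substitution $t=u/\sqrt n$ on $E_0$, using the local factorization \eqref{eq:Jplus-local}--\eqref{eq:Jminus-local}. By \cref{lem:small-scale}(ii), $|h(\rho e^{it},0)|\ge1/2$ on $|t|\le\pi/3\supset E_0$, so the arc is zero-free and the branch $L$ is defined there. Then
\[
 J_\pm(\xi;E_0)=\frac{1}{2\pi\sqrt n}\int_{-U}^{U}e^{\Psi(u)}A_\pm(u)P(u,\xi)\,du,
 \qquad U=\eta\sqrt n,
\]
where
\[
 \Psi(u)=d\Phi_\rho(u/\sqrt n),\qquad
 A_\pm(u)=\frac{1\mp\rho}{1\mp\rho e^{iu/\sqrt n}},\qquad
 P(u,\xi)=P_\rho(u/\sqrt n,\xi),\qquad G(\xi)=\Gamma_{d,\rho}(\xi).
\]
The constant factors $(1\mp\rho)^{-1}$ cancel in the ratios. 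We have $P(0,\xi)=\Gamma_{d,\rho}(\xi)$ exactly and $P(u,0)=1$.

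\textbf{Phase hypotheses.} By \eqref{eq:cumulant-derivative-identity} and $d\mu=n$, the Taylor expansion is
\[
 \Psi(u)=-\tfrac{d\kappa_2}{2n}u^2-\tfrac{i d\kappa_3}{6n^{3/2}}u^3+\tfrac{d\kappa_4}{24n^2}u^4+R(u).
\]
Since $d\kappa_j=n\lambda_j(\alpha)$, this gives $q=\lambda_2\in[1/32,64]$, bounded $b_3,b_4$, and $|R|\le Cd\sup|L^{(5)}|\,|u|^5n^{-5/2}$. To get the remainder bound I need $|L^{(5)}(s+it)|\lesssim\rho$ uniformly on $|t|\le\eta$, not just at $t=0$. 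I would prove this from the expansion $h=1+2\rho e^{it}+O(\rho^4)$ with $|h|\ge1/2$: derivatives of $\log h$ in $t$ are $O(\rho)$. This gives $|R|\le C|u|^5n^{-3/2}$. The decay hypothesis \eqref{eq:symmetric-phase-decay} follows from \eqref{eq:small-phase-decay}: $\Re\Psi\le-c_sd\rho u^2/n\le-cu^2$, since $d\rho\simeq n$.

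\textbf{Pole hypotheses.} $A_\pm(0)=1$, $A_\pm$ is bounded because $\rho\le2^{-11}$, and each $u$-derivative carries a factor $n^{-1/2}$ (indeed $\rho n^{-1/2}$).

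\textbf{Product hypotheses.} Set $E(\xi)=\rho\sum_j\delta(\xi_j)$. The bound $|P|\le e^{-E/32}$ is \eqref{eq:small-product-contract}. For the derivatives, write $P=\prod_j\psi_j$ with $\psi_j=h(z,\xi_j)/h(z,0)$. From \eqref{eq:small-tail}, $\psi_j=1-2z\delta_j/h_0+O(\rho^4\delta_j)$, so $\partial_t\log\psi_j=O(\rho\delta_j)$ and $\partial_t^2\log\psi_j=O(\rho\delta_j)$, uniformly on the arc; this is Cauchy estimates in $z$ on a disc of radius $\sim\rho$. Hence
\[
 \partial_tP=P\sum_j\partial_t\log\psi_j=O(E)\,|P|,\qquad
 \partial_t^2P=O(E+E^2)\,|P|.
\]
Converting to $u$ gives the factors $n^{-1/2}$ and $n^{-1}$. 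The lemma therefore yields \eqref{eq:small-central-plus}--\eqref{eq:small-central-minus} for $n\ge n_0$, and \eqref{eq:symmetric-base} gives $I(0)\asymp1$, hence \eqref{eq:small-base-scale} after the $n^{-1/2}$ Jacobian.

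\textbf{Cross saddle.} For \eqref{eq:small-cross-saddle}, note $\phi_\rho(1/2)=h(-\rho)/h(\rho)=(1-2\rho+O(\rho^4))/(1+2\rho+O(\rho^4))\le1-c\rho$. Therefore $|\Gamma_{d,\rho}(\boldsymbol\omega)|\le e^{-c\rho d}\le e^{-cn}$.

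The main obstacle I expect is verifying the product-derivative hypotheses with the exact $(1+E)^k$ form uniformly on the whole arc $|t|\le\pi/6$ (rather than near $t=0$), together with the uniform fifth-derivative bound for $L$. Both reduce to controlling the small-$\rho$ expansion of $h(z,x)$ and its $z$-derivatives on $|z|=\rho$, for which I would use Cauchy estimates on the annulus $\rho/2\le|z|\le2\rho$.
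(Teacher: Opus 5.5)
Your proposal is correct and follows the paper's proof essentially step by step. The common steps are the substitution $t=u/\sqrt n$, the verification of the hypotheses of \cref{lem:symmetric-saddle} with $N=n$, $q=\lambda_2(\alpha)$, $b_3=-\lambda_3/6$, $b_4=\lambda_4/24$, $E(\xi)=\rho\sum_j\delta(\xi_j)$, the use of the base conclusion for \eqref{eq:small-base-scale}, and the $k=0,\pm1$ separation for $\phi_\rho(1/2)$. The only differences are cosmetic. The paper gets the uniform $O(\rho)$ and $O(\rho\delta(x))$ derivative bounds by termwise differentiation of the theta series and the quotient rule rather than by Cauchy estimates. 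Also, your displayed formula for $J_\pm(\xi;E_0)$ drops the constant $(1\mp\rho)^{-1}$; as you note, this cancels in the ratios and is harmless for \eqref{eq:small-base-scale}.
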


\begin{proof}
Let $s=\log\rho$ and put $t=u/\sqrt n$.  Taylor's theorem at $s$, the
saddle equation $\mu(\rho)=n/d$, and the definition of $\lambda_j$ give
\begin{align}
 d\Phi_\rho(u/\sqrt n)
 ={}&-\frac{\lambda_2(\alpha)}2u^2
 -\frac{i\lambda_3(\alpha)}{6\sqrt n}u^3
 +\frac{\lambda_4(\alpha)}{24n}u^4
 +E_{5,n}(u), \label{eq:small-phase-correct}\\
 |E_{5,n}(u)|&\le C\frac{|u|^5}{n^{3/2}}
 \qquad (|u|\le \eta\sqrt n).
 \label{eq:small-fifth}
\end{align}
To justify the remainder uniformly on the complex segment, note that for
$0<\rho\le2^{-7}$ and $|t|\le\pi/6$,
\[
 h(\rho e^{it},0)=1+O(\rho),
 \qquad
 \partial_s^j h(e^{s+it},0)
 =\sum_{k\in\mathbb Z}k^{2j}\rho^{k^2}e^{ik^2t}
 =O_j(\rho),\quad 1\le j\le5.
\]
Hence $h(\rho e^{it},0)$ is uniformly separated from zero and
\[
 \left|\partial_s^5\log h(e^{s+it},0)\right|\le C\rho.
\]
Since $d\rho\simeq n$, Taylor's formula with integral remainder gives
\eqref{eq:small-fifth}.

For the product amplitude, pairing $k$ and $-k$ gives, for $m=0,1,2$,
\begin{align*}
 &\partial_t^m\{h(\rho e^{it},x)-h(\rho e^{it},0)\}\\
 &\qquad=2i^m\sum_{k\ge1}k^{2m}\rho^{k^2}e^{ik^2t}
       \{\cos(2\pi kx)-1\},
\end{align*}
and therefore, using $1-\cos(2\pi kx)\le k^2\delta(x)$,
\[
 \left|\partial_t^m\{h(\rho e^{it},x)-h(\rho e^{it},0)\}\right|
 \le C_m\rho\delta(x),\qquad m=0,1,2.
\]
The same small-$\rho$ bounds give, uniformly in $x$ and $|t|\le\pi/6$,
\[
 |h(\rho e^{it},x)|\ge1-C\rho\ge\frac12,
 \qquad
 |h(\rho e^{it},0)|\ge\frac12.
\]
Two applications of the quotient rule consequently yield
\[
 \left|\partial_t^m
 \log\frac{h(\rho e^{it},x)}{h(\rho e^{it},0)}\right|
 \le C_m\rho\delta(x),\qquad m=1,2.
\]
Summing over the coordinates in
\[
 \log P_\rho(t,\xi)=
 \sum_{j=1}^d
 \log\frac{h(\rho e^{it},\xi_j)}{h(\rho e^{it},0)}
\]
and using \eqref{eq:small-product-contract}, the chain rule gives, after
the change of variables $t=u/\sqrt n$,
\begin{align}
 |P_\rho(u/\sqrt n,\xi)|
 &\le e^{-cE(\xi)}, \\
 |\partial_uP_\rho(u/\sqrt n,\xi)|
 &\le \frac C{\sqrt n}(1+E(\xi))e^{-cE(\xi)}, \\
 |\partial_u^2P_\rho(u/\sqrt n,\xi)|
 &\le \frac Cn(1+E(\xi))^2e^{-cE(\xi)},
 \label{eq:small-amplitude-derivatives}
\end{align}
where $E(\xi)=\rho\sum_j\delta(\xi_j)$.  Indeed, the first logarithmic
derivative is $O(E/\sqrt n)$, the second is $O(E/n)$, and
$P''=P\{(\log P)'^2+(\log P)''\}$.

Set
\[
 \Psi_n(u)=d\Phi_\rho(u/\sqrt n),
 \qquad
 A_{+,n}(u)=\frac{1-\rho}{1-\rho e^{iu/\sqrt n}},
 \qquad
 A_{-,n}(u)=\frac{1+\rho}{1+\rho e^{iu/\sqrt n}}.
\]
Direct differentiation, using $\rho\le2^{-11}$, gives
\begin{equation}
 |A_{\pm,n}(u)|\le C,
 \qquad
 |\partial_u^rA_{\pm,n}(u)|\le C_r n^{-r/2},
 \qquad r=1,2.
 \label{eq:small-pole-derivatives}
\end{equation}
Moreover, \eqref{eq:small-phase-decay} and $d\rho\simeq n$ imply
\begin{equation}
 \Re\Psi_n(u)\le-cu^2,
 \qquad |u|\le\eta\sqrt n.
 \label{eq:small-scaled-phase-decay}
\end{equation}
Thus \eqref{eq:small-phase-correct},
\eqref{eq:small-amplitude-derivatives},
\eqref{eq:small-pole-derivatives}, and
\eqref{eq:small-scaled-phase-decay} verify the hypotheses of
\cref{lem:symmetric-saddle} with
\[
 N=n,\qquad q=\lambda_2(\alpha),\qquad
 b_3=-\frac{\lambda_3(\alpha)}6,\qquad
 b_4=\frac{\lambda_4(\alpha)}{24},\qquad
 G(\xi)=\Gamma_{d,\rho}(\xi).
\]
Since
\[
 J_\pm(\xi;E_0)
 =\frac{1}{2\pi\sqrt n}\frac1{1\mp\rho}
   \int_{-\eta\sqrt n}^{\eta\sqrt n}
   e^{\Psi_n(u)}A_{\pm,n}(u)P_\rho(u/\sqrt n,\xi)\,du,
\]
where the lower sign means $1+\rho$, the ratio conclusion
\eqref{eq:symmetric-ratio} gives
\eqref{eq:small-central-plus}--\eqref{eq:small-central-minus}.
The base conclusion \eqref{eq:symmetric-base} gives
\begin{align}
 J_+(0;E_0)
 &=\frac1{2\pi\sqrt n}
 \left\{
 \frac1{1-\rho}
 \int_{\mathbb R}e^{-\lambda_2u^2/2}\,du+O(n^{-1})
 \right\}, \label{eq:small-base-plus-expansion}\\
 J_-(0;E_0)
 &=\frac1{2\pi\sqrt n}
 \left\{
 \frac1{1+\rho}
 \int_{\mathbb R}e^{-\lambda_2u^2/2}\,du+O(n^{-1})
 \right\}. \label{eq:small-base-minus-expansion}
\end{align}
Since $\lambda_2\in[1/32,64]$ and $\rho\le2^{-11}$, the leading
terms in braces are bounded below by an absolute positive constant.  After
enlarging $n_0$, the $O(n^{-1})$ remainders are at most half of this lower
bound.  Hence, separately for the two signs,
\[
 c n^{-1/2}\le |J_\pm(0;E_0)|\le C n^{-1/2},
\]
which proves \eqref{eq:small-base-scale}.  Finally,
\[
 \left|\phi_\rho(1/2)\right|
 \le1-c\rho
\]
by the $k=0,\pm1$ separation, and $d\rho\simeq n$ proves
\eqref{eq:small-cross-saddle}.
\end{proof}

\begin{proposition}[small two-saddle residual]
\label{prop:small-residual}
There exist $n_0,c,C>0$ such that for $n_0\le n\le\alpha_0d$,
\begin{equation}
 \sup_{\xi\in\mathbb T^d}|r_{d,n}(\xi)|
 \le \frac Cn+Ce^{-cn}.
 \label{eq:small-residual}
\end{equation}
\end{proposition}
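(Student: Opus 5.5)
We decompose the full Cauchy integral into the two central arcs and the remote part, and compare each piece with the two-saddle model \eqref{eq:two-saddle-model}. Write $F=[-\pi,\pi]\setminus(E_0\cup E_\pi)$ with $E_0=\{|t|\le\pi/6\}$ as in \cref{lem:small-central}. By \eqref{eq:small-remote-product} and the uniform boundedness of $(1\mp\rho e^{it})^{-1}$ for $\rho\le2^{-11}$, we have $|J_\pm(\xi;F)|\le Ce^{-cn}$ uniformly in $\xi$. By \eqref{eq:negative-arc-transport},
\[
 J_+(\xi;E_\pi)=(-1)^nJ_-(\xi+\boldsymbol\omega;E_0).
\]
Hence
\[
 J_+(\xi;[-\pi,\pi])=J_+(\xi;E_0)+(-1)^nJ_-(\xi-\boldsymbol\omega;E_0)+O(e^{-cn}).
\]
Applying \eqref{eq:small-central-plus}--\eqref{eq:small-central-minus} gives
\[
 J_+(\xi;[-\pi,\pi])=J_+(0;E_0)\Gamma_{d,\rho}(\xi)+(-1)^nJ_-(0;E_0)\Gamma_{d,\rho}(\xi-\boldsymbol\omega)+O(n^{-3/2})+O(e^{-cn}),
\]
where the $O(n^{-3/2})$ comes from $|J_\pm(0;E_0)|\lesssim n^{-1/2}$ in \eqref{eq:small-base-scale} multiplied by the $C/n$ ratio errors.

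Next we identify the denominator and the parity coefficient. Setting $\xi=0$ and using \eqref{eq:small-cross-saddle}, which gives $|\Gamma_{d,\rho}(\boldsymbol\omega)|\le e^{-cn}$, we get
\[
 J_+(0;[-\pi,\pi])=J_+(0;E_0)\bigl(1+O(n^{-1})+O(n^{1/2}e^{-cn})\bigr).
\]
This is legitimate because $|J_+(0;E_0)|\asymp n^{-1/2}$ is bounded below. For the parity coefficient, the same decomposition applied to the $A_-$ integral at $\xi=0$ gives
\[
 J_-(0;[-\pi,\pi])=J_-(0;E_0)+(-1)^nJ_+(\boldsymbol\omega;E_0)+O(e^{-cn}).
\]
Here the $E_\pi$-arc transport for $A_-$ is the analogue of \eqref{eq:negative-arc-transport} with the roles of the poles exchanged. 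Moreover $J_+(\boldsymbol\omega;E_0)=J_+(0;E_0)\bigl(\Gamma_{d,\rho}(\boldsymbol\omega)+O(n^{-1})\bigr)$, which is $O(n^{-3/2})$. So by \eqref{eq:parity-J},
\[
 a_{d,n}=(-1)^n\frac{J_-(0;E_0)}{J_+(0;E_0)}+O(n^{-1})+O(e^{-cn}).
\]

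Finally we form the ratio \eqref{eq:full-ratio}. Dividing the numerator expansion by $J_+(0;[-\pi,\pi])$, using $|\Gamma_{d,\rho}|\le1$ and $|J_-(0;E_0)/J_+(0;E_0)|\le C$ (both of order $n^{-1/2}$ by \eqref{eq:small-base-scale}), gives
\[
 m_{d,n}(\xi)=\Gamma_{d,\rho}(\xi)+(-1)^n\frac{J_-(0;E_0)}{J_+(0;E_0)}\Gamma_{d,\rho}(\xi-\boldsymbol\omega)+O(n^{-1})+O(n^{1/2}e^{-cn}).
\]
Replacing the coefficient by $a_{d,n}$ costs $O(n^{-1})+O(e^{-cn})$, again since $|\Gamma_{d,\rho}|\le1$. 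Absorbing $n^{1/2}$ into the exponential by shrinking $c$ yields \eqref{eq:small-residual}.

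The main point requiring care is the $O(e^{-cn})$ remote-arc and error terms. After dividing by $|J_+(0;E_0)|\asymp n^{-1/2}$ they become $n^{1/2}e^{-cn}$, which is fine. What must actually be checked is the parity-coefficient step: the cross term $J_+(\boldsymbol\omega;E_0)$ has to be genuinely small, and this needs \eqref{eq:small-cross-saddle} together with the central ratio bound at $\xi=\boldsymbol\omega$.
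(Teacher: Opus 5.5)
Your proof is correct and follows essentially the paper's argument: you split the contour into the two central arcs and the remote part, transport the $E_\pi$ arc via \eqref{eq:negative-arc-transport}, and normalize by $J_+(0;E_0)$ using \eqref{eq:small-base-scale}, with cross terms killed by \eqref{eq:small-cross-saddle}. The only difference is cosmetic: you recover $a_{d,n}$ from the $A_-$ formula \eqref{eq:parity-J}, whereas the paper evaluates the decomposition of $m_{d,n}$ at $\xi=\boldsymbol\omega$, and these are the same identity after the full-circle shift $t\mapsto t+\pi$.
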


\begin{proof}
We decompose the full $J_+$ contour into $E_0$, $E_\pi$, and the remaining
part.  By \eqref{eq:small-remote-product}, the remaining part is $O(e^{-cn})$
in absolute value.  Since \eqref{eq:small-base-scale} gives
$|J_+(0;E_0)|\asymp n^{-1/2}$,
\[
 \frac{e^{-cn}}{|J_+(0;E_0)|}
 \le C\sqrt n\,e^{-cn}
 \le C e^{-c'n}
\]
for a possibly smaller absolute $c'>0$.  Thus the remaining contour is
$O(e^{-c'n})$ relative to $J_+(0;E_0)$ in modulus.  By \eqref{eq:negative-arc-transport}, the
negative contribution at frequency zero is $(-1)^nJ_-(\boldsymbol\omega;E_0)$.
From \eqref{eq:small-central-minus}, \eqref{eq:small-base-scale}, and
\eqref{eq:small-cross-saddle},
\[
 |J_-(\boldsymbol\omega;E_0)|
 \le C(n^{-1}+e^{-cn})|J_+(0;E_0)|.
\]
Consequently the full denominator $D=J_+(0;[-\pi,\pi])$ satisfies
\[
 D=J_+(0;E_0)\{1+O(n^{-1}+e^{-cn})\},
\]
and therefore
$c_+:=J_+(0;E_0)/D=1+O(n^{-1}+e^{-cn})$ and $|c_+|\le2$.

Write $c_-=(-1)^nJ_-(0;E_0)/D$.  The base expansions
\eqref{eq:small-base-plus-expansion}--\eqref{eq:small-base-minus-expansion}
show $|c_-|\le2$ after increasing $n_0$.
Set
\[
 F_+(\xi)=\frac{J_+(\xi;E_0)}{J_+(0;E_0)},
 \qquad
 F_-(\xi)=\frac{J_-(\xi;E_0)}{J_-(0;E_0)}.
\]
The exact contour decomposition and
\eqref{eq:negative-arc-transport} give
\[
 m_{d,n}(\xi)=c_+F_+(\xi)+c_-F_-(\xi-\boldsymbol\omega)
 +E_{\rm rem}(\xi),
 \qquad
 \|E_{\rm rem}\|_{L^\infty}\le C(n^{-1}+e^{-cn}).
\]
Finally evaluate the exact contour decomposition at $\xi=\boldsymbol\omega$.  By
\eqref{eq:small-central-plus} and \eqref{eq:small-cross-saddle},
\[
 \frac{J_+(\boldsymbol\omega;E_0)}{J_+(0;E_0)}
 =O(n^{-1}+e^{-cn}).
\]
Since $F_-(0)=1$, $|c_+|\le2$, $m_{d,n}(\boldsymbol\omega)=a_{d,n}$, and the remote
contribution is $O(e^{-cn})$, it follows that
$|c_--a_{d,n}|\le C(n^{-1}+e^{-cn})$.

Using \eqref{eq:small-central-plus}--\eqref{eq:small-central-minus},
$|c_+-1|\lesssim n^{-1}+e^{-cn}$, and
$|c_--a_{d,n}|\lesssim n^{-1}+e^{-cn}$, the triangle inequality gives
\eqref{eq:small-residual}.
\end{proof}

\begin{corollary}[small maximal range]
\label{thm:small}
There is $C_S<\infty$, independent of $d$, such that
\[
 \left\|\sup_{0\le n\le\alpha_0d}|M_{d,n}f|\right\|_2
 \le C_S\|f\|_2.
\]
\end{corollary}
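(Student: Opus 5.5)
The plan is to split the maximal operator over $0\le n\le\alpha_0 d$ into three pieces: finitely many low levels, the two-saddle Gaussian model, and the residual. Fix $n_0$ as in \cref{prop:small-residual}. For the levels $0\le n<n_0$ we use the pointwise bound $\sup_{n<n_0}|M_{d,n}f|\le\sum_{n<n_0}|M_{d,n}f|$ together with \eqref{eq:ball-contraction}. This contributes at most $n_0\|f\|_2$, and $n_0$ is independent of $d$. It therefore remains to treat the levels $n_0\le n\le\alpha_0 d$, where we write
\[
 M_{d,n}=G_{d,\rho(n/d)}+a_{d,n}\,U_{\boldsymbol\omega}G_{d,\rho(n/d)}U_{\boldsymbol\omega}+R_{d,n}.
\]
This identity holds on the Fourier side by \eqref{eq:two-saddle-model}. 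The multiplier of $G_{d,\rho}$ is $\Gamma_{d,\rho}$, since $\widehat{G_{d,\rho}}$ factorizes into the normalized theta quotients $\phi_\rho(\xi_j)$. The multiplier of the conjugated operator is $\Gamma_{d,\rho}(\xi-\boldsymbol\omega)$, as noted before \eqref{eq:translated-gaussian-maximal}.

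For the model terms we use that $|a_{d,n}|\le1$ by \cref{prop:coefficient}. Hence, pointwise,
\[
 \sup_n|M_{d,n}f|\le\sup_{0<\rho<1}|G_{d,\rho}f|+\sup_{0<\rho<1}|U_{\boldsymbol\omega}G_{d,\rho}U_{\boldsymbol\omega}f|+\sup_n|R_{d,n}f|.
\]
The first two terms are bounded on $\ell^2$ uniformly in $d$ by \cref{prop:gaussian-input} and \eqref{eq:translated-gaussian-maximal} with $p=2$. Taking the supremum over all $\rho$ avoids any need to track the map $n\mapsto\rho(n/d)$.

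For the residual we bound the supremum by the square function, $\|\sup_n|R_{d,n}f|\|_2^2\le\sum_{n\ge n_0}\|R_{d,n}f\|_2^2$. By Plancherel, each summand is at most $\sup_\xi|r_{d,n}(\xi)|^2\|f\|_2^2$. \Cref{prop:small-residual} then gives
\[
 \sum_{n\ge n_0}\|R_{d,n}f\|_2^2\le C\sum_{n\ge1}\bigl(n^{-1}+e^{-cn}\bigr)^2\|f\|_2^2\le C'\|f\|_2^2,
\]
and $C'$ is independent of $d$ because the residual bound is.

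There is no real obstacle, since the hard estimate is already contained in \cref{prop:small-residual}. The only points needing care are bookkeeping. First, the range must be split into $n<n_0$ and $n\ge n_0$ exactly as the residual proposition requires. Second, one must check that the kernel $\rho^{|k|^2}/h(\rho)^d$ has Fourier multiplier $\Gamma_{d,\rho}$ under the convention \eqref{eq:fourier-convention}. This holds because the kernel is even, so the sign of the exponent is irrelevant. Collecting the three contributions gives $C_S=n_0+2C_2+\sqrt{C'}$.
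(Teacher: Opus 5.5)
Your proposal is correct and follows the paper's own proof. The model branches are handled by the centered and parity-conjugated Gaussian maximal bounds together with $|a_{d,n}|\le1$. The residual is handled by Plancherel, the square-function bound and the square-summable estimate of \cref{prop:small-residual}. The finitely many levels $n<n_0$ are handled by the trivial contraction.
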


\begin{proof}
The centered branch is controlled by \eqref{eq:known-gaussian}, and the translated
branch by \eqref{eq:translated-gaussian-maximal}; recall that $|a_{d,n}|\le1$.
For the residual, Plancherel and \eqref{eq:small-residual} give
\[
 \left\|\sup_{n_0\le n\le\alpha_0d}|R_{d,n}f|\right\|_2
 \le\left(\sum_{n\ge n_0}
  (C/n+Ce^{-cn})^2\right)^{1/2}\|f\|_2.
\]
The finitely many squared-radius levels with $n<n_0$ are treated by the trivial
contraction bound.
\end{proof}

\subsection{A fixed critical window}

Fix $0<a\le b<\infty$ and write
\[
 K_{a,b}=[\rho(a),\rho(b)]\Subset(0,1).
\]
Since $v(\rho)=\rho\mu'(\rho)>0$ and all cumulants are continuous in
$\rho$, compactness gives
\begin{equation}
 0<c_{a,b}\le v(\rho)\le C_{a,b}<\infty,
 \label{eq:critical-variance}
\end{equation}
and the cumulants through order five are uniformly bounded on $K_{a,b}$.

\begin{lemma}[critical strip estimates]
\label{lem:critical-strip}
There exist $\eta,c,C>0$ such that for $\rho\in K_{a,b}$,
$|t|\le\eta$, and $x\in\mathbb T$, the theta factor
$h(\rho e^{it},x)$ is nonzero and
\begin{align}
 \left|\frac{h(\rho e^{it},x)}{h(\rho e^{it},0)}\right|
 &\le e^{-c\delta(x)}, \label{eq:critical-contraction}\\
 \left|\partial_s^r
 \log\frac{h(e^{s+it},x)}{h(e^{s+it},0)}\right|
 &\le C\delta(x),\qquad r=1,2. \label{eq:critical-logder}
\end{align}
Writing $L(s+it)=\log h(e^{s+it},0)$ with the analytic branch fixed by the
real logarithm at $t=0$, one also has
\begin{equation}
 \sup_{\rho\in K_{a,b},\ |t|\le\eta}
 \left|\partial_s^5L(\log\rho+it)\right|\le C_{a,b}.
 \label{eq:critical-complex-fifth}
\end{equation}
After decreasing $\eta$ if necessary, one also has
\begin{equation}
 \Re\Phi_\rho(t)\le-c_{a,b}t^2,
 \qquad |t|\le\eta.
 \label{eq:critical-phase-decay}
\end{equation}
On the complement of the two arcs $|t|\le\eta$ and
$|t-\pi|\le\eta$,
\begin{equation}
 \left|\frac{\prod_jh(\rho e^{it},\xi_j)}{h(\rho)^d}\right|
 \le q^d
 \label{eq:critical-remote}
\end{equation}
for some $q=q(a,b,\eta)<1$.
\end{lemma}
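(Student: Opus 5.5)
The plan is to use compactness of $K_{a,b}$ together with continuity in $(\rho,t,x)$, with one real-variable inequality at $t=0$ for each assertion. Everything comes from strict inequalities at $t=0$ (or away from the two saddle angles), which persist on a small strip by uniform continuity.

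\emph{Nonvanishing and the contraction \eqref{eq:critical-contraction}.} At $t=0$, $h(\rho,x)=\sum_k\rho^{k^2}\cos(2\pi kx)\ge 1-2\sum_{k\ge1}\rho^{k^2}$. This need not be positive for $\rho$ near $1$, so I will instead use the Jacobi triple product, which gives $h(\rho,x)=\prod_{m\ge1}(1-\rho^{2m})(1+2\rho^{2m-1}\cos 2\pi x+\rho^{4m-2})>0$ for real $\rho\in(0,1)$. The map $(\rho,t,x)\mapsto h(\rho e^{it},x)$ is continuous on the compact set $K_{a,b}\times[-1,1]\times\mathbb T$, so positivity at $t=0$ gives a uniform lower bound $|h|\ge c_1>0$ for $|t|\le\eta$ once $\eta$ is small. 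For the contraction, I first treat $t=0$. By the product formula, $\log h(\rho,x)-\log h(\rho,0)=\sum_m\log\frac{1+2q_m\cos2\pi x+q_m^2}{(1+q_m)^2}$ with $q_m=\rho^{2m-1}$. The $m=1$ factor equals $1-\frac{2\rho(1-\cos 2\pi x)}{(1+\rho)^2}\le e^{-c\delta(x)}$, and the other factors are $\le1$. For $t\ne0$, I want to avoid an $\eta$-dependence that breaks near $\delta(x)=0$. To do this I write $h(\rho e^{it},x)/h(\rho e^{it},0)=1-\delta(x)\,g(\rho,t,x)$, where $g=\frac{2}{h(\rho e^{it},0)}\sum_{k\ge1}(\rho e^{it})^{k^2}\frac{1-\cos2\pi kx}{\delta(x)}$. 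Then $g$ is continuous in all variables, because $\frac{1-\cos 2\pi k x}{1-\cos2\pi x}$ is a trigonometric polynomial of degree $k-1$ bounded by $k^2$, and the series converges uniformly on $K_{a,b}$. The $t=0$ bound gives $\Re g\ge c$ when $t=0$. Continuity then yields $\Re g\ge c/2$ and $|g|\le C$ on the strip. This gives $|1-\delta g|^2\le1-c\delta+C\delta^2$, which handles small $\delta$. For $\delta$ bounded below, I use the compact continuity argument directly with the strict inequality at $t=0$. Combining the two regimes gives \eqref{eq:critical-contraction}.

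\emph{Log-derivatives and the fifth derivative.} The $s$-derivatives of $h(e^{s+it},x)-h(e^{s+it},0)$ equal $-2\sum_{k\ge1}k^{2r}\rho^{k^2}e^{ik^2t}(1-\cos2\pi kx)$. These are bounded by $C\delta(x)$, using $1-\cos2\pi kx\le k^2\delta$ and the convergence of $\sum k^{2r+2}\rho^{k^2}$ on $K_{a,b}$. Combined with the lower bound on $|h|$, the quotient rule gives \eqref{eq:critical-logder}. The bound \eqref{eq:critical-complex-fifth} follows the same way: $\partial_s^5L$ is a polynomial in $\partial_s^jh/h$, $j\le5$, and these are uniformly bounded on the strip.

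\emph{Phase decay.} From \eqref{eq:cumulant-derivative-identity}, $\Phi_\rho(t)=-\tfrac12v(\rho)t^2+O(t^3)$, with the $O$ uniform thanks to the bound on $\partial_t^3L$ (obtained as in the previous step). Together with \eqref{eq:critical-variance}, shrinking $\eta$ gives $\Re\Phi_\rho\le-\tfrac14c_{a,b}t^2$.

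\emph{Remote bound \eqref{eq:critical-remote}.} This is the main point. I need $\sup_x|h(\rho e^{it},x)|<h(\rho)$ uniformly for $\rho\in K_{a,b}$ and $\operatorname{dist}(t,\pi\mathbb Z)\ge\eta$. The triangle inequality gives $\le$. Equality would force $e^{i(k^2t+2\pi kx)}$ to be constant in $k$, because all weights $\rho^{k^2}$ are positive. Comparing $k=0,\pm1$ gives $e^{it}e^{\pm2\pi ix}=1$, so $e^{2it}=1$ and $t\in\pi\mathbb Z$. Hence on the compact set $K_{a,b}\times\{\operatorname{dist}(t,\pi\mathbb Z)\ge\eta\}\times\mathbb T$ the continuous ratio $|h(\rho e^{it},x)|/h(\rho)$ is $<1$ and attains a maximum $q<1$. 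Taking products over coordinates gives $q^d$. The step I expect to be delicate is the uniformity of the contraction near $\delta(x)=0$ on the strip; the factorization through $g$ above is how I intend to handle it.
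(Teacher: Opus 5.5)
Your proposal is correct. For \eqref{eq:critical-logder}, the fifth-derivative bound, the phase decay and the remote bound \eqref{eq:critical-remote}, it coincides with the paper's argument:
\begin{itemize}
\item the quotient rule with $1-\cos 2\pi kx\le k^2\delta(x)$;
\item Taylor expansion at $t=0$ with $\Phi_\rho''(0)=-v(\rho)$;
\item equality in the triangle inequality forcing $t\in\pi\mathbb Z$, followed by compactness.
\end{itemize}
The differences lie in how you obtain positivity and the contraction on the strip.

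\textbf{Positivity and the real-axis gap.} The paper gets $h(\rho,x)>0$ from Poisson summation rather than the Jacobi triple product. Its gap at $t=0$ is the one-line bound $h(\rho,0)-h(\rho,x)=2\sum_{k\ge1}\rho^{k^2}(1-\cos2\pi kx)\ge 2\rho\delta(x)$, so no product formula is needed there.

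\textbf{Leaving the real axis.} The paper proves \eqref{eq:critical-logder} first and then uses analyticity: $\partial_t\log Q=i\,\partial_s\log Q$ for $Q=h(e^{s+it},x)/h(e^{s+it},0)$. Integrating in $t$ gives $\log|Q(s+it,x)|\le -c_0\delta(x)+C|t|\delta(x)$ uniformly in $x$. Taking $C\eta\le c_0/2$ then finishes with $c=c_0/2$.

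Your route factors $Q=1-\delta(x)g$, where $g$ is continuous because $(1-\cos 2\pi kx)/\delta(x)$ is a Fej\'er-type trigonometric polynomial bounded by $k^2$. You then split into small and large $\delta(x)$. This is also sound: the small-$\delta$ threshold is fixed by $\sup|g|$ on a fixed strip, so shrinking $\eta$ afterwards for the large-$\delta$ region is compatible. It also does not depend on the log-derivative bounds.

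The paper's route is shorter and reuses \eqref{eq:critical-logder}. It avoids the case split and yields explicit $\eta$ and $c$, rather than constants extracted from uniform continuity.
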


\begin{proof}
Poisson summation on the real axis gives $h(\rho,x)>0$.  Compactness of
$K_{a,b}\times\mathbb T$ gives a uniform positive lower bound and, after
shrinking $\eta$, a zero-free complex strip.  Moreover
\[
 h(\rho,0)-h(\rho,x)
 =2\sum_{k\ge1}\rho^{k^2}(1-\cos2\pi kx)
 \ge2\rho\delta(x).
\]
Since $\rho\ge\rho(a)>0$ and $h(\rho,0)$ is uniformly bounded above,
there is $c_0=c_0(a,b)>0$ such that
\begin{equation}
 \log\frac{h(\rho,x)}{h(\rho,0)}
 \le\log(1-c_0\delta(x))\le-c_0\delta(x).
 \label{eq:critical-real-log-gap}
\end{equation}
Also, for $m=0,1,2$ and throughout the zero-free strip,
\begin{align*}
 &\left|\partial_s^m
 \{h(e^{s+it},x)-h(e^{s+it},0)\}\right|\\
 &\qquad\le
 2\sum_{k\ge1}k^{2m}e^{sk^2}(1-\cos2\pi kx)
 \le C_{m,a,b}\delta(x),
\end{align*}
where we used $1-\cos(2\pi kx)\le k^2\delta(x)$.  The theta factors and
their first two radial derivatives are uniformly bounded, while the theta
factors themselves are uniformly bounded away from zero.  Quotient
differentiation therefore gives \eqref{eq:critical-logder}.  The same
absolute-convergence argument gives, for $0\le j\le5$,
\[
 \sup_{\rho\in K_{a,b},\ |t|\le\eta}
 \left|\partial_s^j h(e^{s+it},0)\right|\le C_{j,a,b}.
\]
Since $h(e^{s+it},0)$ is uniformly separated from zero on the strip, five
applications of the quotient rule give \eqref{eq:critical-complex-fifth}.

For
\[
 Q(s+it,x)=\frac{h(e^{s+it},x)}{h(e^{s+it},0)},
\]
choose the analytic logarithm agreeing with the real logarithm at $t=0$.
Analyticity gives
$\partial_t\log Q=i\partial_s\log Q$.  Hence, by
\eqref{eq:critical-real-log-gap} and the case $r=1$ of
\eqref{eq:critical-logder},
\begin{align*}
 \log|Q(s+it,x)|
 &=\log Q(s,x)
   +\Re\int_0^t\partial_v\log Q(s+iv,x)\,dv\\
 &\le-c_0\delta(x)+C|t|\delta(x).
\end{align*}
After decreasing $\eta$ so that $C\eta\le c_0/2$, this proves
\eqref{eq:critical-contraction} with $c=c_0/2$.  Since
$\Phi_\rho(0)=\Phi_\rho'(0)=0$ and
$\Phi_\rho''(0)=-v(\rho)$, while the third and fourth derivatives are
uniformly bounded on the compact strip, Taylor's theorem and
\eqref{eq:critical-variance} give
$\Re\Phi_\rho(t)\le-c_{a,b}t^2$ after shrinking $\eta$.

For the remote estimate, equality in
$|h(\rho e^{it},x)|\le h(\rho)$ would force all terms in the theta
series to have the same phase as the $k=0$ term.  In particular the
$k=1$ and $k=-1$ terms imply
\[
 t+2\pi x\in2\pi\mathbb Z,
 \qquad
 t-2\pi x\in2\pi\mathbb Z,
\]
and hence $t\in\pi\mathbb Z$.  Thus the inequality is strict whenever
$\operatorname{dist}(t,\pi\mathbb Z)\ge\eta$.  Compactness on the set
$K_{a,b}\times\{t\in\mathbb R/(2\pi\mathbb Z):
\operatorname{dist}(t,\pi\mathbb Z)
\ge\eta\}\times\mathbb T$ then gives a uniform $q<1$, and multiplication
over the coordinates yields \eqref{eq:critical-remote}.
\end{proof}

Fix a width $\eta=\eta_{a,b}>0$ for which all conclusions of
\cref{lem:critical-strip} hold, and for the remainder of this subsection set
\[
 E_0=\{t\in[-\pi,\pi]:|t|\le\eta\},
 \qquad
 E_\pi=\pi+E_0\pmod{2\pi}.
\]

\begin{lemma}[critical central expansion]
\label{lem:critical-central}
There are $d_0,C>0$ such that for $d\ge d_0$ and $ad\le n\le bd$,
\begin{align}
 \left|\frac{J_+(\xi;E_0)}{J_+(0;E_0)}
 -\Gamma_{d,\rho}(\xi)\right|&\le C_{a,b}d^{-1},
 \label{eq:critical-central-plus}\\
 \left|\frac{J_-(\xi;E_0)}{J_-(0;E_0)}
 -\Gamma_{d,\rho}(\xi)\right|&\le C_{a,b}d^{-1}.
 \label{eq:critical-central-minus}
\end{align}
Furthermore
\begin{equation}
 |J_+(0;E_0)|\asymp_{a,b}d^{-1/2},
 \qquad
 |J_-(0;E_0)|\asymp_{a,b}d^{-1/2},
 \qquad
 |\Gamma_{d,\rho}(\boldsymbol\omega)|\le q_{a,b}^d
 \label{eq:critical-base-cross}
\end{equation}
with $q_{a,b}<1$.
\end{lemma}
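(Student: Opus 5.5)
The plan mirrors the proof of \cref{lem:small-central}: rescale the central arc and feed the integrals into \cref{lem:symmetric-saddle}, now with $N=d$ and $t=u/\sqrt d$. With $s=\log\rho$ and $U=\eta\sqrt d$, set
\[
 \Psi_d(u)=d\Phi_\rho(u/\sqrt d),
 \qquad
 A_{\pm,d}(u)=\frac{1\mp\rho}{1\mp\rho e^{iu/\sqrt d}},
\]
so that, by \eqref{eq:Jplus-local}--\eqref{eq:Jminus-local},
\[
 J_\pm(\xi;E_0)=\frac{1}{2\pi\sqrt d}\,\frac1{1\mp\rho}
 \int_{-U}^{U}e^{\Psi_d(u)}A_{\pm,d}(u)P_\rho(u/\sqrt d,\xi)\,du .
\]
The local factorization is legitimate because $h(\rho e^{it},x)$ has no zeros on $|t|\le\eta$, by \cref{lem:critical-strip}.

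First I verify the phase hypotheses. Expand $\Psi_d$ by Taylor's formula in $s$, using \eqref{eq:cumulant-derivative-identity} and $d\mu(\rho)=n$. This gives \eqref{eq:symmetric-phase-expansion} with
\[
 q=\kappa_2(\rho)=v(\rho),\qquad b_3=-\kappa_3/6,\qquad b_4=\kappa_4/24.
\]
The fifth-order remainder $O(|u|^5d^{-3/2})$ is controlled uniformly on the complex segment by \eqref{eq:critical-complex-fifth}. The bounds $q_0\le q\le q_1$ come from \eqref{eq:critical-variance}, and $b_3,b_4$ are bounded by compactness of $K_{a,b}$. The decay hypothesis \eqref{eq:symmetric-phase-decay} follows from \eqref{eq:critical-phase-decay} after rescaling.

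For the poles, $\rho\le\rho(b)<1$ keeps $|1\mp\rho e^{it}|$ bounded below. Each $u$-derivative brings a factor $d^{-1/2}$, which gives \eqref{eq:symmetric-pole-hyp}, and $A_{\pm,d}(0)=1$.

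For the amplitude, take $E(\xi)=\sum_j\delta(\xi_j)$. The bound $|P|\le e^{-cE}$ follows by multiplying \eqref{eq:critical-contraction} over the coordinates. Analyticity gives $\partial_t=i\partial_s$ on $\log P$, so \eqref{eq:critical-logder} yields
\[
 |\partial_t^r\log P_\rho|\le CE,\qquad r=1,2 .
\]
In the $u$ variable the first derivative is therefore $O(E/\sqrt d)$ and the second is $O(E/d)$. Combining these through $P''=P\{(\log P)'^2+(\log P)''\}$ gives \eqref{eq:symmetric-product-hyp}. Also $P(u,0)=1$, and $P(0,\xi)=\Gamma_{d,\rho}(\xi)$ by the definition of $\phi_\rho$.

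Applying \eqref{eq:symmetric-ratio} for each sign gives \eqref{eq:critical-central-plus} and \eqref{eq:critical-central-minus} for $d\ge d_0$. Next, \eqref{eq:symmetric-base} gives
\[
 J_\pm(0;E_0)=\frac{1}{2\pi\sqrt d}\Bigl\{\frac{1}{1\mp\rho}\sqrt{2\pi/v(\rho)}+O(d^{-1})\Bigr\}.
\]
The leading term in braces is bounded above and below uniformly on $K_{a,b}$. After enlarging $d_0$ this yields $|J_\pm(0;E_0)|\asymp_{a,b}d^{-1/2}$.

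For the cross-saddle bound I use positivity, as in the proof of \cref{lem:critical-strip}. Since $h(\rho,x)>0$ and $h(\rho,0)-h(\rho,1/2)\ge4\rho\ge4\rho(a)$, there is $q_{a,b}<1$ with $0<\phi_\rho(1/2)\le q_{a,b}$. Hence $|\Gamma_{d,\rho}(\boldsymbol\omega)|\le q_{a,b}^d$.

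The only delicate point is keeping every constant uniform over $\rho\in K_{a,b}$. This concerns the choice of $\eta$, the fifth-derivative bound, the lower bound on $v$, and the threshold $N_0$ from \cref{lem:symmetric-saddle}, which depends only on the displayed constants. \Cref{lem:critical-strip} is set up to give all of these uniformly, so I expect no real obstacle beyond bookkeeping.
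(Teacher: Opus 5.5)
Your proposal is correct and follows essentially the same route as the paper: rescale with $t=u/\sqrt d$, verify the hypotheses of \cref{lem:symmetric-saddle} with $N=d$, $q=v(\rho)$, $b_3=-\kappa_3/6$, $b_4=\kappa_4/24$, $E(\xi)=\sum_j\delta(\xi_j)$, and read off \eqref{eq:symmetric-ratio} and \eqref{eq:symmetric-base}. The only cosmetic difference is that you make the cross-saddle bound explicit via $h(\rho,0)-h(\rho,1/2)\ge4\rho$ and the positivity of $h(\rho,1/2)$, whereas the paper simply invokes the uniform strict inequality $|h(\rho,1/2)|<h(\rho)$ on $K_{a,b}$.
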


\begin{proof}
Set $t=u/\sqrt d$.  Taylor's theorem with the complex fifth-derivative
bound \eqref{eq:critical-complex-fifth} gives
\begin{equation}
 d\Phi_\rho(u/\sqrt d)
 =-\frac{v(\rho)}2u^2
 -\frac{i\kappa_3(\rho)}{6\sqrt d}u^3
 +\frac{\kappa_4(\rho)}{24d}u^4
 +O_{a,b}\!\left(\frac{|u|^5}{d^{3/2}}\right).
 \label{eq:critical-phase}
\end{equation}
For $E(\xi)=\sum_j\delta(\xi_j)$,
\eqref{eq:critical-contraction}--\eqref{eq:critical-logder} and the chain
rule give
\begin{align*}
 |P_\rho(u/\sqrt d,\xi)|&\le e^{-cE(\xi)},\\
 |\partial_uP_\rho(u/\sqrt d,\xi)|
 &\le C_{a,b}d^{-1/2}(1+E(\xi))e^{-cE(\xi)},\\
 |\partial_u^2P_\rho(u/\sqrt d,\xi)|
 &\le C_{a,b}d^{-1}(1+E(\xi))^2e^{-cE(\xi)}.
\end{align*}
Set
\[
 \Psi_d(u)=d\Phi_\rho(u/\sqrt d),
 \qquad
 A_{+,d}(u)=\frac{1-\rho}{1-\rho e^{iu/\sqrt d}},
 \qquad
 A_{-,d}(u)=\frac{1+\rho}{1+\rho e^{iu/\sqrt d}}.
\]
Since $\rho\in K_{a,b}\Subset(0,1)$, direct differentiation gives
\begin{equation}
 |A_{\pm,d}(u)|\le C_{a,b},
 \qquad
 |\partial_u^rA_{\pm,d}(u)|\le C_{r,a,b}d^{-r/2},
 \qquad r=1,2.
 \label{eq:critical-pole-derivatives}
\end{equation}
Also \eqref{eq:critical-phase-decay} implies
\begin{equation}
 \Re\Psi_d(u)\le-c_{a,b}u^2,
 \qquad |u|\le\eta\sqrt d.
 \label{eq:critical-scaled-phase-decay}
\end{equation}
Thus \eqref{eq:critical-phase}, the three product bounds above,
\eqref{eq:critical-pole-derivatives}, and
\eqref{eq:critical-scaled-phase-decay} verify
\cref{lem:symmetric-saddle} with
\[
 N=d,\qquad q=v(\rho),\qquad
 b_3=-\frac{\kappa_3(\rho)}6,\qquad
 b_4=\frac{\kappa_4(\rho)}{24},\qquad
 G(\xi)=\Gamma_{d,\rho}(\xi).
\]
As in the small range,
\[
 J_\pm(\xi;E_0)
 =\frac{1}{2\pi\sqrt d}\frac1{1\mp\rho}
   \int_{-\eta\sqrt d}^{\eta\sqrt d}
   e^{\Psi_d(u)}A_{\pm,d}(u)P_\rho(u/\sqrt d,\xi)\,du.
\]
The ratio conclusion \eqref{eq:symmetric-ratio} proves
\eqref{eq:critical-central-plus}--\eqref{eq:critical-central-minus}, while
\eqref{eq:symmetric-base} gives
\begin{align}
 J_+(0;E_0)
 &=\frac1{2\pi\sqrt d}
 \left\{
 \frac1{1-\rho}\sqrt{\frac{2\pi}{v(\rho)}}+O_{a,b}(d^{-1})
 \right\}, \label{eq:critical-base-plus}\\
 J_-(0;E_0)
 &=\frac1{2\pi\sqrt d}
 \left\{
 \frac1{1+\rho}\sqrt{\frac{2\pi}{v(\rho)}}+O_{a,b}(d^{-1})
 \right\}. \label{eq:critical-base-minus}
\end{align}
The leading quantities in braces are uniformly bounded below by a positive
constant on the compact interval $K_{a,b}$.  After increasing the lower
dimension threshold, the $O_{a,b}(d^{-1})$ remainders are at most half of
this bound, and hence the asserted modulus lower bounds follow.  Finally,
$|h(\rho,1/2)|<h(\rho)$ uniformly on $K_{a,b}$, which gives the last part of
\eqref{eq:critical-base-cross}.
\end{proof}

\begin{proposition}[critical two-saddle residual]
\label{prop:critical-residual}
For every $0<a\le b<\infty$ there exists $C_{a,b}$ such that, for all
$d\ge1$ and $ad\le n\le bd$,
\begin{equation}
 \sup_{\xi\in\mathbb T^d}|r_{d,n}(\xi)|\le C_{a,b}d^{-1}.
 \label{eq:critical-residual}
\end{equation}
\end{proposition}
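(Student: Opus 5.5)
The argument mirrors the proof of \cref{prop:small-residual}, with \cref{lem:critical-central,lem:critical-strip} in place of \cref{lem:small-central} and \eqref{eq:small-remote-product}. We first dispose of small dimensions. For $d<d_0$, with $d_0$ as in \cref{lem:critical-central}, we only need a bound independent of $n$. Since $|m_{d,n}|\le1$, $|a_{d,n}|\le1$ and $|\Gamma_{d,\rho}|\le1$, we have $|r_{d,n}|\le3\le 3d_0\,d^{-1}$. Hence it suffices to treat $d\ge d_0$ and $ad\le n\le bd$, where $\rho=\rho(n/d)\in K_{a,b}$.

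Next we decompose the full contour $[-\pi,\pi]$ into $E_0$, $E_\pi$ and the remote set. By \eqref{eq:critical-remote}, and since $|1\mp\rho e^{it}|^{-1}\le(1-\rho(b))^{-1}$, the remote part of $J_\pm(\xi;[-\pi,\pi])$ is $O_{a,b}(q^d)$ uniformly in $\xi$. Relative to $|J_+(0;E_0)|\asymp d^{-1/2}$ from \eqref{eq:critical-base-cross}, this becomes $O(\sqrt d\,q^d)=O_{a,b}(d^{-1})$. By \eqref{eq:negative-arc-transport}, the $E_\pi$ contribution to $J_+(\xi;\cdot)$ equals $(-1)^nJ_-(\xi-\boldsymbol\omega;E_0)$.

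At $\xi=0$, the $E_\pi$ term is $(-1)^nJ_-(\boldsymbol\omega;E_0)$. Using \eqref{eq:critical-central-minus} together with $|\Gamma_{d,\rho}(\boldsymbol\omega)|\le q_{a,b}^d$, it is bounded by $C(d^{-1}+q_{a,b}^d)|J_-(0;E_0)|$, which is $O(d^{-1})|J_+(0;E_0)|$. Therefore $D:=J_+(0;[-\pi,\pi])=J_+(0;E_0)\{1+O(d^{-1})\}$, and $c_+:=J_+(0;E_0)/D=1+O(d^{-1})$. Put $c_-:=(-1)^nJ_-(0;E_0)/D$; by \eqref{eq:critical-base-plus}--\eqref{eq:critical-base-minus}, $|c_-|\le C_{a,b}$. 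With $F_\pm(\xi)=J_\pm(\xi;E_0)/J_\pm(0;E_0)$, \eqref{eq:full-ratio} gives
\[
 m_{d,n}(\xi)=c_+F_+(\xi)+c_-F_-(\xi-\boldsymbol\omega)+O_{a,b}(d^{-1})
\]
uniformly in $\xi$.

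To identify $c_-$, evaluate this identity at $\xi=\boldsymbol\omega$. Then $m_{d,n}(\boldsymbol\omega)=a_{d,n}$ and $F_-(0)=1$. Also $F_+(\boldsymbol\omega)=\Gamma_{d,\rho}(\boldsymbol\omega)+O(d^{-1})=O(d^{-1})$ by \eqref{eq:critical-central-plus} and the cross-saddle bound. This yields $|c_--a_{d,n}|\le C_{a,b}d^{-1}$. Finally, replace $F_+(\xi)$ by $\Gamma_{d,\rho}(\xi)$ and $F_-(\xi-\boldsymbol\omega)$ by $\Gamma_{d,\rho}(\xi-\boldsymbol\omega)$ using \eqref{eq:critical-central-plus}--\eqref{eq:critical-central-minus}; both error terms are uniform in $\xi$. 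Replacing $c_+$ by $1$ and $c_-$ by $a_{d,n}$ costs $O(d^{-1})$, because $|\Gamma_{d,\rho}|\le1$. The triangle inequality then gives \eqref{eq:critical-residual}.

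The one delicate point is that the $J_-$ normalization must be compared with $J_+(0;E_0)$ rather than with itself. This is exactly where the two-sided modulus bounds in \eqref{eq:critical-base-cross} enter. The exponential remote bound $q^d$ absorbs the $\sqrt d$ loss, so no arithmetic input is needed in this window.
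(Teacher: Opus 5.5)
Your proposal is correct and follows the paper's proof essentially step by step. Both split the contour into $E_0$, $E_\pi$ and the remote set, control the denominator $D$ through the cross-saddle value $J_-(\boldsymbol\omega;E_0)$, recover $c_-$ by evaluating at $\xi=\boldsymbol\omega$, and absorb the dimensions $d<d_0$ into the constant. The only difference is bookkeeping: you handle small $d$ first and fold $\sqrt d\,q^d$ into $O(d^{-1})$ immediately, while the paper carries the exponential terms as $q_2^d$ until the final triangle inequality.
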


\begin{proof}
For $d\ge d_0$, we split the contour into the two central arcs and the
remote part.  The remote contribution is $O(q^d)$ by
\eqref{eq:critical-remote}.  At frequency zero the negative central arc is
$(-1)^nJ_-(\boldsymbol\omega;E_0)$.  By
\eqref{eq:critical-central-minus} and \eqref{eq:critical-base-cross}, it is
$O(d^{-1}+q_1^d)$ relative to $J_+(0;E_0)$ in modulus, for some $q_1<1$.
Therefore, with $D=J_+(0;[-\pi,\pi])$,
\[
 D=J_+(0;E_0)\{1+O(d^{-1}+q_2^d)\},\qquad q_2<1,
\]
and hence $c_+=J_+(0;E_0)/D=1+O(d^{-1}+q_2^d)$.  The explicit base formulas
\eqref{eq:critical-base-plus}--\eqref{eq:critical-base-minus} also give
$|c_+|+|c_-|\le4$ for large $d$, where
$c_-=(-1)^nJ_-(0;E_0)/D$.

Set
\[
 F_+(\xi)=\frac{J_+(\xi;E_0)}{J_+(0;E_0)},
 \qquad
 F_-(\xi)=\frac{J_-(\xi;E_0)}{J_-(0;E_0)}.
\]
Since $|D|^{-1}\asymp d^{1/2}$, we may replace $q<1$ by a slightly larger
number, still smaller than one, and absorb this polynomial factor into the
exponential decay.  The exact contour decomposition therefore gives
\[
 m_{d,n}(\xi)=c_+F_+(\xi)+c_-F_-(\xi-\boldsymbol\omega)+E_{\rm rem}(\xi),
 \qquad \|E_{\rm rem}\|_{L^\infty}\le C_{a,b}q^d.
\]
Evaluating this identity at $\xi=\boldsymbol\omega$ gives
\[
 |c_--a_{d,n}|\le C_{a,b}(d^{-1}+q_2^d).
\]
Together with \eqref{eq:critical-central-plus}--
\eqref{eq:critical-central-minus} and the preceding coefficient bounds,
this gives \eqref{eq:critical-residual} by the triangle inequality.  The
finitely many dimensions $d<d_0$ are absorbed by increasing $C_{a,b}$.
\end{proof}

\begin{corollary}[critical maximal range]
\label{thm:critical}
For every $0<a\le b<\infty$ there is $C'_{a,b}<\infty$ such that
\[
 \left\|\sup_{ad\le n\le bd}|M_{d,n}f|\right\|_2
 \le C'_{a,b}\|f\|_2.
\]
\end{corollary}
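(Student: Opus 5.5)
We mirror the proof of \cref{thm:small}, now on the window $ad\le n\le bd$. For every such $n\ge1$, \eqref{eq:two-saddle-model} and \eqref{eq:residual-operator} give the exact decomposition
\[
 M_{d,n}f=G_{d,\rho(n/d)}f
 +a_{d,n}\,U_{\boldsymbol\omega}G_{d,\rho(n/d)}U_{\boldsymbol\omega}f
 +R_{d,n}f .
\]
Indeed $G_{d,\rho}$ has multiplier $\Gamma_{d,\rho}$, and $U_{\boldsymbol\omega}G_{d,\rho}U_{\boldsymbol\omega}$ has multiplier $\Gamma_{d,\rho}(\cdot-\boldsymbol\omega)$. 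If $a=0$ were permitted, the level $n=0$ would be the identity, but here $a>0$, so every $n$ in the window satisfies $n\ge1$.

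We take the supremum over $ad\le n\le bd$ term by term. The first term is bounded by $\sup_{0<\rho<1}|G_{d,\rho}f|$, which \eqref{eq:known-gaussian} with $p=2$ controls by $C_2\|f\|_2$. Since $|a_{d,n}|\le1$ by \eqref{eq:parity-coeff}, the second term is bounded by $\sup_\rho|U_{\boldsymbol\omega}G_{d,\rho}U_{\boldsymbol\omega}f|$, which \eqref{eq:translated-gaussian-maximal} controls with the same constant. Both bounds are independent of $d$ and of the particular saddle parameters $\rho(n/d)\in K_{a,b}$.

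For the residual we replace the supremum by an $\ell^2$-sum and apply Plancherel with \eqref{eq:critical-residual}:
\[
 \Bigl\|\sup_{ad\le n\le bd}|R_{d,n}f|\Bigr\|_2^2
 \le\sum_{ad\le n\le bd}\|r_{d,n}\widehat f\|_{L^2(\mathbb T^d)}^2
 \le\#\{n\in\mathbb N: ad\le n\le bd\}\;C_{a,b}^2d^{-2}\|f\|_2^2 .
\]
The number of integer levels is at most $(b-a)d+1\le(b-a+1)d$. The right-hand side is therefore at most $C_{a,b}^2(b-a+1)d^{-1}\|f\|_2^2\le C_{a,b}^2(b-a+1)\|f\|_2^2$, uniformly in $d$.

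Combining the three pieces gives $C'_{a,b}=2C_2+C_{a,b}(b-a+1)^{1/2}$. There is no real obstacle at this stage. The substantive input, namely the uniform $O(d^{-1})$ residual bound valid for all $d\ge1$, including small $d$ absorbed into $C_{a,b}$, is already provided by \cref{prop:critical-residual}. The only point to check is that the count of $O(d)$ levels against the squared error $O(d^{-2})$ leaves a bounded, indeed decaying, total.
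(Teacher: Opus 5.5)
Your proof is correct and is the paper's argument: split off the centered and parity-translated Gaussian branches, which are controlled by \eqref{eq:known-gaussian} and \eqref{eq:translated-gaussian-maximal} using $|a_{d,n}|\le1$, and square-sum the residual over the $O(d)$ levels using Plancherel and the uniform $O(d^{-1})$ bound of \cref{prop:critical-residual}. Your level count $(b-a)d+1$ is slightly sharper than the paper's $(b-a)d+2$, which makes no difference.
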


\begin{proof}
There are at most $(b-a)d+2$ squared-radius levels.  Hence, by Plancherel and
\eqref{eq:critical-residual},
\[
 \left\|\sup_{ad\le n\le bd}|R_{d,n}f|\right\|_2
 \le \frac{C_{a,b}}d\sqrt{(b-a)d+2}\,\|f\|_2.
\]
Adding the centered and translated Gaussian branches, using
\eqref{eq:known-gaussian} and \eqref{eq:translated-gaussian-maximal}, finishes the proof.
\end{proof}

\subsection{Growing \texorpdfstring{$\alpha$}{alpha}: the central arcs}

Fix $A>0$.  Write
\[
 \rho=e^{-\tau},\qquad \alpha=\frac nd.
\]
For $w\in\mathbb C$ with $\Re w>0$, Poisson summation gives
\begin{equation}
 h(e^{-w},x)=\sqrt{\frac\pi w}
 \sum_{m\in\mathbb Z}e^{-\pi^2(m+x)^2/w}.
 \label{eq:growing-poisson}
\end{equation}
Here and below the square root is the analytic branch on $\Re w>0$ that
is positive for positive real $w$.

\Needspace{5\baselineskip}
\begin{lemma}[growing saddle scale]
\label{lem:growing-scale}
There exists $\alpha_*>0$ such that, for $\alpha\ge\alpha_*$,
\begin{equation}
 \frac1{4\alpha}\le\tau(\alpha)\le\frac1{2\alpha}.
 \label{eq:growing-tau}
\end{equation}
If $L(-\tau)=\log h(e^{-\tau})$, then, for $r=2,3,4,5$,
\begin{equation}
 \tau^rL^{(r)}(-\tau)=\frac{(r-1)!}{2}+O(e^{-c/\tau}).
 \label{eq:growing-cumulants}
\end{equation}
In particular, after increasing $\alpha_*$ if necessary,
\begin{equation}
 q_g(\tau):=\tau^2L''(-\tau)\in\left[\frac38,\frac58\right].
 \label{eq:growing-exact-quadratic}
\end{equation}
Moreover, after increasing $\alpha_*$ if necessary,
\begin{equation}
 \sup_{|t|\le\tau/2}|L^{(5)}(-\tau+it)|\le C\tau^{-5}.
 \label{eq:growing-complex-fifth}
\end{equation}
\end{lemma}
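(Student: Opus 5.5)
The plan is to work entirely from the Poisson formula \eqref{eq:growing-poisson} at $x=0$, written as
\[
 h(e^{-w})=\sqrt{\pi/w}\,\bigl(1+\varepsilon(w)\bigr),
 \qquad
 \varepsilon(w)=2\sum_{m\ge1}e^{-\pi^2m^2/w}.
\]
This gives
\[
 L(-w)=\tfrac12\log\pi-\tfrac12\log w+\log(1+\varepsilon(w)).
\]
We differentiate in the variable $z=-w$, since $L(z)=\log h(e^z)$. For the main term, $\frac{d^r}{dz^r}\bigl(-\tfrac12\log(-z)\bigr)=\tfrac12(r-1)!\,(-z)^{-r}$ for $r\ge1$, because each $z$-derivative of $(-z)^{-k}$ produces $+k(-z)^{-k-1}$.

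At $z=-\tau$ this main term gives $\tau^rL^{(r)}(-\tau)=(r-1)!/2$ exactly. For the correction, note that $\varepsilon(w)$ and all its $w$-derivatives are $O(\tau^{-2r}e^{-\pi^2\Re(1/w)})$. When $|t|\le\tau/2$ and $w=\tau-it$, we have $\Re(1/w)=\tau/(\tau^2+t^2)\ge \tfrac45\tau^{-1}$. So every derivative of $\log(1+\varepsilon)$ of order at most $5$ is $O(e^{-c/\tau})$ uniformly on the disc $|w-\tau|\le\tau/2$. This follows from Cauchy estimates on a slightly larger disc, where $|\varepsilon|\le1/2$ once $\tau$ is small. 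Multiplying by $\tau^r$ keeps this $O(e^{-c/\tau})$, after shrinking $c$, which proves \eqref{eq:growing-cumulants}.

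The complex fifth-derivative bound \eqref{eq:growing-complex-fifth} follows from the same computation. The main term has modulus $12|w|^{-5}\le 12\tau^{-5}$, and the correction is exponentially small. The branch question does not arise: $1+\varepsilon$ stays in the disc $|\zeta-1|\le1/2$, and $\log w$ is analytic on $\Re w>0$. This branch agrees with the real logarithm at $t=0$, as the paper requires.

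Next comes the saddle relation. Since $\mu(\rho)=\rho h'(\rho)/h(\rho)=L'(\log\rho)$, we have
\[
 \alpha=\mu(e^{-\tau})=L'(-\tau)=\frac1{2\tau}+O(\tau^{-1}e^{-c/\tau}).
\]
The remainder is controlled as above, for $\tau\le\tau_0$. Hence $\alpha\tau\in[1/2-\epsilon,1/2+\epsilon]$ for small $\tau$, which gives
\[
 \tfrac1{4\alpha}\le\tau\le\tfrac1{2\alpha}
\]
up to the sign of the error. The upper bound $\tau\le1/(2\alpha)$ is tight at leading order, so it requires the sign of the error, which should be checked directly. Write $L'(-\tau)=\frac1{2\tau}+\frac{\varepsilon'(\tau)}{1+\varepsilon(\tau)}$, with a minus sign coming from $dw/dz=-1$. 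Here $\varepsilon$ is decreasing in $1/w$ and increasing in $w$, so $\varepsilon'(\tau)>0$ and the correction to $\frac{d}{dz}L$ is negative. That gives $\alpha\le 1/(2\tau)$, i.e. $\tau\le 1/(2\alpha)$. The lower bound has plenty of room.

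To pass from "small $\tau$" to "large $\alpha$", use that $\mu$ is strictly increasing with $\mu\to\infty$ as $\rho\to1$. Choosing $\alpha_*\ge\mu(e^{-\tau_0})$ therefore forces $\tau(\alpha)\le\tau_0$. Finally, \eqref{eq:growing-exact-quadratic} is the case $r=2$, which gives $q_g=1/2+O(e^{-c/\tau})$, inside $[3/8,5/8]$ after enlarging $\alpha_*$. The main obstacle is bookkeeping rather than substance. One must keep the sign conventions between $z=-w$ derivatives straight, both for the exact constant $(r-1)!/2$ and for the one-sided bound $\tau\le1/(2\alpha)$. One must also make the Cauchy-estimate step uniform in the complex disc $|t|\le\tau/2$, which needs $\Re(1/w)\gtrsim1/\tau$ there.
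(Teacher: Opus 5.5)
Your proposal is correct and follows the paper's proof essentially step for step. It uses the same Poisson factorization $L(-w)=\tfrac12\log\pi-\tfrac12\log w+\log(1+\varepsilon(w))$, the same exact main term $(r-1)!/(2\tau^r)$, the same sign observation $\varepsilon'(\tau)>0$ for the sharp bound $\tau\le1/(2\alpha)$, and the same uniform exponential smallness of the correction on $|t|\le\tau/2$ via $\Re(1/w)\gtrsim1/\tau$. The only difference is cosmetic: you bound the derivatives of $\log(1+\varepsilon)$ by Cauchy estimates on a slightly larger disc, whereas the paper differentiates the Poisson series termwise and uses the finite differentiation formula, and both give the same $O(\tau^{-r}e^{-c/\tau})$ bound.
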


\begin{proof}
At $x=0$, \eqref{eq:growing-poisson} gives
\[
 h(e^{-\tau})=\sqrt{\frac\pi\tau}\,(1+S(\tau)),
 \qquad
 S(\tau)=2\sum_{m\ge1}e^{-\pi^2m^2/\tau}.
\]
Since $S'(\tau)>0$ and $\alpha=\mu(e^{-\tau})
=-\frac{d}{d\tau}\log h(e^{-\tau})$,
\begin{equation}
 \alpha=\frac1{2\tau}-\frac{S'(\tau)}{1+S(\tau)}
 <\frac1{2\tau}.
 \label{eq:growing-tau-upper-sign}
\end{equation}
Thus $\tau<(2\alpha)^{-1}$.  On the other hand, termwise
differentiation gives
\[
 0\le\tau S'(\tau)
 \le C\tau^{-1}e^{-\pi^2/\tau}.
\]
Since $\tau(\alpha)\to0$ as $\alpha\to\infty$, after increasing
$\alpha_*$ the last display is at most $1/4$.  Hence
\[
 \alpha\tau
 =\frac12-\frac{\tau S'(\tau)}{1+S(\tau)}
 \ge\frac14,
\]
which proves the lower bound in \eqref{eq:growing-tau}.

Taking the logarithm of the Poisson formula gives
\begin{equation}
 L(-\tau)=\frac12\log\pi-\frac12\log\tau+\log(1+S(\tau)).
 \label{eq:growing-real-log-poisson}
\end{equation}
Since the radial variable is $s=-\tau$, one has $\partial_s=-\partial_\tau$,
and therefore, for $r\ge1$,
\[
 (-\partial_\tau)^r\!\left(-\frac12\log\tau\right)
 =\frac{(r-1)!}{2\tau^r}.
\]
For each fixed $r\le5$, termwise differentiation of $S$ gives
$|\partial_\tau^rS(\tau)|\le C_r\tau^{-M_r}e^{-\pi^2/\tau}$ for some
integer $M_r$.  The finite differentiation formula for $\log(1+S)$,
together with the fact that the exponential absorbs every fixed power of
$\tau^{-1}$, yields
\[
 \tau^r\big|\partial_\tau^r\log(1+S(\tau))\big|
 \le C_re^{-c/\tau}.
\]
Applying $(-\partial_\tau)^r$ to \eqref{eq:growing-real-log-poisson}
proves \eqref{eq:growing-cumulants}.  In particular,
\[
 \tau^2L''(-\tau)=\frac12+O(e^{-c/\tau}),
\]
and increasing $\alpha_*$ so that the error is at most $1/8$ gives
\eqref{eq:growing-exact-quadratic}.

It remains to record the complex estimate needed below.  Put
$w=\tau-it$ and, for $\Re w>0$,
\[
 S(w)=2\sum_{m\ge1}e^{-\pi^2m^2/w}.
\]
If $|t|\le\tau/2$, then
\[
 \Re\frac1w=\frac{\tau}{\tau^2+t^2}\ge\frac4{5\tau}.
\]
For $0\le j\le5$, direct differentiation has the form
\[
 \partial_w^j e^{-a/w}
 =w^{-j}P_j(a/w)e^{-a/w},
\]
with a fixed polynomial $P_j$.  Therefore
\begin{equation}
 |\partial_w^jS(w)|\le C_j\tau^{-j}e^{-c/\tau},
 \qquad 0\le j\le5.
 \label{eq:growing-S-complex-derivatives}
\end{equation}
For $\alpha_*$ large, \eqref{eq:growing-S-complex-derivatives} with
$j=0$ gives $|S(w)|\le1/2$, so
\[
 L(-w)=\frac12\log\pi-\frac12\log w+\log(1+S(w))
\]
with one analytic branch on this sector.  Since
$\partial_{(-w)}=-\partial_w$, five differentiations and
\eqref{eq:growing-S-complex-derivatives} yield
\[
 |L^{(5)}(-w)|\le C|w|^{-5}+C\tau^{-5}e^{-c/\tau}
 \le C\tau^{-5},
\]
which is \eqref{eq:growing-complex-fifth}.
\end{proof}

For $x$ reduced to $[-1/2,1/2]$, one dual Gaussian dominates near $x=0$,
while two must be retained near $x=\pm1/2$.

\Needspace{5\baselineskip}
\begin{lemma}[central theta quotient and logarithmic derivatives]
\label{lem:two-image}
Let
\[
 r(x)=\operatorname{dist}(x,\mathbb Z).
\]
There are $\tau_0,c,C>0$ such that, whenever
$0<\tau\le\tau_0$, $|\sigma|\le\tau/2$, $w=\tau+i\sigma$, and
$x\in\mathbb R$, both $h(e^{-w},x)$ and $h(e^{-w},0)$ are nonzero.  With
$R(w,x)=h(e^{-w},x)/h(e^{-w},0)$, one has
\begin{align}
 |R(w,x)|&\le \exp\!\left(-c\frac{r(x)^2}{\tau}\right),
 \label{eq:growing-one-coordinate-contract}\\
 |\partial_w\log R(w,x)|&\le C\frac{r(x)^2}{\tau^2},
 \label{eq:growing-one-coordinate-log1}\\
 |\partial_w^2\log R(w,x)|&\le C\frac{r(x)^2}{\tau^3}.
 \label{eq:growing-one-coordinate-log2}
\end{align}
\end{lemma}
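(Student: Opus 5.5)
The plan is to work entirely on the dual (Poisson) side, using \eqref{eq:growing-poisson}. Write $\beta=\pi^2/w$, so that
\[
 \Re\beta=\pi^2\tau/|w|^2\ge\tfrac{4\pi^2}{5\tau},\qquad |\beta|\le\pi^2/\tau ,
\]
and set
\[
 \Theta(w,x)=\sum_{m\in\mathbb Z}e^{-\beta(m+x)^2},
 \qquad R(w,x)=\frac{\Theta(w,x)}{\Theta(w,0)} .
\]
The prefactor $\sqrt{\pi/w}$ cancels in the quotient. By periodicity and evenness we may take $x\in[0,1/2]$, so that $r(x)=x$. We have $\Theta(w,0)=1+O(e^{-c/\tau})$, which is nonzero for $\tau\le\tau_0$. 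For nonvanishing of $h(e^{-w},x)$ I would split into two cases.

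\emph{Case $x\le1/4$.} Here
\[
 \Theta(w,x)=e^{-\beta x^2}\bigl(1+O(e^{-c/\tau})\bigr),
\]
since each $m\ne0$ term has $\Re\beta\{(m+x)^2-x^2\}\ge\Re\beta/2$.

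\emph{Case $1/4\le x\le1/2$.} Keep the two images $m=0,-1$:
\[
 \Theta(w,x)=e^{-\beta x^2}+e^{-\beta(1-x)^2}+O(e^{-c/\tau}e^{-\Re\beta\,x^2}).
\]
The two-term sum factors as
\[
 e^{-\beta x^2}\bigl(1+e^{-\beta(1-2x)}\bigr).
\]
This is the main obstacle: $1+e^{-\beta y}$ with $y=1-2x$ may vanish when $\Im\beta\,y\approx\pi$ and $\Re\beta\, y\approx0$. I expect this to be excluded by the sector condition. Since $|\arg\beta|=|\arg w|\le\arctan(1/2)<\pi/4$, we get $|\Im\beta\,y|\le\tfrac12\Re\beta\,y$. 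So if $\Re\beta\,y\le1$ then $|\Im(\beta y)|\le1/2$ and $\Re(e^{-\beta y})\ge e^{-1}\cos(1/2)>0$, while if $\Re\beta\,y\ge1$ then $|e^{-\beta y}|\le e^{-1}$. In either case $|1+e^{-\beta y}|\ge c_1>0$, and the $O(e^{-c/\tau})$ tail is absorbed for small $\tau_0$. This gives nonvanishing together with the bound
\[
 |\Theta(w,x)|\le 3e^{-\Re\beta\,x^2}.
\]
For \eqref{eq:growing-one-coordinate-contract} this bound suffices when $x^2/\tau$ is large. When $x^2/\tau$ is small (so $x\le1/4$) I would use the refined first-case formula, whose modulus is $e^{-\Re\beta x^2}(1+O(e^{-c/\tau}))$. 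To avoid the stray factor $1+O(e^{-c/\tau})$ when $x$ is tiny, I would expand
\[
 \Theta(w,x)/\Theta(w,0)-e^{-\beta x^2}
\]
as a sum over $m\ne0$ of terms each vanishing to order $x^2\cdot O(e^{-c/\tau})$ at $x=0$, after pairing $m$ with $-m$ so that the linear terms cancel. This yields
\[
 |R|\le e^{-\Re\beta x^2}\bigl(1+Cx^2e^{-c/\tau}\bigr)\le e^{-c x^2/\tau}.
\]

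For the logarithmic derivatives I would write
\[
 \log R=-\beta x^2+\log\frac{1+E(w,x)}{1+E(w,0)},
\]
where $1+E=\sum_m e^{-\beta[(m+x)^2-x^2]}$, and use $\partial_w=-(\beta/w)\partial_\beta$, so that each $w$-derivative costs a factor $\tau^{-1}$ times powers of $\beta\asymp\tau^{-1}$. The main term contributes $\partial_w(-\beta x^2)=\pi^2x^2/w^2=O(x^2/\tau^2)$ and $\partial_w^2=O(x^2/\tau^3)$, which are exactly the claimed sizes.

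In the first case, every image term $e^{-\beta[(m+x)^2-x^2]}$ with $m\ne0$, and its $\beta$-derivatives, is $O(e^{-c/\tau})$ uniformly. The difference $E(w,x)-E(w,0)$, being even in $x$ after pairing, is $O(x^2e^{-c/\tau})$, and after differentiation it is bounded by $x^2\tau^{-k}e^{-c/\tau}\lesssim x^2/\tau^{k+1}$.

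In the second case $x\ge1/4$, so $x^2/\tau\gtrsim1/\tau$. There it suffices to bound $\partial_w^k\log(1+e^{-\beta y}+\text{tail})$ by $C\tau^{-k-1}$. This follows because the denominator is bounded below by $c_1$ (from the main step above) and the numerator derivatives are $O(|\beta|^k|w|^{-k}(1+|\beta|)^k)$, noting that $y\le1$ and $\beta y\,e^{-\beta y}$ is bounded in the sector. This is crude but enough, since $\tau^{-k-1}\lesssim x^2\tau^{-k-1}$ here.
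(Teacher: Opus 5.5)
Your proposal is correct and follows the paper's proof essentially step for step. Both use the Poisson-side quotient and split at $r=1/4$. In the one-image region, both pair $m$ with $-m$ so that evenness in $x$ gives the factor $r^2$ and removes the stray $1+O(e^{-c/\tau})$. In the two-image region, both use the sector condition to keep $|1+e^{-\beta y}|$ bounded below, and absorb the crude derivative bounds using $r^2\ge 1/16$.

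One bookkeeping slip: the displayed bound $O(|\beta|^k|w|^{-k}(1+|\beta|)^k)$ is of size $\tau^{-3k}$ and would not suffice as written. The mechanism you then cite is the right one: $\partial_w^k e^{-\beta y}=w^{-k}P_k(\beta y)e^{-\beta y}$ with $P_k$ a fixed polynomial, and $(\beta y)^j e^{-\beta y}$ is bounded on the sector, so these derivatives are $O(\tau^{-k})$. The paper reaches the same bound by splitting according to $b(r)\ge\tau$ or $b(r)<\tau$ instead.
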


\begin{proof}
By periodicity and evenness it is enough to take $r=r(x)\in[0,1/2]$.
Poisson summation gives
\begin{equation}
 R(w,r)=
 \frac{\sum_{m\in\mathbb Z}e^{-\pi^2(m+r)^2/w}}
      {\sum_{m\in\mathbb Z}e^{-\pi^2m^2/w}},
 \qquad
 \Re\frac1w=\frac{\tau}{\tau^2+\sigma^2}\ge\frac4{5\tau}.
 \label{eq:growing-ratio-poisson}
\end{equation}
The denominator in \eqref{eq:growing-ratio-poisson} is
$1+O(e^{-c/\tau})$ after the common Poisson factor has been removed, and
is therefore bounded away from zero.

Suppose first that $0\le r\le1/4$.  After factoring out the image $m=0$
and pairing $m$ with $-m$, write
\begin{equation}
 R(w,r)=e^{-\pi^2r^2/w}\frac{1+E(w,r)}{1+E(w,0)},
 \qquad
 E(w,r)=2\sum_{m\ge1}e^{-\pi^2m^2/w}
              \cosh\!\left(\frac{2\pi^2mr}{w}\right).
 \label{eq:growing-nearest-image-explicit}
\end{equation}
For $0\le r\le1/4$ and $|\sigma|\le\tau/2$, the real part of the
exponent in every term with $m\ge1$ is at most $-c m^2/\tau$.  Thus, for
$j=0,1,2$,
\begin{equation}
 \left|\partial_r^2\partial_w^j E(w,r)\right|
 \le C_j\tau^{-2j-2}
       \sum_{m\ge1}m^{4j+4}e^{-c m^2/\tau}
 \le C_j\tau^{-2j-2}e^{-c/\tau}.
 \label{eq:growing-nearest-image-r2}
\end{equation}
The function $E(w,r)$ is even in $r$, hence
$\partial_rE(w,0)=0$.  Integrating \eqref{eq:growing-nearest-image-r2}
twice gives
\begin{equation}
 \left|\partial_w^j\{E(w,r)-E(w,0)\}\right|
 \le C_j r^2\tau^{-2j-2}e^{-c/\tau},
 \qquad j=0,1,2.
 \label{eq:growing-nearest-image-error}
\end{equation}
After decreasing $\tau_0$, $|E(w,r)|+|E(w,0)|\le1/4$.  Hence
\begin{align*}
 \log R(w,r)
 &=-\frac{\pi^2r^2}{w}
   +\log(1+E(w,r))-\log(1+E(w,0)),\\
 \left|\log\frac{1+E(w,r)}{1+E(w,0)}\right|
 &\le C r^2\tau^{-2}e^{-c/\tau},
\end{align*}
and the same use of the quotient rule together with
\eqref{eq:growing-nearest-image-error} gives
\begin{align*}
 \left|\partial_w\log R(w,r)-\frac{\pi^2r^2}{w^2}\right|
 &\le C r^2\tau^{-4}e^{-c/\tau},\\
 \left|\partial_w^2\log R(w,r)+\frac{2\pi^2r^2}{w^3}\right|
 &\le C r^2\tau^{-6}e^{-c/\tau}.
\end{align*}
For every fixed integer $m\ge0$ and $c>0$,
\begin{equation}
 \tau^{-m}e^{-c/\tau}
 \le C_{m,c}e^{-c/(2\tau)},
 \qquad 0<\tau\le1,
 \label{eq:exp-absorbs-powers}
\end{equation}
so the exponentially small factors absorb the excess powers of
$\tau^{-1}$.  Together with $\Re(1/w)\ge4/(5\tau)$, these inequalities prove
\eqref{eq:growing-one-coordinate-contract}--
\eqref{eq:growing-one-coordinate-log2} for $0\le r\le1/4$.

Now let $1/4\le r\le1/2$.  The two nearest images are $m=0,-1$.  Put
\[
 b(r)=\pi^2(1-2r),\qquad z(w,r)=e^{-b(r)/w}.
\]
After factoring out $e^{-\pi^2r^2/w}$ the numerator is
\begin{equation}
 e^{-\pi^2r^2/w}\{1+z(w,r)+\mathcal T(w,r)\},
 \qquad
 \mathcal T(w,r)=
 \sum_{m\notin\{0,-1\}}
 e^{-\pi^2((m+r)^2-r^2)/w},
 \label{eq:growing-two-image-explicit}
\end{equation}
and the denominator is $1+\mathcal T_0(w)$ with
\[
 \mathcal T_0(w)=\sum_{m\ne0}e^{-\pi^2m^2/w}.
\]
The two retained images cannot nearly cancel.  Indeed, if
$a=b(r)\Re(1/w)\ge\log2$, then $|z|\le1/2$.  If $a<\log2$, then
\[
 |\arg z|=b(r)|\Im(1/w)|
 \le\frac{|\sigma|}{\tau}a
 \le\frac12\log2<\frac\pi2,
\]
so $\Re z>0$.  Hence
\begin{equation}
 |1+z(w,r)|\ge\frac12.
 \label{eq:growing-two-image-noncancel}
\end{equation}

The remaining images are separated from the retained pair by a fixed
Gaussian gap.  More precisely, for $1/4\le r\le1/2$,
\[
 (1+r)^2-r^2=1+2r\ge\frac32,
 \qquad
 (2-r)^2-r^2=4-4r\ge2,
\]
and all farther images have a larger separation.  Differentiating the
series in \eqref{eq:growing-two-image-explicit} therefore gives, for
$j=0,1,2$,
\begin{equation}
 |\partial_w^j\mathcal T(w,r)|
 +|\partial_w^j\mathcal T_0(w)|
 \le C_j\tau^{-2j}e^{-c/\tau}.
 \label{eq:growing-two-image-relative-tail}
\end{equation}
In view of \eqref{eq:growing-two-image-noncancel}, the numerator bracket
in \eqref{eq:growing-two-image-explicit} is bounded away from zero after
$\tau_0$ is decreased.

It remains to estimate the retained pair.  Since
\[
 \partial_w\log(1+z)
 =\frac{b(r)z}{w^2(1+z)},
\]
\[
 \partial_w^2\log(1+z)
 =-\frac{2b(r)z}{w^3(1+z)}
   +\frac{b(r)^2z}{w^4(1+z)^2},
\]
we split according to $b(r)\ge\tau$ or $b(r)<\tau$.  In the first case
$|z|\le e^{-c b(r)/\tau}$, while in the second $b(r)\le\tau$.
Using \eqref{eq:growing-two-image-noncancel} in both cases yields
\begin{equation}
 |\partial_w\log(1+z(w,r))|\le C\tau^{-2},
 \qquad
 |\partial_w^2\log(1+z(w,r))|\le C\tau^{-3}.
 \label{eq:growing-two-image-logder}
\end{equation}
The perturbation estimate \eqref{eq:growing-two-image-relative-tail} and
one more application of the quotient rule give the same bounds after
$1+z$ is replaced by $1+z+\mathcal T$, and also for the denominator
$1+\mathcal T_0$.

Consequently
\[
 |R(w,r)|\le C e^{-c r^2/\tau},
 \qquad
 |\partial_w\log R(w,r)|\le C\tau^{-2},
 \qquad
 |\partial_w^2\log R(w,r)|\le C\tau^{-3}.
\]
Since $r^2\ge1/16$ in this region, the prefactor in the first inequality
is absorbed by decreasing the exponent, and the derivative bounds imply
\eqref{eq:growing-one-coordinate-log1}--
\eqref{eq:growing-one-coordinate-log2} after changing the constants.
The displayed lower bounds give the required nonvanishing, so a single
analytic branch of the logarithm may be used throughout the sector.
\end{proof}

For $\xi\in\mathbb T^d$ set
\begin{equation}
 \mathcal E_\tau(\xi)=\frac1\tau\sum_{j=1}^d
 \operatorname{dist}(\xi_j,\mathbb Z)^2.
 \label{eq:growing-central-energy}
\end{equation}
The preceding one-coordinate estimates give the product bounds needed for
the saddle expansion.

\begin{lemma}[growing product derivatives]
\label{lem:growing-product-derivatives}
After decreasing $\tau_0$, for $|u|\le\sqrt d/2$ and
$t=\tau u/\sqrt d$,
\begin{align}
 |P_\rho(t,\xi)|
 &\le e^{-c\mathcal E_\tau(\xi)},
 \label{eq:growing-product-contract-central}\\
 |\partial_uP_\rho(t,\xi)|
 &\le \frac C{\sqrt d}\,(1+\mathcal E_\tau(\xi))
 e^{-c\mathcal E_\tau(\xi)},
 \label{eq:growing-product-first}\\
 |\partial_u^2P_\rho(t,\xi)|
 &\le \frac Cd\,(1+\mathcal E_\tau(\xi))^2
 e^{-c\mathcal E_\tau(\xi)}.
 \label{eq:growing-product-second}
\end{align}
\end{lemma}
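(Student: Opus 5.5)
The plan is to reduce everything to the one-coordinate bounds of \cref{lem:two-image}. Put $w=\tau-it$, so that $e^{-w}=\rho e^{it}$ and $|\Im w|=|t|$. For $|u|\le\sqrt d/2$ one has $|t|=\tau|u|/\sqrt d\le\tau/2$, so $w$ lies in the sector of \cref{lem:two-image}. Since $R(\bar w,x)=\overline{R(w,x)}$ (the theta series has real coefficients in $e^{-w}$ after pairing $k$ with $-k$), the lemma applies with $\sigma=-t$. Hence every factor $h(\rho e^{it},\xi_j)/h(\rho e^{it},0)=R(w,\xi_j)$ is nonzero, and we may write
\[
 \log P_\rho(t,\xi)=\sum_{j=1}^d\log R(w,\xi_j),
\]
using the analytic branch fixed in that lemma. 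Multiplying \eqref{eq:growing-one-coordinate-contract} over $j$ gives $|P_\rho(t,\xi)|\le\exp(-c\sum_j r(\xi_j)^2/\tau)=e^{-c\mathcal E_\tau(\xi)}$. This is \eqref{eq:growing-product-contract-central}.

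For the derivatives, the chain rule gives $\partial_u=(\partial w/\partial u)\partial_w=-i(\tau/\sqrt d)\partial_w$. Then \eqref{eq:growing-one-coordinate-log1} yields
\[
 |\partial_u\log P_\rho|\le\frac{\tau}{\sqrt d}\sum_j C\frac{r(\xi_j)^2}{\tau^2}=\frac{C}{\sqrt d}\mathcal E_\tau(\xi).
\]
Likewise \eqref{eq:growing-one-coordinate-log2} yields
\[
 |\partial_u^2\log P_\rho|\le\frac{\tau^2}{d}\sum_j C\frac{r(\xi_j)^2}{\tau^3}=\frac Cd\mathcal E_\tau(\xi).
\]
The factor of $\tau$ per $w$-derivative exactly cancels the excess inverse powers of $\tau$ in the lemma. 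That cancellation is the reason for the local scale $t=\tau u/\sqrt d$.

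Finally, use $\partial_uP=P\,\partial_u\log P$ and $\partial_u^2P=P\{(\partial_u\log P)^2+\partial_u^2\log P\}$. Together with the bound on $|P|$ these give \eqref{eq:growing-product-first}, with factor $C d^{-1/2}\mathcal E_\tau$. They also give \eqref{eq:growing-product-second}, with factor $C(d^{-1}\mathcal E_\tau^2+d^{-1}\mathcal E_\tau)\le Cd^{-1}(1+\mathcal E_\tau)^2$.

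The only point needing care is the range condition, and I do not expect a genuine obstacle here. The phrase ``after decreasing $\tau_0$'' ensures $\tau\le\tau_0$ from \cref{lem:two-image}, which holds once $\alpha$ is large by \eqref{eq:growing-tau}. The constraint $|u|\le\sqrt d/2$ ensures $|t|\le\tau/2$. One should also check that a single analytic branch of $\log R$ can be used along the segment in $u$. This follows from the nonvanishing in \cref{lem:two-image}, since the sector is simply connected.
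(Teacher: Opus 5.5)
Your proposal is correct and follows the paper's proof essentially step for step. You set $w=\tau-it$, multiply the one-coordinate contraction of \cref{lem:two-image} over the coordinates, use the chain rule $\partial_u=-i(\tau/\sqrt d)\,\partial_w$ with the logarithmic-derivative bounds, and conclude via $P''=P\{(\log P)'^2+(\log P)''\}$. The conjugation remark is harmless but unnecessary, since \cref{lem:two-image} already allows $\sigma=-t$ of either sign.
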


\begin{proof}
Put $w=\tau-it$.  By \eqref{eq:growing-one-coordinate-contract} and
multiplication over the coordinates,
\[
 |P_\rho(t,\xi)|\le
 \exp\!\left(-\frac c\tau\sum_jr(\xi_j)^2\right)
 =e^{-c\mathcal E_\tau(\xi)}.
\]
The chain rule and \eqref{eq:growing-one-coordinate-log1} give
\[
 |\partial_u\log P_\rho|
 \le\frac{\tau}{\sqrt d}
 \sum_j C\frac{r(\xi_j)^2}{\tau^2}
 =\frac C{\sqrt d}\mathcal E_\tau(\xi),
\]
while \eqref{eq:growing-one-coordinate-log2} gives
\[
 |\partial_u^2\log P_\rho|
 \le\frac Cd\mathcal E_\tau(\xi).
\]
Since
$P''=P\{(\log P)'^2+(\log P)''\}$, the last two estimates and
$\sup_{y\ge0}(1+y)^2e^{-cy}<\infty$ give
\eqref{eq:growing-product-first}--\eqref{eq:growing-product-second}.
\end{proof}

\begin{lemma}[growing central phase decay]
\label{lem:growing-phase-decay}
There are $\alpha_1$ large and $0<\eta_0\le1/2$ such that, whenever
$\alpha=n/d\ge\alpha_1$ and $|t|\le\eta_0\tau(\alpha)$,
\begin{equation}
 \Re\Phi_\rho(t)\le-c\frac{t^2}{\tau^2}.
 \label{eq:growing-phase-decay}
\end{equation}
Consequently, for $t=\tau u/\sqrt d$,
\begin{equation}
 |e^{d\Phi_\rho(t)}|\le e^{-cu^2},
 \qquad |u|\le\eta_0\sqrt d.
 \label{eq:growing-scaled-phase-decay}
\end{equation}
\end{lemma}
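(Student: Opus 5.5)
We prove \eqref{eq:growing-phase-decay} by writing $\Re\Phi_\rho(t)$ as a real integral and controlling its second derivative. Since $\Phi_\rho(t)=L(-\tau+it)-L(-\tau)-i\mu(\rho)t$ with $\mu(\rho)=L'(-\tau)$ real, we have $\Phi_\rho(0)=\Phi_\rho'(0)=0$. Because $L$ is analytic in the sector $|t|\le\tau/2$ (nonvanishing of $h(e^{-w},0)$ there is recorded in \cref{lem:two-image}), the second derivative is $\Phi_\rho''(t)=-L''(-\tau+it)$. Taylor's formula with integral remainder then gives
\[
 \Re\Phi_\rho(t)=-\int_0^t(t-v)\,\Re L''(-\tau+iv)\,dv .
\]
So it suffices to show $\Re L''(-\tau+iv)\ge c\tau^{-2}$ for $|v|\le\eta_0\tau$.

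To bound $L''$, use the Poisson representation from the proof of \cref{lem:growing-scale}. With $w=\tau-iv$,
\[
 L(-w)=\tfrac12\log\pi-\tfrac12\log w+\log(1+S(w)).
\]
Differentiating twice in the radial variable gives
\[
 L''(-w)=\frac1{2w^2}+O\bigl(\tau^{-2}e^{-c/\tau}\bigr),
\]
where the error bound uses \eqref{eq:growing-S-complex-derivatives} for $j\le2$ and $|S|\le1/2$. For $|v|\le\eta_0\tau$, write $w=\tau(1-i\epsilon)$ with $|\epsilon|\le\eta_0$. Then
\[
 \Re\frac1{w^2}=\frac{1-\epsilon^2}{\tau^2(1+\epsilon^2)^2}\ge\frac{1}{2\tau^2}
\]
once $\eta_0$ is small, say $\eta_0=1/4$. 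Next increase $\alpha_1\ge\alpha_*$ so that $\tau\le 1/(2\alpha_1)$ by \eqref{eq:growing-tau}, which makes the exponential error at most $\tau^{-2}/8$. This yields $\Re L''\ge\tau^{-2}/8$. Inserting this into the integral gives $\Re\Phi_\rho(t)\le -t^2/(16\tau^2)$.

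For \eqref{eq:growing-scaled-phase-decay}, substitute $t=\tau u/\sqrt d$ with $|u|\le\eta_0\sqrt d$, so that $|t|\le\eta_0\tau$. Then
\[
 |e^{d\Phi_\rho(t)}|=e^{d\Re\Phi_\rho(t)}\le e^{-c\,d\,u^2/d}=e^{-cu^2}.
\]

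The only delicate point is ensuring a single analytic branch of $L$ on the whole segment and uniform control of the $S$-terms. Both are already provided by the sector estimates with $|\sigma|\le\tau/2$ in \cref{lem:growing-scale,lem:two-image}, so no real obstacle is expected.
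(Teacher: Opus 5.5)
Your proof is correct and follows the same scheme as the paper: you lower-bound $\Re L''(-\tau+iv)\gtrsim\tau^{-2}$ on $|v|\le\eta_0\tau$ and integrate $\Re\Phi_\rho''=-\Re L''$ twice from $\Phi_\rho(0)=\Phi_\rho'(0)=0$. The only difference is in how the lower bound is obtained: the paper perturbs the real-axis value $L''(-\tau)\ge\frac38\tau^{-2}$ using the bound $|L^{(3)}|\le C\tau^{-3}$, which leaves $\eta_0$ implicit, whereas you evaluate $L''(-w)=\frac1{2w^2}+O(\tau^{-2}e^{-c/\tau})$ directly on the segment and so get the explicit width $\eta_0=1/4$.
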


\begin{proof}
From the differentiated Poisson series one has, uniformly for
$|t|\le\tau/2$,
\[
 |L^{(3)}(-\tau+it)|\le C\tau^{-3}.
\]
By \eqref{eq:growing-cumulants}, after increasing $\alpha_1$,
$L''(-\tau)\ge(3/8)\tau^{-2}$.  Hence for
$|t|\le\eta_0\tau$, with $\eta_0$ fixed sufficiently small,
\[
 \Re L''(-\tau+it)\ge\frac1{4\tau^2}.
\]
Put $F(t)=\Re\Phi_\rho(t)$.  Then
\[
 F''(t)=-\Re L''(-\tau+it)\le-\frac1{4\tau^2},
 \qquad F(0)=F'(0)=0.
\]
For $0\le t\le\eta_0\tau$,
\[
 F'(t)=\int_0^tF''(s)\,ds\le-\frac{t}{4\tau^2},
 \qquad
 F(t)=\int_0^tF'(r)\,dr\le-\frac{t^2}{8\tau^2}.
\]
The same inequality for negative $t$ follows by integrating from $t$ to
$0$ (equivalently, from the evenness of $F$).  This proves
\eqref{eq:growing-phase-decay}.  With $t=\tau u/\sqrt d$ it gives
$d\Re\Phi_\rho(t)\le-cu^2$, and hence
\eqref{eq:growing-scaled-phase-decay}.
\end{proof}

In the growing range, set
\[
 t=\frac{\tau u}{\sqrt d}.
\]

\Needspace{5\baselineskip}
\begin{lemma}[growing central expansion]
\label{lem:growing-central}
For every fixed $A>0$ there are $\alpha_1,d_0,C>0$ such that whenever
$d\ge d_0$ and
\[
 \alpha_1\le\frac nd\le Ad,
\]
then, for the central arc $E_0=\{|t|\le\eta_0\tau\}$,
\begin{align}
 \left|\frac{J_+(\xi;E_0)}{J_+(0;E_0)}
 -\Gamma_{d,\rho}(\xi)\right|&\le Cd^{-1},
 \label{eq:growing-central-plus}\\
 \left|\frac{J_-(\xi;E_0)}{J_-(0;E_0)}
 -\Gamma_{d,\rho}(\xi)\right|&\le Cd^{-1}.
 \label{eq:growing-central-minus}
\end{align}
Moreover
\begin{align}
 J_+(0;E_0)
 &=\frac1{2\pi\sqrt d}
 \left\{
 \frac{\tau}{1-e^{-\tau}}\sqrt{\frac{2\pi}{q_g(\tau)}}
 +O_A(d^{-1})\right\},
 \label{eq:growing-base-plus-explicit}\\
 J_-(0;E_0)
 &=\frac{\tau}{2\pi\sqrt d}
 \left\{
 \frac1{1+e^{-\tau}}\sqrt{\frac{2\pi}{q_g(\tau)}}
 +O_A(d^{-1})\right\}.
 \label{eq:growing-base-minus-explicit}
\end{align}
Consequently
\begin{equation}
 |J_+(0;E_0)|\ge c d^{-1/2},
 \qquad
 |J_-(0;E_0)|\ge c\tau d^{-1/2}.
 \label{eq:growing-base-lower}
\end{equation}
Finally
\begin{equation}
 |\Gamma_{d,\rho}(\boldsymbol\omega)|\le e^{-cd/\tau}.
 \label{eq:growing-cross-saddle}
\end{equation}
\end{lemma}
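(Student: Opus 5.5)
The plan is to reduce everything to \cref{lem:symmetric-saddle} with $N=d$, exactly as in the small and critical ranges, now using the growing-range scale $t=\tau u/\sqrt d$. On $E_0=\{|t|\le\eta_0\tau\}$ the local factorization \eqref{eq:local-saddle-factorization} is available, because \cref{lem:two-image} gives nonvanishing of $h(\rho e^{it},x)$ for $|t|\le\tau/2$. Changing variables gives
\[
 J_\pm(\xi;E_0)=\frac{\tau}{2\pi\sqrt d}\,\frac{1}{1\mp\rho}\int_{-\eta_0\sqrt d}^{\eta_0\sqrt d}e^{\Psi_d(u)}A_{\pm,d}(u)P_\rho(\tau u/\sqrt d,\xi)\,du ,
\]
with $\Psi_d(u)=d\Phi_\rho(\tau u/\sqrt d)$ and $A_{\pm,d}(u)=(1\mp\rho)/(1\mp\rho e^{i\tau u/\sqrt d})$. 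We then check the hypotheses of \cref{lem:symmetric-saddle} one at a time with $\eta=\eta_0$.

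\emph{Phase.} Taylor expand $L$ at $s=-\tau$ along the imaginary direction. Using \eqref{eq:cumulant-derivative-identity} and $d\mu=n$,
\[
 \Psi_d(u)=-\tfrac{q_g}{2}u^2-\tfrac{i\tau^3L'''}{6\sqrt d}u^3+\tfrac{\tau^4L^{(4)}}{24d}u^4+R(u).
\]
Here $q_g\in[3/8,5/8]$ by \eqref{eq:growing-exact-quadratic}, and $b_3,b_4$ are bounded by \eqref{eq:growing-cumulants}. The remainder satisfies $|R|\le C|u|^5d^{-3/2}$ by \eqref{eq:growing-complex-fifth}, since $d\,\tau^5\tau^{-5}|u|^5d^{-5/2}=|u|^5d^{-3/2}$. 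The decay hypothesis \eqref{eq:symmetric-phase-decay} is \eqref{eq:growing-scaled-phase-decay}.

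\emph{Product amplitude.} The bounds on $P$ and its derivatives are \cref{lem:growing-product-derivatives} with $E=\mathcal E_\tau$; note $P(0,\xi)=\Gamma_{d,\rho}(\xi)$ and $P(u,0)=1$.

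\emph{Pole factors.} This is the one genuinely new point. For $A_{+,d}$, the pole $1-\rho e^{it}$ has size $\asymp\tau$ near $t=0$ because $|t|\le\eta_0\tau\le\tau/2$. Hence $|1-\rho e^{it}|\ge c\tau$ and $A_{+,d}$ is bounded. Each $u$-derivative produces a factor $\rho\tau/\sqrt d$ divided by $|1-\rho e^{it}|\gtrsim\tau$, so $|\partial_u^rA_{+,d}|\le C d^{-r/2}$. For $A_{-,d}$ the denominator is $\ge1$, so the bounds are immediate.

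\cref{lem:symmetric-saddle} then yields the ratio bounds \eqref{eq:growing-central-plus}--\eqref{eq:growing-central-minus}, and $I(0)=\sqrt{2\pi/q_g}+O(d^{-1})$. Multiplying by $\tau/(1\mp e^{-\tau})$ gives \eqref{eq:growing-base-plus-explicit} and \eqref{eq:growing-base-minus-explicit}. In the plus case, $\tau/(1-e^{-\tau})\in[1,2]$ is bounded, so the $O(d^{-1})$ error is not amplified. The lower bounds \eqref{eq:growing-base-lower} follow once $d_0$ is large. The upper constraint $n/d\le Ad$ only ensures $\tau\gtrsim 1/(Ad)$, so that $\tau\le\tau_0$ whenever $\alpha_1$ is large. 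The constants in \cref{lem:two-image,lem:growing-scale} do not depend on $\tau$ otherwise, so the range constraint seems to enter only harmlessly. The main thing to watch is therefore the uniformity of the pole derivative bounds as $\tau\to0$, which the computation above handles.

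\emph{Cross saddle.} Evaluating \eqref{eq:growing-one-coordinate-contract} at real $w=\tau$, $x=1/2$ gives $|\phi_\rho(1/2)|\le e^{-c/(4\tau)}$. Taking the product over the $d$ coordinates gives \eqref{eq:growing-cross-saddle}.
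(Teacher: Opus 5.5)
Your proposal is correct and follows the paper's proof essentially step by step: the same rescaling $t=\tau u/\sqrt d$, the same verification of \cref{lem:symmetric-saddle} with $N=d$, $q=q_g(\tau)$, $E=\mathcal E_\tau$, $\eta=\eta_0\le 1/2$, and the same bookkeeping of the prefactors $\tau/(1\mp e^{-\tau})$ to obtain the base formulas and lower bounds. Two small differences: you derive the cross-saddle bound from \eqref{eq:growing-one-coordinate-contract} at $x=1/2$, which avoids the $C^d$ prefactor the paper absorbs after using Poisson summation directly; and note that $\tau\le\tau_0$ is forced by the lower bound $n/d\ge\alpha_1$, not by the upper bound $n/d\le Ad$, which this lemma does not use at all.
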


\begin{proof}
Put $t=\tau u/\sqrt d$.  For $|u|\le\eta_0\sqrt d$ one has
$|t|\le\tau/2$.  Taylor's theorem along the segment
$[-\tau,-\tau+it]$, together with
\eqref{eq:growing-complex-fifth}, gives
\begin{align*}
 &d\left|L(-\tau+it)-L(-\tau)-itL'(-\tau)
 -\frac{(it)^2}{2}L''(-\tau)
 -\frac{(it)^3}{6}L^{(3)}(-\tau)
 -\frac{(it)^4}{24}L^{(4)}(-\tau)\right|\\
 &\qquad\le C d\tau^{-5}|t|^5
 \le C\frac{|u|^5}{d^{3/2}}.
\end{align*}
Define the exact scaled coefficients
\begin{equation}
 q_g(\tau)=\tau^2L''(-\tau),\qquad
 b_g(\tau)=-\frac{\tau^3L^{(3)}(-\tau)}6,\qquad
 c_g(\tau)=\frac{\tau^4L^{(4)}(-\tau)}{24}.
 \label{eq:growing-exact-coefficients}
\end{equation}
Then the preceding Taylor estimate gives
\begin{equation}
 d\Phi_\rho(\tau u/\sqrt d)
 =-\frac{q_g(\tau)}2u^2
 +\frac{ib_g(\tau)}{\sqrt d}u^3
 +\frac{c_g(\tau)}d u^4
 +O\!\left(\frac{|u|^5}{d^{3/2}}\right).
 \label{eq:growing-central-phase}
\end{equation}
We retain the exact coefficient $q_g(\tau)$ in the Gaussian term.  By
\eqref{eq:growing-cumulants}--\eqref{eq:growing-exact-quadratic},
$q_g\in[3/8,5/8]$ and $|b_g|+|c_g|\le C$.

Set
\[
 \Psi_d(u)=d\Phi_\rho(\tau u/\sqrt d),
 \qquad
 A_{+,d}(u)=\frac{1-e^{-\tau}}
 {1-e^{-\tau+i\tau u/\sqrt d}},
 \qquad
 A_{-,d}(u)=\frac{1+e^{-\tau}}
 {1+e^{-\tau+i\tau u/\sqrt d}}.
\]
For $r=1,2$, direct differentiation gives
\begin{equation}
 |A_{\pm,d}(u)|\le C,
 \qquad
 |\partial_u^rA_{\pm,d}(u)|\le C_r d^{-r/2}.
 \label{eq:growing-pole-derivatives}
\end{equation}
Indeed $1-e^{-\tau}\simeq\tau$, $1+e^{-\tau}\simeq1$, and
$|u|\le\eta_0\sqrt d$ with $\eta_0\le1/2$.  The phase bound
\eqref{eq:growing-scaled-phase-decay}, the product estimates
\eqref{eq:growing-product-contract-central}--\eqref{eq:growing-product-second},
\eqref{eq:growing-central-phase}, and \eqref{eq:growing-pole-derivatives}
therefore verify \cref{lem:symmetric-saddle} with
\[
 N=d,\qquad q=q_g(\tau),\qquad b_3=b_g(\tau),\qquad b_4=c_g(\tau),
 \qquad G(\xi)=\Gamma_{d,\rho}(\xi),
 \quad E(\xi)=\mathcal E_\tau(\xi).
\]
Since
\begin{align*}
 J_+(\xi;E_0)
 &=\frac{1}{2\pi\sqrt d}\frac{\tau}{1-e^{-\tau}}
   \int_{-\eta_0\sqrt d}^{\eta_0\sqrt d}
   e^{\Psi_d(u)}A_{+,d}(u)P_\rho(\tau u/\sqrt d,\xi)\,du,\\
 J_-(\xi;E_0)
 &=\frac{\tau}{2\pi\sqrt d}\frac1{1+e^{-\tau}}
   \int_{-\eta_0\sqrt d}^{\eta_0\sqrt d}
   e^{\Psi_d(u)}A_{-,d}(u)P_\rho(\tau u/\sqrt d,\xi)\,du,
\end{align*}
the ratio conclusion \eqref{eq:symmetric-ratio} proves
\eqref{eq:growing-central-plus}--\eqref{eq:growing-central-minus}.  The base
conclusion \eqref{eq:symmetric-base} gives precisely
\eqref{eq:growing-base-plus-explicit}--\eqref{eq:growing-base-minus-explicit}.
Since
\[
 1\le\frac{\tau}{1-e^{-\tau}}\le2,
 \qquad \frac12\le\frac1{1+e^{-\tau}}\le1
\]
for small $\tau$, the leading terms dominate the $O_A(d^{-1})$ errors for
large $d$, which proves \eqref{eq:growing-base-lower}.

At $x=1/2$, \eqref{eq:growing-poisson} gives
$|h(e^{-\tau},1/2)|/h(e^{-\tau})\le Ce^{-c/\tau}$, and multiplication over
the $d$ coordinates proves \eqref{eq:growing-cross-saddle}.
\end{proof}
\section{Arithmetic separation of the growing remote arcs and the \texorpdfstring{$\ell^2$}{l2} theorem}
\label{sec:arithmetic-remote}

It remains to control the contour away from $t=0$ and $t=\pi$.  We first
prove a one-dimensional estimate uniform in the frequency and then take its
$d$-fold product.

Define
\[
 \Theta_\tau(t,x)=h(e^{-\tau+it},x)
 =\sum_{k\in\mathbb Z}e^{-\tau k^2}e^{itk^2}e^{2\pi ikx},
 \qquad H_\tau=\Theta_\tau(0,0).
\]
Poisson summation at $t=x=0$ gives the useful lower bound
\begin{equation}
 H_\tau=\sqrt{\frac\pi\tau}
 \sum_{m\in\mathbb Z}e^{-\pi^2m^2/\tau}
 \ge\sqrt{\frac\pi\tau}.
 \label{eq:H-lower}
\end{equation}
Write
\[
 \Delta(t)=\operatorname{dist}(t,\pi\mathbb Z).
\]

\begin{lemma}[quadratic Weyl estimate followed by Abel summation]
\label{lem:weyl-abel}
There is an absolute constant $C$ with the following property.  Suppose
$p,q\in\mathbb Z$, $q\ge1$, $(p,q)=1$, and
\[
 \left|\frac t\pi-\frac pq\right|\le q^{-2}.
\]
Then for every $B>0$, $0<\tau\le1$, and $x\in\mathbb R$,
\begin{equation}
 |\Theta_\tau(t,x)|
 \le
 2C\left(\frac{B/\sqrt\tau+1}{\sqrt q}+\sqrt q\right)+3
 +\frac{2e^{-B^2}}{B\sqrt\tau}.
 \label{eq:weyl-abel-raw}
\end{equation}
Consequently
\begin{align}
 \frac{|\Theta_\tau(t,x)|}{H_\tau}
 \le{}&
 \frac{2CB}{\sqrt{\pi q}}
 +\frac{2C\sqrt\tau}{\sqrt{\pi q}}
 +2C\sqrt{\frac{q\tau}{\pi}}
 +3\sqrt{\frac\tau\pi}
 +\frac{2e^{-B^2}}{B\sqrt\pi}.
 \label{eq:weyl-abel-normalized}
\end{align}
\end{lemma}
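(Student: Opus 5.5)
Write $\Theta_\tau(t,x)=\sum_k e^{-\tau k^2}e(\alpha k^2+xk)$ with $\alpha=t/(2\pi)$. The plan is to truncate the theta series at $|k|\le L:=\lceil B/\sqrt\tau\rceil$ and estimate the tail trivially. On the main part we apply Abel summation against the decreasing weights $e^{-\tau k^2}$, using \eqref{eq:weyl-sum} to bound the partial sums uniformly.

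\emph{Splitting and the tail.} Separate the term $k=0$, which contributes $1$. Pair $k\ge1$ with $k\to -k$, writing the negative-$k$ part as $\sum_{k\ge1}e^{-\tau k^2}e(\alpha k^2-xk)$. It therefore suffices to bound each one-sided sum
\[
\sum_{k\ge1}e^{-\tau k^2}e(\alpha k^2\pm xk)
\]
by $C(L/\sqrt Q+\sqrt Q)+1+e^{-B^2}/(B\sqrt\tau)$, where $Q$ is an admissible denominator. The tail $k>L$ is bounded by
\[
\int_{L-1}^\infty e^{-\tau u^2}\,du\quad\text{plus one boundary term.}
\]
The Gaussian tail estimate $\int_{y}^\infty e^{-s^2}\,ds\le e^{-y^2}/(2y)$, after substituting $s=\sqrt\tau u$, gives $e^{-B^2}/(2B\sqrt\tau)$ up to the harmless boundary term. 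The stray unit terms account for the constant $3$ in \eqref{eq:weyl-abel-raw}.

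\emph{Abel summation.} Let $S_K=\sum_{k=1}^K e(\alpha k^2\pm xk)$. Summation by parts gives
\[
\sum_{k\le L}w_kS'_k=w_LS_L+\sum_{k<L}(w_k-w_{k+1})S_k, \qquad w_k=e^{-\tau k^2}.
\]
Since $w_k$ is decreasing in $k$ and $w_1\le1$, the total variation is at most $1$. Hence the sum is at most $2\max_{K\le L}|S_K|$, which explains the factor $2C$ in \eqref{eq:weyl-abel-raw}. For each $K\le L$, \eqref{eq:weyl-sum} gives $|S_K|\le C_{\mathrm W}(L/\sqrt Q+\sqrt Q)$, and $L\le B/\sqrt\tau+1$.

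\emph{The denominator conversion (the delicate step).} We have $\alpha=t/(2\pi)=\tfrac12(t/\pi)$, and $|t/\pi-p/q|\le q^{-2}$. So $|\alpha-p/(2q)|\le q^{-2}/2$. Write $p/(2q)=a/Q$ in lowest terms. Then $Q\in\{q,2q\}$: $Q=2q$ when $p$ is odd, and $Q=q$ when $p$ is even, since then $q$ is odd. In both cases
\[
|\alpha-a/Q|\le \tfrac12 q^{-2}\le 2Q^{-2}<3Q^{-2},
\]
so the hypothesis of \eqref{eq:weyl-sum} with $B'=3$ holds. Since $q\le Q\le2q$, we get $L/\sqrt Q\le L/\sqrt q$ and $\sqrt Q\le\sqrt2\sqrt q$. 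The factor $\sqrt2$ is absorbed into the absolute constant $C$; the stated form allows $C$ to be renamed. This parity bookkeeping is the only point where I expect care is needed, namely keeping track of which of $q$ or $2q$ appears and why the approximation constant survives.

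\emph{Normalization.} Combining the two signs gives \eqref{eq:weyl-abel-raw}. Dividing by $H_\tau\ge\sqrt{\pi/\tau}$, from \eqref{eq:H-lower}, multiplies each term by $\sqrt{\tau/\pi}$:
\begin{itemize}
\item $2CB/(\sqrt\tau\sqrt q)$ becomes $2CB/\sqrt{\pi q}$;
\item $2C/\sqrt q$ becomes $2C\sqrt\tau/\sqrt{\pi q}$;
\item $2C\sqrt q$ becomes $2C\sqrt{q\tau/\pi}$;
\item $3$ becomes $3\sqrt{\tau/\pi}$;
\item the tail term becomes $2e^{-B^2}/(B\sqrt\pi)$.
\end{itemize}
This is exactly \eqref{eq:weyl-abel-normalized}.
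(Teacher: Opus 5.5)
Your proposal follows the paper's proof essentially step for step: truncation at $\lceil B/\sqrt\tau\rceil$, Abel summation against the decreasing weights $e^{-\tau k^2}$, the conversion of $p/(2q)$ to lowest terms $a/Q$ with $Q\in\{q,2q\}$ so that \eqref{eq:weyl-sum} applies with $B'=3$, and division by \eqref{eq:H-lower}. Two small tidy-ups, both harmless since $C$ is renamed and the explicit terms have slack. First, the tail is simply $\sum_{k>L}e^{-\tau k^2}\le\int_L^\infty e^{-\tau u^2}\,du$ with no boundary term; an integral started at $L-1$ would not give $e^{-B^2}/(2B\sqrt\tau)$. Second, Abel summation actually yields $w_1\max_{K\le L}|S_K|\le\max_{K\le L}|S_K|$, so the factor $2$ in $2C$ comes from the two half-lines rather than from the summation by parts.
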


\begin{proof}
Let
\[
 S_M(t,x)=\sum_{1\le k\le M}e^{itk^2}e^{2\pi ikx}.
\]
Put $X=t/\pi$, so that $S_M(t,x)=\sum_{1\le k\le M}e(\alpha k^2+xk)$ with
$\alpha=X/2$.  Write $p/(2q)$ in lowest terms as $a/Q$: if $p$ is even,
then $q$ is odd, $a=p/2$, and $Q=q$; if $p$ is odd, then $(p,2q)=1$,
$a=p$, and $Q=2q$.  In both cases $q\le Q\le2q$ and
\[
 \left|\alpha-\frac aQ\right|
 =\frac12\left|X-\frac pq\right|
 \le\frac1{2q^2}\le2Q^{-2}<3Q^{-2}.
\]
Hence \eqref{eq:weyl-sum}, applied with $\theta=x$ and $L=M$ for every
$1\le M\le N$, gives
\[
 |S_M(t,x)|\le C_{\mathrm W}\left(\frac M{\sqrt Q}+\sqrt Q\right)
 \le C\left(\frac{M+1}{\sqrt q}+\sqrt q\right)
\]
with an absolute constant $C$, because $\sqrt q\le\sqrt Q\le\sqrt{2q}$.
For $N=\lceil B/\sqrt\tau\rceil$, Abel summation and the monotonicity of
$e^{-\tau k^2}$ imply
\[
 \left|\sum_{k=1}^Ne^{-\tau k^2}e^{itk^2}e^{2\pi ikx}\right|
 \le \max_{M\le N}|S_M(t,x)|.
\]
The same estimate applies to negative $k$.  The Gaussian tail satisfies
\[
 \sum_{k>N}e^{-\tau k^2}
 \le\int_N^\infty e^{-\tau u^2}\,du
 \le\frac{e^{-\tau N^2}}{2\tau N}
 \le\frac{e^{-B^2}}{2B\sqrt\tau}.
\]
Since $N+1\le2(B/\sqrt\tau+1)$, the term $k=0$, the two truncated
half-lines, and the two tails give \eqref{eq:weyl-abel-raw} after enlarging
the absolute constant $C$.  Dividing by \eqref{eq:H-lower} gives
\eqref{eq:weyl-abel-normalized}.
\end{proof}

For bounded rational denominators we use a direct residue-class
computation.  For coprime $a,Q$, define the quadratic Gauss sums
\[
 G_Q(a,b)=\sum_{r=0}^{Q-1}e\!\left(\frac{ar^2+br}{Q}\right).
\]

\begin{lemma}[Gauss sums and the rational cusp bound]
\label{lem:rational-cusp}
\emph{(i)}  If $(a,Q)=1$, then
\begin{equation}
 |G_Q(a,b)|\le
 \begin{cases}
 \sqrt Q,&Q\ \text{odd},\\
 \sqrt{2Q},&Q\ \text{even}.
 \end{cases}
 \label{eq:gauss-sum-bound}
\end{equation}
In particular, for $Q\ge3$,
\begin{equation}
 \max_b\frac{|G_Q(a,b)|}{Q}\le\frac1{\sqrt2}.
 \label{eq:gauss-height-gap}
\end{equation}

\emph{(ii)}  Let $(a,Q)=1$, $Q\ge1$, $0<\tau\le1$, and write
\[
 t=\frac{2\pi a}{Q}+\beta,
 \qquad w=\tau-i\beta,
 \qquad |w|=(\tau^2+\beta^2)^{1/2}.
\]
If $Q\ge3$, then, uniformly in $x$,
\begin{equation}
 \frac{|\Theta_\tau(t,x)|}{H_\tau}
 \le
 \sqrt{\frac{\tau}{2|w|}}
 +\sqrt2\,Q\sqrt{\frac{|w|}{\pi}}.
 \label{eq:cusp-qge3}
\end{equation}
For $Q=1$ or $Q=2$,
\begin{equation}
 \frac{|\Theta_\tau(t,x)|}{H_\tau}
 \le
 \sqrt{\frac{\tau}{|w|}}
 +4\sqrt{\frac{|w|}{\pi}}.
 \label{eq:cusp-exceptional}
\end{equation}
\end{lemma}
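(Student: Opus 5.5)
The plan is to prove (i) by the classical squaring trick. Then, for (ii), split the theta series into residue classes modulo $Q$ and apply Poisson summation to each class. This turns $\Theta_\tau$ into a superposition of dual Gaussians with Gauss-sum coefficients. Dividing by the lower bound \eqref{eq:H-lower} then gives both cusp bounds.

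\emph{Step 1 (Gauss sums).} Write $|G_Q(a,b)|^2=\sum_{r,s}e\bigl((a(r^2-s^2)+b(r-s))/Q\bigr)$ and substitute $r=s+h$ modulo $Q$. This gives
\[
 |G_Q(a,b)|^2=\sum_{h\bmod Q}e\!\left(\frac{ah^2+bh}{Q}\right)\sum_{s\bmod Q}e\!\left(\frac{2ahs}{Q}\right).
\]
The inner sum equals $Q$ if $Q\mid 2ah$ and vanishes otherwise. Since $(a,Q)=1$, the condition is $Q\mid 2h$.
\begin{itemize}
\item For $Q$ odd only $h=0$ survives, so $|G_Q|^2=Q$.
\item For $Q$ even the values $h=0$ and $h=Q/2$ survive, so $|G_Q|^2\le 2Q$.
\end{itemize}
This is \eqref{eq:gauss-sum-bound}. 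For \eqref{eq:gauss-height-gap}: an odd $Q\ge3$ gives $Q^{-1/2}\le 3^{-1/2}$, and an even $Q\ge4$ gives $\sqrt{2/Q}\le 2^{-1/2}$.

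\emph{Step 2 (residue-class Poisson formula).} With $t=2\pi a/Q+\beta$ we have $e^{itk^2}=e(ak^2/Q)e^{i\beta k^2}$, and $e(ak^2/Q)$ depends only on $r\equiv k\pmod Q$. So
\[
 \Theta_\tau(t,x)=\sum_{r=0}^{Q-1}e\!\left(\frac{ar^2}{Q}\right)\sum_{m\in\mathbb Z}e^{-w(r+Qm)^2}e\bigl((r+Qm)x\bigr),\qquad w=\tau-i\beta .
\]
Apply Poisson summation in $m$ to each inner sum; this is legitimate since $\Re w=\tau>0$. Then recombine the $r$-sums. The expected outcome is
\[
 \Theta_\tau(t,x)=\frac1Q\sqrt{\frac\pi w}\sum_{\ell\in\mathbb Z}G_Q(a,\ell)\,e^{-\pi^2(x+\ell/Q)^2/w},
\]
possibly with a sign convention $\ell\mapsto-\ell$ in $G_Q$, which is harmless because we only use $\max_b|G_Q(a,b)|$. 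Keeping the linear twist $e(kx)$ and the normalization $1/Q$ straight is the only delicate bookkeeping. I will check the formula against \eqref{eq:growing-poisson} in the case $Q=1$.

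\emph{Step 3 (summing the dual Gaussians).} Use $|e^{-\pi^2y^2/w}|=e^{-\pi^2y^2\tau/|w|^2}$, since $\Re(1/w)=\tau/|w|^2$. The points $x+\ell/Q$ have spacing $1/Q$ and the Gaussian is unimodal, so comparing the sum with an integral gives
\[
 \sum_\ell e^{-\pi^2\tau(x+\ell/Q)^2/|w|^2}\le 1+Q\int_{\mathbb R}e^{-\pi^2\tau y^2/|w|^2}dy=1+\frac{Q|w|}{\sqrt{\pi\tau}}.
\]
Combine this with $|\sqrt{\pi/w}|=\sqrt{\pi/|w|}$ and $H_\tau\ge\sqrt{\pi/\tau}$. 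The result is
\[
 \frac{|\Theta_\tau(t,x)|}{H_\tau}\le\frac{\max_b|G_Q(a,b)|}{Q}\left(\sqrt{\frac\tau{|w|}}+Q\sqrt{\frac{|w|}{\pi}}\right).
\]
For $Q\ge3$, \eqref{eq:gauss-height-gap} gives $\sqrt{\tau/(2|w|)}+Q\sqrt{|w|/(2\pi)}$, which is stronger than \eqref{eq:cusp-qge3}. For $Q\in\{1,2\}$ we use the trivial bound $|G_Q|\le Q$. This gives $\sqrt{\tau/|w|}+Q\sqrt{|w|/\pi}\le\sqrt{\tau/|w|}+2\sqrt{|w|/\pi}$, which implies \eqref{eq:cusp-exceptional}.

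The main obstacle is Step 2: carrying out Poisson summation on the arithmetic progression with the frequency twist, and confirming that the coefficients are exactly Gauss sums $G_Q(a,\cdot)$ with the factor $1/Q$. The estimates after that are routine.
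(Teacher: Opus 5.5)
Your proposal is correct and follows essentially the same route as the paper: the squaring trick for (i), a Gauss-sum-weighted dual Gaussian expansion of $\Theta_\tau(t,x)$, a comparison of the lattice sum with an integral, and division by \eqref{eq:H-lower}. The differences are cosmetic. The paper obtains the expansion by finite Fourier inversion of $k\mapsto e(ak^2/Q)$ followed by the already proved formula \eqref{eq:growing-poisson} at the shifted points $x+b/Q$, which avoids a fresh Poisson computation on progressions. Your unimodal bound $1+Q|w|/\sqrt{\pi\tau}$ is a factor of two sharper in the second term than the paper's $1+2Q|w|/\sqrt{\pi\tau}$, so you obtain slightly stronger constants than \eqref{eq:cusp-qge3} and \eqref{eq:cusp-exceptional}.
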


\begin{proof}
For (i), we square the modulus and write $r=s+u$ modulo $Q$:
\begin{align*}
 |G_Q(a,b)|^2
 &=\sum_{u\bmod Q}e((au^2+bu)/Q)
   \sum_{s\bmod Q}e(2aus/Q).
\end{align*}
The inner sum is zero unless $Q\mid2au$, in which case its modulus is $Q$.
If $Q$ is odd, $(2a,Q)=1$, so only $u=0$ contributes and
$|G_Q(a,b)|^2=Q$.  If $Q$ is even, the congruence $Q\mid2au$ has at most
two solutions modulo $Q$, hence $|G_Q(a,b)|^2\le2Q$.  For odd $Q\ge3$,
$Q^{-1/2}\le3^{-1/2}<2^{-1/2}$; for even $Q\ge4$ the second bound gives
\eqref{eq:gauss-height-gap}.

For (ii), observe that the coefficient $k\mapsto e(ak^2/Q)$ is
$Q$-periodic.  Fourier inversion on $\mathbb Z/Q\mathbb Z$ gives
\[
 e(ak^2/Q)=\frac1Q\sum_{b=0}^{Q-1}G_Q(a,-b)e(bk/Q).
\]
Therefore, by the one-dimensional Poisson formula,
\begin{align*}
 \Theta_\tau(t,x)
 &=\frac1Q\sum_{b=0}^{Q-1}G_Q(a,-b)
 \sum_{k\in\mathbb Z}e^{-wk^2}e^{2\pi ik(x+b/Q)}\\
 &=\frac1Q\sqrt{\frac\pi w}
 \sum_{b=0}^{Q-1}G_Q(a,-b)
 \sum_{m\in\mathbb Z}
 e^{-\pi^2(m-x-b/Q)^2/w}.
\end{align*}
Taking absolute values and combining the $Q$ shifted lattices into a
single sum gives
\begin{equation}
 |\Theta_\tau(t,x)|
 \le \frac{G_*}{Q}\sqrt{\frac\pi{|w|}}
 \sum_{j\in\mathbb Z}
 \exp\!\left(-\frac{\pi^2\tau}{|w|^2}(j/Q-x)^2\right),
 \label{eq:cusp-poisson-bound}
\end{equation}
where $G_*=\max_b|G_Q(a,b)|$.  Choose an integer $j_0$ with
$|j_0/Q-x|\le(2Q)^{-1}$; comparing the tail sum with an integral then
gives
\[
 \sum_{j\in\mathbb Z}
 e^{-\pi^2\tau(j/Q-x)^2/|w|^2}
 \le1+\frac{2Q|w|}{\sqrt{\pi\tau}}.
\]
Using \eqref{eq:H-lower} in \eqref{eq:cusp-poisson-bound} therefore yields
\begin{equation}
 \frac{|\Theta_\tau(t,x)|}{H_\tau}
 \le \frac{G_*}{Q}\sqrt{\frac\tau{|w|}}
 +2G_*\sqrt{\frac{|w|}{\pi}}.
 \label{eq:cusp-master}
\end{equation}
For $Q\ge3$, \eqref{eq:gauss-sum-bound}--\eqref{eq:gauss-height-gap}
give \eqref{eq:cusp-qge3}.  For $Q=1,2$, the explicit sums give
$G_*/Q\le1$ and $G_*\le2$.  Hence \eqref{eq:cusp-exceptional} follows
directly from \eqref{eq:cusp-master}.
\end{proof}

\Needspace{5\baselineskip}
\begin{proposition}[uniform scalar remote gap]
\label{prop:uniform-remote-gap}
For every $C_0>0$ there exist $\eta\in(0,1/4)$ and
$\tau_0\in(0,1/4]$ such that
\begin{equation}
 \sup_{x\in\mathbb R}
 \frac{|\Theta_\tau(t,x)|}{H_\tau}
 \le1-\eta
 \label{eq:uniform-remote-gap}
\end{equation}
whenever $0<\tau\le\tau_0$ and $\Delta(t)\ge C_0\tau$.
\end{proposition}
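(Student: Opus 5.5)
Fix $C_0>0$ and write $X=t/\pi$, so that $\Delta(t)\ge C_0\tau$ says $\operatorname{dist}(X,\mathbb Z)\ge C_0\tau/\pi$. The plan is to apply Dirichlet's theorem with a parameter $Q_*$, chosen large depending only on $C_0$ and an absolute gap. This gives coprime $p,q$ with $1\le q\le Q_*$ and $|X-p/q|\le 1/(qQ_*)\le q^{-2}$. We then split into a large-denominator case and a bounded-denominator case and show in each that the normalized theta quotient is at most $1-\eta$.

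\textbf{Large denominators.} Suppose $q\ge Q_1$, where $Q_1$ will be fixed shortly. Apply \eqref{eq:weyl-abel-normalized} from \cref{lem:weyl-abel}, first fixing $B$ so that $2e^{-B^2}/(B\sqrt\pi)\le 1/10$. Then fix $Q_1$ so that $2CB/\sqrt{\pi Q_1}\le1/10$; the term $2C\sqrt\tau/\sqrt{\pi q}$ is then also small. The remaining terms are $2C\sqrt{q\tau/\pi}+3\sqrt{\tau/\pi}$. Since $q\le Q_*$, they are at most $1/10$ provided $\tau_0\le c/Q_*$. The quotient is then at most $1/2$.

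\textbf{Bounded denominators.} Suppose $q<Q_1$. Rewrite $t=2\pi a/Q+\beta$ with $a/Q=p/(2q)$ in lowest terms, so $Q\in\{q,2q\}$ and $Q\le 2Q_1$. Here $|\beta|=\pi|X-p/q|\le \pi/(qQ_*)$. Put $|w|=(\tau^2+\beta^2)^{1/2}$.

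First let $Q\ge3$, and use \eqref{eq:cusp-qge3}. Its first term is at most $1/\sqrt2$. Its second term is $\sqrt2 Q\sqrt{|w|/\pi}\le C Q_1(\sqrt{\tau_0}+Q_*^{-1/2})$. This is small once $Q_*$ is chosen large relative to $Q_1$ and $\tau_0$ small. Note that $Q_1$ depends only on absolute constants, so this ordering of choices is consistent. The quotient is then at most $1/\sqrt2+1/10<1$.

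Now let $Q\in\{1,2\}$. Then $a/Q\in\{0,1/2\}$ modulo $1$, so $t/\pi$ lies within $|\beta|/\pi$ of an integer, which forces $\Delta(t)=|\beta|$ up to the obvious identification. (If $q=1$, then $p/q$ is an integer and $Q\in\{1,2\}$; if $q=2$, then $p$ is odd and $Q=4\ge3$, which falls in the previous case.) The hypothesis then gives $|\beta|\ge C_0\tau$, hence $\tau/|w|\le(1+C_0^2)^{-1/2}$. Apply \eqref{eq:cusp-exceptional}: the first term is at most $(1+C_0^2)^{-1/4}<1$. The second term, $4\sqrt{|w|/\pi}$, is small because $|\beta|\le\pi/Q_*$ and $\tau\le\tau_0$. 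The quotient is therefore at most $1-\eta$ with $\eta=\tfrac12\bigl(1-(1+C_0^2)^{-1/4}\bigr)$, after shrinking to ensure $\eta<1/4$.

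The main obstacle is the order of parameter choices: $B$ first, then $Q_1$, then $Q_*$, then $\tau_0$. This order must ensure that the $\sqrt{q\tau}$ loss in the Weyl branch, where $q\le Q_*$, and the $Q\sqrt{|w|}$ loss in the cusp branch are both controlled. The fix is Dirichlet with parameter $Q_*$, which bounds $|\beta|$ by $\pi/Q_*$. Finally take the minimum of the three gaps to obtain $\eta$.
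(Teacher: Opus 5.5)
Your proof is correct and follows the paper's argument: Dirichlet approximation of $t/\pi$, then the Weyl--Abel bound of \cref{lem:weyl-abel} for large denominators, the Gauss-sum cusp bound \eqref{eq:cusp-qge3} for bounded $Q\ge3$, and \eqref{eq:cusp-exceptional} combined with the remote hypothesis (via $\Delta(t)\le|\beta|$) for $Q\in\{1,2\}$. The only difference is bookkeeping. You use a fixed Dirichlet parameter $Q_*$ and then require $\tau_0\lesssim 1/Q_*$, whereas the paper uses the $\tau$-dependent parameter $N_\tau=\lfloor\tau^{-1/2}\rfloor$, which gives $q\tau\le\sqrt\tau$ and $|\beta|\le 2\pi\sqrt\tau$ automatically; both orderings of the constants are consistent.
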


\begin{proof}
We choose the parameters in the order in which they enter the two rational
regimes.  First take $B\ge1$ so large that
\[
 \frac{2e^{-B^2}}{B\sqrt\pi}\le\frac1{32},
\]
and then choose an integer $Q_0\ge4$ such that
\[
 \frac{2CB}{\sqrt{\pi Q_0}}\le\frac1{32},
\]
where $C$ is the constant in \cref{lem:weyl-abel}.

For the bounded-denominator cusps set
\[
 \gamma_0=(1+C_0^2)^{-1/4},
 \qquad \gamma_1=2^{-1/2}.
\]
Both numbers are strictly smaller than $1$.  Choose $\eta>0$ so that
\begin{equation}
 4\eta<\min\{1-\gamma_0,1-\gamma_1,1/4\}.
 \label{eq:remote-eta-choice}
\end{equation}
Next choose $\delta\in(0,\pi/4]$ so small that
\begin{equation}
 \sqrt2(2Q_0)\sqrt{\frac{\sqrt2\delta}{\pi}}
 \le\eta,
 \qquad
 4\sqrt{\frac{\sqrt2\delta}{\pi}}\le\eta.
 \label{eq:delta-choice}
\end{equation}
Finally choose $\tau_0\in(0,1/4]$ so small that $\tau_0\le\delta$,
\begin{equation}
 \frac{2C\sqrt{\tau_0}}{\sqrt\pi}\le\eta,
 \qquad
 2C\sqrt{\frac{\sqrt{\tau_0}}{\pi}}\le\eta,
 \qquad
 3\sqrt{\frac{\tau_0}{\pi}}\le\eta,
 \qquad
 2\pi\sqrt{\tau_0}\le\delta,
 \label{eq:remote-tau-choice}
\end{equation}
and
\[
 N_\tau:=\lfloor\tau^{-1/2}\rfloor\ge Q_0
 \qquad (0<\tau\le\tau_0).
\]

Fix $0<\tau\le\tau_0$ and put $X=t/\pi$.  By Dirichlet's theorem there
are coprime integers $p,q$ with $1\le q\le N_\tau$ and
\begin{equation}
 \left|X-\frac pq\right|
 \le\frac1{qN_\tau}\le q^{-2}.
 \label{eq:dirichlet-strong}
\end{equation}

Suppose first that $q\ge Q_0$.  Since
$q\tau\le N_\tau\tau\le\sqrt\tau$, the five terms on the right-hand side
of \eqref{eq:weyl-abel-normalized} are bounded respectively by
\[
 \frac1{32},\qquad \eta,\qquad \eta,\qquad \eta,\qquad \frac1{32}.
\]
Therefore
\begin{equation}
 \frac{|\Theta_\tau(t,x)|}{H_\tau}
 \le \frac1{16}+3\eta
 <\frac14<1-\eta,
 \label{eq:remote-large-denominator-close}
\end{equation}
uniformly in $x$, where \eqref{eq:remote-eta-choice} gives $\eta<1/16$.

It remains to consider $q<Q_0$.  The reduced fraction $p/q$ can be written
as $2a/Q$ with $(a,Q)=1$ and $Q\le2q$: if $p$ is even, take
$a=p/2$ and $Q=q$; if $p$ is odd, take $a=p$ and $Q=2q$.  Thus
\[
 t=\frac{2\pi a}{Q}+\beta,
 \qquad
 |\beta|=\pi\left|X-\frac pq\right|
 \le\frac{\pi}{N_\tau}
 \le2\pi\sqrt\tau\le\delta.
\]
Here the penultimate inequality uses $\tau\le1/4$, hence
$N_\tau=\lfloor\tau^{-1/2}\rfloor\ge\tfrac12\tau^{-1/2}$.  Since also
$\tau\le\delta$, we have
\begin{equation}
 |w|=(\tau^2+\beta^2)^{1/2}\le\sqrt2\,\delta.
 \label{eq:remote-w-small}
\end{equation}

If $Q\ge3$, then \eqref{eq:cusp-qge3}, $Q<2Q_0$, and
\eqref{eq:delta-choice} give
\[
 \frac{|\Theta_\tau(t,x)|}{H_\tau}
 \le\gamma_1+\eta.
\]
By \eqref{eq:remote-eta-choice},
$\gamma_1+\eta<1-3\eta<1-\eta$.

Finally suppose $Q=1$ or $Q=2$.  Then $2\pi a/Q\in\pi\mathbb Z$.
Because $|\beta|\le\delta\le\pi/4$, this is the unique nearest point of
$\pi\mathbb Z$ to $t=2\pi a/Q+\beta$, and therefore
\[
 \Delta(t)=|\beta|.
\]
The remote condition now gives $|\beta|\ge C_0\tau$.  Hence
\[
 \sqrt{\frac\tau{|w|}}
 =\left(1+(\beta/\tau)^2\right)^{-1/4}
 \le\gamma_0.
\]
Using \eqref{eq:cusp-exceptional}, \eqref{eq:remote-w-small}, and the
second inequality in \eqref{eq:delta-choice}, we obtain
\[
 \frac{|\Theta_\tau(t,x)|}{H_\tau}
 \le\gamma_0+\eta<1-3\eta<1-\eta.
\]
The three cases prove \eqref{eq:uniform-remote-gap}.
\end{proof}

Let $\eta_0$ be the central-width constant from
\cref{lem:growing-phase-decay}.  The scalar gap immediately gives a
$d$-dimensional remote estimate.  Put
\[
 \mathcal R_\tau=\{t\in[-\pi,\pi]:\Delta(t)\ge\eta_0\tau\},
 \qquad T_\pm(\xi)=J_\pm(\xi;\mathcal R_\tau).
\]

\begin{lemma}[growing remote estimates at every algebraic order]
\label{lem:growing-remote}
For every fixed $A>0$ there exist $\alpha_r(A)\ge1$, $d_r(A)\ge1$,
$\eta_A>0$, and $C_A<\infty$ such that, whenever
$d\ge d_r(A)$ and
\[
 \alpha_r(A)d\le n\le Ad^2,
\]
the complements of the two central windows satisfy, uniformly in $\xi$,
\begin{equation}
 \frac{|T_+(\xi)|+|T_-(\xi)|+|T_+(0)|+|T_-(0)|}
 {|J_+(0;E_0)|}
 \le C_A d^{3/2}(1-\eta_A)^d.
 \label{eq:growing-remote-exponential}
\end{equation}
Consequently, for every integer $K\ge1$ there are
$d_{r,K}(A)\ge d_r(A)$ and $C_{A,K}<\infty$ such that, whenever
$d\ge d_{r,K}(A)$ and $\alpha_r(A)d\le n\le Ad^2$,
\begin{equation}
 |T_+(\xi)|+|T_-(\xi)|+|T_+(0)|+|T_-(0)|
 \le C_{A,K}d^{-K}|J_+(0;E_0)|.
 \label{eq:growing-remote-budget}
\end{equation}
\end{lemma}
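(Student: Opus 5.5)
We bound the four remote integrals pointwise on $\mathcal R_\tau$ and then divide by the central lower bound \eqref{eq:growing-base-lower}.

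\textbf{Choice of parameters.} Apply \cref{prop:uniform-remote-gap} with $C_0=\eta_0$, the central-width constant of \cref{lem:growing-phase-decay}. This gives $\eta\in(0,1/4)$ and $\tau_0$. Choose $\alpha_r(A)\ge\max(\alpha_1,\alpha_*)$ so large that \eqref{eq:growing-tau} forces $\tau=\tau(n/d)\le\tau_0$ whenever $n/d\ge\alpha_r(A)$. The upper bound $n\le Ad^2$ together with \eqref{eq:growing-tau} gives
\[
 \tau\ge\frac{1}{4\alpha}\ge\frac1{4Ad}.
\]

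\textbf{Pointwise bound on the integrand.} For $t\in\mathcal R_\tau$ we have $\Delta(t)\ge\eta_0\tau$. Hence, uniformly in $\xi$, including $\xi=0$,
\[
 \frac{\prod_j|h(\rho e^{it},\xi_j)|}{h(\rho)^d}\le(1-\eta)^d .
\]

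\textbf{Pole factors.} For $J_-$, $|1+\rho e^{it}|$ is bounded below on $\mathcal R_\tau$. Indeed, near $t=\pi$ we have $|1+\rho e^{it}|\gtrsim\tau+|t-\pi|\ge\eta_0\tau$, so in all cases $|1+\rho e^{it}|^{-1}\lesssim\tau^{-1}$. For $J_+$, near $t=0$ we have $|1-\rho e^{it}|\gtrsim\max(\tau,|t|)\ge\eta_0\tau$, so $|1-\rho e^{it}|^{-1}\lesssim\tau^{-1}$ as well. Both pole factors are therefore at most $C\tau^{-1}\le CAd$ on $\mathcal R_\tau$. Since the arc has length at most $2\pi$,
\[
 |T_\pm(\xi)|\le C_A\,d\,(1-\eta)^d .
\]

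\textbf{Conclusion of \eqref{eq:growing-remote-exponential}.} By \eqref{eq:growing-base-lower}, $|J_+(0;E_0)|\ge cd^{-1/2}$ once $d\ge d_0$ from \cref{lem:growing-central}. Dividing gives $C_Ad^{3/2}(1-\eta)^d$, which is \eqref{eq:growing-remote-exponential} with $\eta_A=\eta$ and $d_r(A)=d_0$. The exponent $d^{3/2}$ leaves room: the crude pole bound $\tau^{-1}\lesssim Ad$ and the factor $d^{1/2}$ from the central lower bound are the only polynomial losses.

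\textbf{Conclusion of \eqref{eq:growing-remote-budget}.} Since
\[
 \sup_{d}\, d^{K+3/2}(1-\eta_A)^d<\infty,
\]
we have $C_Ad^{3/2}(1-\eta_A)^d\le C_{A,K}d^{-K}$. Multiplying back by $|J_+(0;E_0)|$ yields \eqref{eq:growing-remote-budget}, where $d_{r,K}$ can be taken to be $d_r$ or larger.

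\textbf{Main obstacle.} The analytic content lies entirely in \cref{prop:uniform-remote-gap}, which is already available. The one step needing care here is consistency of constants. The remote gap must be invoked with exactly $C_0=\eta_0$, so that $\mathcal R_\tau$ is precisely the complement of the windows $E_0$ and $E_\pi=\pi+E_0$ used in \cref{lem:growing-central}. In addition, $\alpha_r$ must be taken large enough that the central lemma's hypotheses and $\tau\le\tau_0$ hold simultaneously.
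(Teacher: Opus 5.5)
Your proposal is correct and follows the paper's proof step by step. You apply the remote gap with $C_0=\eta_0$, bound the product by $(1-\eta)^d$ uniformly in $\xi$, bound the pole factor by $O(\tau^{-1})=O(Ad)$, and divide by the central lower bound $cd^{-1/2}$. The only cosmetic difference is that the paper bounds both poles globally via $|1\mp\rho e^{it}|\ge 1-\rho\ge\tau/2$, so your local analysis of the poles near $t=0$ and $t=\pi$ is unnecessary.
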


\begin{proof}
Take $C_0:=\eta_0$, with $\eta_0$ fixed in
\cref{lem:growing-phase-decay}; thus $\Delta(t)<C_0\tau$ is exactly the
union of the two central windows used in \cref{lem:growing-central}.
Choose $\alpha_r(A)$ and $d_r(A)$ large enough that
$\tau\le\tau_0$ throughout the stated range and that
\eqref{eq:growing-base-lower} from \cref{lem:growing-central} holds whenever
$d\ge d_r(A)$ and $n/d\ge\alpha_r(A)$.  Denote the gap supplied by
\cref{prop:uniform-remote-gap} by $\eta_A>0$.  On the complement,
\eqref{eq:uniform-remote-gap} gives, for every coordinate,
\[
 \frac{|h(e^{-\tau+it},\xi_j)|}{h(e^{-\tau})}\le1-\eta_A.
\]
Therefore
\begin{equation}
 \frac{\prod_{j=1}^d|h(e^{-\tau+it},\xi_j)|}{h(e^{-\tau})^d}
 \le(1-\eta_A)^d.
 \label{eq:product-remote-gap}
\end{equation}
For either pole,
\[
 |1\mp e^{-\tau+it}|^{-1}
 \le(1-e^{-\tau})^{-1}\le2\tau^{-1}
\]
when $\tau\le1$.  Since the total angular length is at most $2\pi$,
\begin{equation}
 |T_\pm(\xi)|\le2\tau^{-1}(1-\eta_A)^d.
 \label{eq:remote-integral-pre}
\end{equation}
In the range $n/d=\alpha\le Ad$, \eqref{eq:growing-tau} implies
$\tau^{-1}\le4Ad$.  Hence
\[
 |T_\pm(\xi)|\le C_A d(1-\eta_A)^d.
\]
On the other hand, \eqref{eq:growing-base-lower} gives
$|J_+(0;E_0)|\ge c d^{-1/2}$.  Dividing and summing the four terms gives
\eqref{eq:growing-remote-exponential}.  Finally, for every fixed $K$,
exponential decay dominates $d^{-K-3/2}$; after increasing the fixed lower
dimension threshold this yields \eqref{eq:growing-remote-budget}.
\end{proof}

\subsection{Growing residual and maximal estimate}

\begin{theorem}[growing two-saddle residual]
\label{thm:growing-residual}
For every fixed $A>0$ there exist $\alpha_1(A)\ge1$ and $C_A<\infty$ such
that, for all $d\ge1$ and
\[
 \alpha_1(A)d\le n\le Ad^2,
\]
one has
\begin{equation}
 \sup_{\xi\in\mathbb T^d}|r_{d,n}(\xi)|\le C_A d^{-1}.
 \label{eq:growing-residual}
\end{equation}
\end{theorem}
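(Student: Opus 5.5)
The proof follows the template of \cref{prop:small-residual} and \cref{prop:critical-residual}, now using the growing-range inputs. Take $\alpha_1(A)$ to be at least the maximum of the thresholds in \cref{lem:growing-central} and \cref{lem:growing-remote}. Take $d_0$ at least the corresponding dimension thresholds, with \cref{lem:growing-remote} applied at $K=1$. For $d<d_0$ the range $n\le Ad^2$ contains finitely many pairs $(d,n)$. There $|r_{d,n}|\le 1+|a_{d,n}|\le2$, since $|\Gamma_{d,\rho}|\le1$ and $|a_{d,n}|\le1$, so these pairs are absorbed into $C_A$. Fix therefore $d\ge d_0$ and $\alpha_1d\le n\le Ad^2$.

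\textbf{Contour decomposition.} Split $[-\pi,\pi]$ into $E_0=\{|t|\le\eta_0\tau\}$, $E_\pi=\pi+E_0$ and the remote set $\mathcal R_\tau$. By \eqref{eq:negative-arc-transport}, the $E_\pi$ contribution to $J_+(\xi;[-\pi,\pi])$ is $(-1)^nJ_-(\xi-\boldsymbol\omega;E_0)$. With $D=J_+(0;[-\pi,\pi])$ this gives the exact identity
\[
 m_{d,n}(\xi)=c_+F_+(\xi)+c_-F_-(\xi-\boldsymbol\omega)+\frac{T_+(\xi)}{D}.
\]
Here $F_\pm(\xi)=J_\pm(\xi;E_0)/J_\pm(0;E_0)$, $c_+=J_+(0;E_0)/D$ and $c_-=(-1)^nJ_-(0;E_0)/D$.

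\textbf{Normalizing the denominator.} At $\xi=0$ the negative arc equals $(-1)^nJ_-(\boldsymbol\omega;E_0)=(-1)^nJ_-(0;E_0)F_-(\boldsymbol\omega)$. By \eqref{eq:growing-central-minus} and \eqref{eq:growing-cross-saddle}, $|F_-(\boldsymbol\omega)|\le Cd^{-1}+e^{-cd/\tau}$. The factor $e^{-cd/\tau}$ is at most $e^{-cd}$ because $\tau\le1$. By \eqref{eq:growing-base-plus-explicit}--\eqref{eq:growing-base-minus-explicit}, $|J_-(0;E_0)|\lesssim\tau d^{-1/2}\lesssim|J_+(0;E_0)|$, so the negative arc is $O(d^{-1})$ relative to $J_+(0;E_0)$. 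The remote parts are $O(d^{-1})$ relative to $J_+(0;E_0)$ by \eqref{eq:growing-remote-budget}. Hence $D=J_+(0;E_0)\{1+O(d^{-1})\}$, which gives $c_+=1+O(d^{-1})$ and $|c_-|\lesssim\tau\le1$. The same lemma also gives $|T_+(\xi)/D|\le Cd^{-1}$ uniformly in $\xi$.

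\textbf{Identifying $c_-$ with $a_{d,n}$.} Evaluate the identity at $\xi=\boldsymbol\omega$, using $F_-(0)=1$. Since $|c_+F_+(\boldsymbol\omega)|\le 2(Cd^{-1}+e^{-cd})$ by \eqref{eq:growing-central-plus} and \eqref{eq:growing-cross-saddle}, we get $|a_{d,n}-c_-|\le Cd^{-1}$.

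\textbf{Conclusion.} Subtract $\mathsf G_{d,n}=\Gamma_{d,\rho}+a_{d,n}\Gamma_{d,\rho}(\cdot-\boldsymbol\omega)$ and use the triangle inequality:
\[
 |r_{d,n}(\xi)|\le|c_+||F_+-\Gamma_{d,\rho}|+|c_+-1|+|c_-||F_--\Gamma_{d,\rho}|+|c_--a_{d,n}|+|T_+/D|.
\]
Each term is $O_A(d^{-1})$ by the bounds above, the central estimates \eqref{eq:growing-central-plus}--\eqref{eq:growing-central-minus}, and $|\Gamma_{d,\rho}|\le1$.

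The main obstacle is already handled earlier: it is the uniform remote gap \cref{prop:uniform-remote-gap}. That gap overcomes the $\tau^{-1}\lesssim Ad$ pole loss and the $d^{1/2}$ normalization. The remaining delicate point is that $J_-(0;E_0)$ is smaller than $J_+(0;E_0)$ by a factor $\tau$. This only helps, since $|c_-|\lesssim\tau$, but the coefficient $c_-$ must still be matched with $a_{d,n}$ through the evaluation at $\boldsymbol\omega$ rather than estimated directly.
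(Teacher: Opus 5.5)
Your proof is correct and follows the paper's argument essentially step for step. It uses the same split into $E_0$, $E_\pi$ and $\mathcal R_\tau$, the normalization $D=J_+(0;E_0)\{1+O(d^{-1})\}$, the recovery of $c_-$ by evaluating the exact identity at $\boldsymbol\omega$, and the final triangle inequality. The only differences are cosmetic: you invoke \cref{lem:growing-remote} with $K=1$ where the paper quotes an $O(d^{-2})$ remote bound, and your trivial bound for small $d$ should read $|r_{d,n}|\le|m_{d,n}|+|\mathsf G_{d,n}|\le 3$ rather than $\le 2$, which is harmless.
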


\begin{proof}
Choose $\alpha_c(A)$ and $d_c(A)$ so that the conclusions of
\cref{lem:growing-central} hold whenever $n/d\ge\alpha_c(A)$ and
$d\ge d_c(A)$.  Let $\alpha_r(A)$ and $d_r(A)$ be supplied by
\cref{lem:growing-remote}, and set
\[
 \alpha_1(A)=\max\{\alpha_c(A),\alpha_r(A)\},\qquad
 d_0(A)=\max\{d_c(A),d_r(A)\}.
\]
For $d\ge d_0(A)$, set $D=J_+(0;[-\pi,\pi])$ and combine the two central
arcs from \cref{lem:growing-central} with the remote remainder from
\cref{lem:growing-remote}.  At frequency zero, the negative central arc is
a cross-saddle term: after \eqref{eq:negative-arc-transport} it contains
$J_-(\boldsymbol\omega;E_0)$.  By
\eqref{eq:growing-central-minus} and \eqref{eq:growing-cross-saddle},
\[
 \frac{|J_-(\boldsymbol\omega;E_0)|}{|J_-(0;E_0)|}
 \le C_A d^{-1}+e^{-cd/\tau}.
\]
Since $|J_-(0;E_0)|/|J_+(0;E_0)|\lesssim\tau\le1$, and the remote part is
$O_A(d^{-2})$ by \cref{lem:growing-remote}, the full denominator differs
from $J_+(0;E_0)$ by $O_A(d^{-1})$ relative to $J_+(0;E_0)$ in modulus.  Thus
$c_+:=J_+(0;E_0)/D=1+O_A(d^{-1})$ and $|c_+|\le2$.  Moreover
\eqref{eq:growing-base-plus-explicit}--
\eqref{eq:growing-base-minus-explicit} give
$c_-:=(-1)^nJ_-(0;E_0)/D$ and $|c_-|\le2$ after enlarging $d_0$.

Set
\[
 F_+(\xi)=\frac{J_+(\xi;E_0)}{J_+(0;E_0)},
 \qquad
 F_-(\xi)=\frac{J_-(\xi;E_0)}{J_-(0;E_0)}.
\]
The exact contour decomposition gives
\[
 m_{d,n}(\xi)=c_+F_+(\xi)+c_-F_-(\xi-\boldsymbol\omega)+E_{\rm rem}(\xi),
 \qquad \|E_{\rm rem}\|_{L^\infty}\le C_A d^{-2}.
\]
For the coefficient of the negative branch, evaluate this identity at
$\xi=\boldsymbol\omega$.  By
\eqref{eq:growing-central-plus} and \eqref{eq:growing-cross-saddle},
\[
 \frac{J_+(\boldsymbol\omega;E_0)}{J_+(0;E_0)}
 =O_A(d^{-1}+e^{-cd/\tau}).
\]
Since $|c_+|\le2$, $m_{d,n}(\boldsymbol\omega)=a_{d,n}$, and the remote
contribution is $O_A(d^{-2})$ by \eqref{eq:growing-remote-budget}, it
follows that $|c_--a_{d,n}|\le C_A d^{-1}$.

The central estimates \eqref{eq:growing-central-plus}--
\eqref{eq:growing-central-minus} and the coefficient bounds above imply
\eqref{eq:growing-residual} directly.  For $d<d_0(A)$, the trivial bound
$|m_{d,n}|+|\mathsf G_{d,n}|\le3$ is absorbed by enlarging $C_A$.
\end{proof}

\begin{corollary}[growing maximal range]
\label{thm:growing}
For every $A>0$ there is $C_A'<\infty$ such that, with
$\alpha_1=\alpha_1(A)$ from \cref{thm:growing-residual},
\[
 \left\|\sup_{\alpha_1d\le n\le Ad^2}|M_{d,n}f|\right\|_2
 \le C_A'\|f\|_2.
\]
\end{corollary}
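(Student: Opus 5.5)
We argue exactly as in \cref{thm:critical}, now using \cref{thm:growing-residual} for the residual. For $\alpha_1d\le n\le Ad^2$ we split, pointwise in $x$,
\[
 M_{d,n}f=G_{d,\rho(n/d)}f+a_{d,n}\,U_{\boldsymbol\omega}G_{d,\rho(n/d)}U_{\boldsymbol\omega}f+R_{d,n}f .
\]
This identity holds because the multiplier of $M_{d,n}$ is $\mathsf G_{d,n}+r_{d,n}$. The first summand in $\mathsf G_{d,n}$ is the multiplier of $G_{d,\rho}$, and the second, by the discussion preceding \eqref{eq:translated-gaussian-maximal}, is $a_{d,n}$ times the multiplier of $U_{\boldsymbol\omega}G_{d,\rho}U_{\boldsymbol\omega}$. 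Taking the supremum over $n$ gives
\[
 \sup_n|M_{d,n}f|\le\sup_{0<\rho<1}|G_{d,\rho}f|+\sup_{0<\rho<1}|U_{\boldsymbol\omega}G_{d,\rho}U_{\boldsymbol\omega}f|+\sup_n|R_{d,n}f|,
\]
where we used $|a_{d,n}|\le1$ from \cref{prop:coefficient}. The $\ell^2$ norms of the first two terms are bounded by $C_2\|f\|_2$, uniformly in $d$, by \eqref{eq:known-gaussian} and \eqref{eq:translated-gaussian-maximal}.

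For the residual we bound the supremum by the square function and apply Plancherel:
\[
 \Bigl\|\sup_{\alpha_1d\le n\le Ad^2}|R_{d,n}f|\Bigr\|_2^2\le\sum_{\alpha_1d\le n\le Ad^2}\|r_{d,n}\widehat f\|_{L^2(\T^d)}^2\le\#\{n\}\cdot C_A^2d^{-2}\|f\|_2^2 .
\]
The number of levels is at most $Ad^2+1$, so the right-hand side is at most $C_A^2(A+1)\|f\|_2^2$, which is bounded independently of $d$. Note that \cref{thm:growing-residual} already covers all $d\ge1$, since small dimensions were absorbed into $C_A$ there. Combining the three bounds gives the corollary with $C_A'=2C_2+C_A\sqrt{A+1}$.

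There is no real obstacle at this stage. The only point to check is the bookkeeping: the error $d^{-1}$ is exactly square-summable over $O(d^2)$ levels. This is why the first-order model already suffices in the growing range, and no higher-order expansion is needed for $\ell^2$.
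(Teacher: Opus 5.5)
Your proposal is correct and follows essentially the same route as the paper. You split $M_{d,n}$ pointwise into the centered Gaussian, $a_{d,n}$ times the parity-conjugated Gaussian (controlled via $|a_{d,n}|\le1$ and the two Gaussian maximal bounds), and the residual, which you handle by the square-function/Plancherel estimate $\sum_n\|R_{d,n}f\|_2^2\le (Ad^2+1)C_A^2d^{-2}\|f\|_2^2$; the paper writes the resulting constant as $C_A\sqrt{A+d^{-2}}$, matching your $C_A\sqrt{A+1}$.
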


\begin{proof}
There are at most $Ad^2+1$ squared-radius levels.  By Plancherel,
\[
 \left\|\sup_{\alpha_1d\le n\le Ad^2}|R_{d,n}f|\right\|_2
 \le\left(\sum_n\|R_{d,n}f\|_2^2\right)^{1/2}
 \le C_A\sqrt{A+d^{-2}}\,\|f\|_2.
\]
Adding the centered and translated Gaussian branches, using
\eqref{eq:known-gaussian} and \eqref{eq:translated-gaussian-maximal}, finishes the proof.
\end{proof}

\subsection{Choice of constants}

Fix $\alpha_0$ as in the small range.  Let $C_L$ be the radius threshold in
\eqref{eq:known-large-radius} and put
\[
 A=(C_L+1)^2.
\]
Choose $\alpha_1=\alpha_1(A)$ from \cref{thm:growing-residual}, and apply
\cref{prop:critical-residual} with
\[
 a=\frac{\alpha_0}{2},\qquad b=2\alpha_1.
\]
These choices are independent of $d$, $n$, and $f$.

Consider the three intervals
\[
 [0,\alpha_0d],\qquad
 [\tfrac12\alpha_0d,2\alpha_1d],\qquad
 [\alpha_1d,Ad^2].
\]
Their union contains every squared-radius level $0\le n\le Ad^2$.  If the
third interval is empty, the second already extends beyond $Ad^2$; otherwise
the adjacent intervals overlap.  The corresponding maximal estimates are
\cref{thm:small,thm:critical,thm:growing}.
If $n>Ad^2$, then
\[
 \sqrt n>\sqrt A\,d=(C_L+1)d>C_Ld,
\]
so $M_{d,n}=\Avg_{\sqrt n}$ belongs to the large-radius family in
\eqref{eq:known-large-radius}.  Therefore
\begin{equation}
 \left\|\sup_{n\ge0}|M_{d,n}f|\right\|_2\le C_2\|f\|_2,
 \qquad d\ge1,
 \label{eq:first-order-full-l2}
\end{equation}
with $C_2$ independent of $d$.  By \cref{lem:integer-radius-reduction},
this is also the full real-radius $\ell^2$ estimate.
\section{Higher-order two-saddle approximation}
\label{sec:higher-order}

For $1<p<2$ we need faster residual decay.  We therefore retain an arbitrary
finite number of terms in the two saddle expansions.  The correction terms are
radial derivatives of the normalized Gaussian multiplier; finite differences
replace them by bounded linear combinations of normalized Gaussians at nearby
real scales.

Fix an integer $K\ge1$.  Constants may depend on $K$ in the small range,
on $K,a,b$ in the fixed critical window $ad\le n\le bd$, and on $K,A$ in
the growing range $n\le Ad^2$.  In arguments written once for all three
regimes we suppress $a,b,A$ from the notation; the range dependence is
restored explicitly in the regime-specific conclusions.

\subsection{Uniform derivatives to arbitrary fixed order}

Write $s=\log\rho$ and
\[
 \Gamma_s(\xi)=\Gamma_{d,e^s}(\xi)
 =\prod_{j=1}^d\frac{h(e^s,\xi_j)}{h(e^s,0)}.
\]
On any zero-free strip used below we use the same notation for the analytic
continuation
\[
 \Gamma_z(\xi)=\prod_{j=1}^d
 \frac{h(e^z,\xi_j)}{h(e^z,0)}.
\]
For the three finite regimes we use the following notation:
\[
\begin{array}{c|c|c|c}
\text{range}&N&\sigma&\mathcal E(\xi)\\ \hline
n\le\alpha_0d&n&1&\rho\sum_j\delta(\xi_j)\\
 ad\le n\le bd&d&1&\sum_j\delta(\xi_j)\\
 \alpha_1d\le n\le Ad^2&d&\tau&
 \tau^{-1}\sum_j\operatorname{dist}(\xi_j,\mathbb Z)^2.
\end{array}
\]
Thus the local variable is always
\begin{equation}
 t=\frac{\sigma u}{\sqrt N}.
 \label{eq:unified-local-variable}
\end{equation}
In the three ranges the Gaussian curvature parameter of
\cref{lem:symmetric-saddle} is $q=\lambda_2(\alpha)$, $q=v(\rho)$, and
$q=q_g(\tau)$, respectively, by \eqref{eq:small-normalized-cumulants},
\eqref{eq:critical-variance}, and \eqref{eq:growing-exact-quadratic}, and
the positive central arc is $E_0=\{|t|\le\eta\sigma\}$ with
$\eta=\pi/6$, $\eta=\eta_{a,b}$, and $\eta=\eta_0$, respectively
(\cref{lem:small-central}, the width fixed after \cref{lem:critical-strip},
and \cref{lem:growing-phase-decay}).
We normalize the two Cauchy kernels at the saddle by
\begin{equation}
 \mathfrak a_\pm(u)=\frac{1\mp\rho}{1\mp\rho e^{i\sigma u/\sqrt N}},
 \qquad \mathfrak a_\pm(0)=1.
 \label{eq:unified-pole-factor}
\end{equation}
The upper sign corresponds to $J_+$ and the lower sign to $J_-$.  The
constants $a,b$ are fixed in the critical range, and $\rho=e^{-\tau}$ in
the growing range.

\begin{lemma}[differentiation formulas]
\label{lem:finite-differentiation-algebra}
Fix an integer $m\ge1$.  The coefficients in the following differentiation
identities depend only on $m$.

If $G$ is $C^m$, then
\begin{equation}
 \partial^m e^G
 =e^G
 \sum_{\substack{\nu_1,\ldots,\nu_m\ge0\\
       \sum_{j=1}^m j\nu_j=m}}
 c_{m,\nu}\prod_{j=1}^m(\partial^jG)^{\nu_j}.
 \label{eq:finite-exp-differentiation}
\end{equation}
If $F$ is $C^m$ and nonvanishing, then
\begin{equation}
 \partial^m\log F
 =\sum_{\substack{\nu_1,\ldots,\nu_m\ge0\\
       \sum_{j=1}^m j\nu_j=m}}
 d_{m,\nu}\,
 F^{-(\nu_1+\cdots+\nu_m)}
 \prod_{j=1}^m(\partial^jF)^{\nu_j}.
 \label{eq:finite-log-differentiation}
\end{equation}
Consequently, if, for $i=0,1$,
\[
 |F_i|\ge\kappa>0,
 \qquad
 \max_{0\le j\le m}|\partial^jF_i|\le M,
\]
and
\[
 \max_{0\le j\le m}|\partial^j(F_1-F_0)|\le\varepsilon,
\]
then
\begin{equation}
 |\partial^m\log F_1-\partial^m\log F_0|
 \le C_{m,\kappa,M}\varepsilon.
 \label{eq:finite-log-stability}
\end{equation}
Also, if
\[
 |\partial^jG|\le A_j\lambda^jE,
 \qquad 1\le j\le m,
 \qquad
 |e^G|\le e^{-cE},
\]
with $E\ge0$, then
\begin{equation}
 |\partial^m e^G|
 \le C_{m,c,A_1,\ldots,A_m}\lambda^m(1+E)^m e^{-cE}.
 \label{eq:finite-exp-bound}
\end{equation}
\end{lemma}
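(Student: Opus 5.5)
The formulas \eqref{eq:finite-exp-differentiation} and \eqref{eq:finite-log-differentiation} follow by induction on $m$ (Fa\`a di Bruno). For \eqref{eq:finite-exp-differentiation} the case $m=1$ is $\partial e^G=e^G\partial G$. Assuming the formula at order $m$, differentiate once more. Differentiating the prefactor $e^G$ multiplies a monomial by $\partial G$, which raises $\nu_1$ by one and preserves the weight. Differentiating a factor $(\partial^jG)^{\nu_j}$ replaces one copy of $\partial^jG$ by $\partial^{j+1}G$, which moves one unit from $\nu_j$ to $\nu_{j+1}$; the weight $\sum j\nu_j$ increases by exactly one. The new coefficients are integer combinations of the old ones, so they depend only on $m$.

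The same induction gives \eqref{eq:finite-log-differentiation}. Start from $\partial\log F=F^{-1}\partial F$. Differentiating $F^{-k}$ produces $-kF^{-k-1}\partial F$, which raises both $\nu_1$ and the total count $\sum\nu_j$ by one. Differentiating a factor $\partial^jF$ shifts weight from $j$ to $j+1$ without changing $\sum\nu_j$. Hence the exponent of $F^{-1}$ remains equal to $\sum_j\nu_j$.

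For the stability bound \eqref{eq:finite-log-stability}, write each term of \eqref{eq:finite-log-differentiation} as a product $\prod_{\ell}g_\ell$ with a bounded number of factors. Each factor is either $F^{-1}$ or some $\partial^jF$. For the same monomial evaluated at $F_1$ and at $F_0$, telescope:
\[
\prod_\ell g_\ell^{(1)}-\prod_\ell g_\ell^{(0)}=\sum_\ell\Bigl(\prod_{\ell'<\ell}g^{(1)}_{\ell'}\Bigr)\bigl(g^{(1)}_\ell-g^{(0)}_\ell\bigr)\Bigl(\prod_{\ell'>\ell}g^{(0)}_{\ell'}\Bigr).
\]
Every unchanged factor is bounded by $\max(M,\kappa^{-1})$. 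The differences satisfy $|\partial^jF_1-\partial^jF_0|\le\varepsilon$ and
\[
|F_1^{-1}-F_0^{-1}|=\frac{|F_0-F_1|}{|F_1F_0|}\le\kappa^{-2}\varepsilon .
\]
The number of monomials and factors depends only on $m$, so summing gives $C_{m,\kappa,M}\varepsilon$.

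For \eqref{eq:finite-exp-bound}, insert the hypotheses into \eqref{eq:finite-exp-differentiation}. A monomial with multi-index $\nu$ is bounded by
\[
|e^G|\prod_j\bigl(A_j\lambda^jE\bigr)^{\nu_j}\le e^{-cE}\Bigl(\prod_jA_j^{\nu_j}\Bigr)\lambda^{\sum j\nu_j}E^{\sum\nu_j}.
\]
Here $\sum_jj\nu_j=m$, and $1\le\sum_j\nu_j\le m$. Hence $E^{\sum\nu_j}\le(1+E)^m$, and summing over the finitely many $\nu$ yields $C\lambda^m(1+E)^me^{-cE}$. There is no real obstacle in this lemma. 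The only point needing care is to verify in the induction that the weight $\sum j\nu_j$ goes up by exactly one at each step, since that fixes the power $\lambda^m$.
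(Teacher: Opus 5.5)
Your proposal is correct and follows the paper's proof essentially step for step. You use induction on $m$ for both differentiation formulas, telescope the difference of the finite sums with $\kappa$ and $M$ controlling the factors, and insert the hypotheses to get $\lambda^{\sum j\nu_j}=\lambda^m$ and $E^{\sum\nu_j}\le(1+E)^m$. One wording slip: multiplying a monomial by $\partial G$ raises $\nu_1$, and therefore the weight $\sum j\nu_j$, by one rather than preserving the weight. That increase by one is what the induction needs, and you state it correctly at the end.
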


\begin{proof}
Both formulas follow by induction on $m$.  Differentiating a monomial in
\eqref{eq:finite-exp-differentiation} either differentiates one factor or
uses $(e^G)'=e^GG'$, and hence preserves the constraint
$\sum j\nu_j=m+1$.  The logarithmic formula is obtained in the same way from
$(\log F)'=F'/F$.  The stability estimate follows by subtracting the two
finite sums in \eqref{eq:finite-log-differentiation}; every difference of
products is telescoped; the bounds $\kappa$ and $M$ control the denominators
and the remaining factors.  Finally,
\eqref{eq:finite-exp-differentiation} gives a factor
$\lambda^{\sum j\nu_j}=\lambda^m$, while
$E^{\sum\nu_j}\le(1+E)^m$, which proves
\eqref{eq:finite-exp-bound}.
\end{proof}

\begin{lemma}[all-order central bounds]
\label{lem:all-order-central-bounds}
Fix an integer $M\ge1$.  There is a level $n_M$ for the small range; for
every fixed $0<a\le b<\infty$ there is a dimension $d_{M,a,b}$ for the
critical range; and for every fixed $A>0$ there are
$\alpha_{M,A}$ and $d_{M,A}$ for the growing range.  Subject to these
lower thresholds, the following estimates hold uniformly in the
corresponding range.

Constants may depend on $M$ and on the fixed regime parameters ($a,b$ or
$A$); the nearby-scale estimates may also depend on $B$.  They are independent
of $d$, $n$, and $\xi$.

Let
\[
 \Psi(u)=d\Phi_\rho(\sigma u/\sqrt N),
 \qquad
 P(u,\xi)=P_\rho(\sigma u/\sqrt N,\xi).
\]
There are $q_0,q_1,c>0$ such that
\begin{equation}
 q_0\le q:=\frac{d\sigma^2}{N}L''(s)\le q_1,
 \qquad
 \Re\Psi(u)\le-cu^2
 \label{eq:all-order-phase-decay}
\end{equation}
throughout the central interval $|u|\le\eta\sqrt N$, with the central
width $\eta$ fixed in the corresponding range.  Moreover, for
$2\le r\le M$,
\begin{equation}
 \left|\frac{d\sigma^r}{N}L^{(r)}(s+it)\right|\le C_M,
 \qquad |t|\le\eta\sigma,
 \label{eq:all-order-phase-derivatives}
\end{equation}
and for $0\le r\le M$,
\begin{align}
 |\partial_u^r \mathfrak a_\pm(u)|&\le C_MN^{-r/2},
 \label{eq:all-order-pole-derivatives}\\
 |\partial_u^rP(u,\xi)|&\le
 C_MN^{-r/2}(1+\mathcal E(\xi))^r e^{-c\mathcal E(\xi)}.
 \label{eq:all-order-product-derivatives}
\end{align}
Here $\mathfrak a_\pm$ is the normalized pole factor in
\eqref{eq:unified-pole-factor}.

Finally, for each fixed $B>0$, after increasing the lower threshold once
more, if $|s'-s|\le B\sigma/\sqrt N$, then $e^{s'}\in(0,1)$ and
\begin{equation}
 \left|\sigma^r\partial_s^r\Gamma_{s'}(\xi)\right|\le C_{M,B},
 \qquad 0\le r\le M.
 \label{eq:all-order-real-scale-derivatives}
\end{equation}
\end{lemma}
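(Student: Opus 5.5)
The plan is to prove \cref{lem:all-order-central-bounds} regime by regime, upgrading the first-order inputs (orders $\le5$ for the phase, $\le2$ for the amplitudes) from \cref{lem:small-scale,lem:small-central,lem:critical-strip,lem:critical-central,lem:growing-scale,lem:two-image,lem:growing-product-derivatives} to order $M$. The mechanism is the same throughout: the Cauchy-type bounds used there only need absolute convergence of differentiated theta or Poisson series on a zero-free sector, and these hold for every fixed number of derivatives. Once that is done, \cref{lem:finite-differentiation-algebra} converts per-coordinate logarithmic bounds into product bounds.

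\emph{Phase and pole factors.} The bounds \eqref{eq:all-order-phase-decay} are already proved: they are \eqref{eq:small-normalized-cumulants}, \eqref{eq:small-scaled-phase-decay}, \eqref{eq:critical-variance}, \eqref{eq:critical-scaled-phase-decay}, \eqref{eq:growing-exact-quadratic} and \eqref{eq:growing-scaled-phase-decay}. Note that $q=d\sigma^2L''(s)/N$ equals $\lambda_2$, $v(\rho)$ and $q_g(\tau)$ respectively.

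For \eqref{eq:all-order-phase-derivatives} I will argue separately in each range.
\begin{itemize}
\item Small range: $\partial_s^jh(e^{s+it},0)=O_j(\rho)$ with $h$ near $1$. Formula \eqref{eq:finite-log-differentiation} then gives $L^{(r)}=O_r(\rho)$, and $d\rho\simeq n$.
\item Critical range: compactness of $K_{a,b}$ times the zero-free strip gives bounded derivatives to every order.
\item Growing range: write $L(-w)=\tfrac12\log\pi-\tfrac12\log w+\log(1+S(w))$ as in \cref{lem:growing-scale}. The bound \eqref{eq:growing-S-complex-derivatives} extends to all $j\le M$. This gives $|L^{(r)}|\le C\tau^{-r}$, and the prefactor $d\tau^r/d$ cancels it.
\end{itemize}

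For the pole factors \eqref{eq:all-order-pole-derivatives}, each $u$-derivative brings down $i\sigma/\sqrt N$ times a factor $\rho e^{it}/(1\mp\rho e^{it})$. This factor is bounded in the small and critical ranges. In the growing range it is $O(\tau^{-1})$ for the $+$ sign, which is compensated by $\sigma=\tau$, since $|1-\rho e^{it}|\gtrsim\tau$ for $|t|\le\eta_0\tau$.

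\emph{Product amplitudes.} Set $G=\log P=\sum_j\log R_j$. I need the per-coordinate bounds
\[
|\partial_t^m\log R(\cdot,x)|\le C_m\,\sigma^{-m}\,e(x),
\]
where $e(x)$ is $\rho\delta(x)$, $\delta(x)$, or $r(x)^2/\tau$ in the three ranges. In the small and critical ranges this follows from the identity $\partial^m\{h(\cdot,x)-h(\cdot,0)\}=O(\rho\delta)$ or $O(\delta)$ (obtained by pairing $k$ with $-k$), combined with the stability estimate \eqref{eq:finite-log-stability} applied with $F_0=h(\cdot,0)$ and $F_1=h(\cdot,x)$. In the growing range I will repeat the argument of \cref{lem:two-image} with $j\le M$ $w$-derivatives.
\begin{itemize}
\item For $r\le1/4$: the exact term $-\pi^2r^2/w$ has $m$th derivative $O(r^2\tau^{-m-1})$, and \eqref{eq:growing-nearest-image-error} extends to all $j$.
\item For $r\ge1/4$: use the nondegeneracy \eqref{eq:growing-two-image-noncancel} together with the case split $b\ge\tau$ versus $b<\tau$. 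Derivatives of $\log(1+z)$ are $O(\tau^{-m-1})$, and $r^2\gtrsim1$ there.
\end{itemize}
Summing over coordinates and applying the chain rule with $t=\sigma u/\sqrt N$ gives
\[
|\partial_u^mG|\le C_mN^{-m/2}\mathcal E(\xi).
\]
Combined with the contraction $|P|\le e^{-c\mathcal E}$, the bound \eqref{eq:finite-exp-bound} with $\lambda=N^{-1/2}$ yields \eqref{eq:all-order-product-derivatives}.

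\emph{Nearby real scales.} For \eqref{eq:all-order-real-scale-derivatives}, I first note that $|s'-s|\le B\sigma/\sqrt N$ keeps $e^{s'}$ in the same regime once the threshold is large. In the small range the relative change of $\rho$ is $O(n^{-1/2})$; in the critical range $\rho$ stays in a slightly enlarged compact set; in the growing range the relative change of $\tau$ is $O(d^{-1/2})$. So all the one-coordinate bounds above hold at $s'$ on the real axis with comparable parameters, and $\mathcal E$ at $s'$ is comparable to $\mathcal E$ at $s$. Then $\log\Gamma_{s'}=\sum_j\log R_j(s')$ has $\sigma^r\partial_s^r$-derivatives of size $O(\mathcal E')$ and satisfies $\Gamma_{s'}\le e^{-c\mathcal E'}$. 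Applying \eqref{eq:finite-exp-bound} with $\lambda=1$ gives $C(1+\mathcal E')^re^{-c\mathcal E'}\le C_{M,B}$.

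\emph{Main obstacle.} I expect the hardest step to be the growing-range two-image region $1/4\le r\le1/2$ at higher order. There the derivatives of $\log(1+z)$ involve powers $(b/w^2)^k z/(1+z)^k$, and each term must be shown to be $O(\tau^{-m-1})$ uniformly as $b\to0$. The plan is to bound $|b/\tau|^kz$ by $C_k$ using $|z|\le e^{-cb/\tau}$ when $b\ge\tau$, to use $b<\tau$ directly otherwise, and to rely on $|1+z|\ge1/2$ for the denominators.
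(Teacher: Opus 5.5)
Your proposal is correct and follows the paper's proof essentially step by step. The shared steps are: the pairing identity combined with \eqref{eq:finite-log-stability} in the small and critical ranges; the representation $L(-w)=\frac12\log\pi-\frac12\log w+\log(1+S(w))$ for the growing phase; the one-image/two-image split for the growing quotient; \eqref{eq:finite-exp-bound} for the product amplitude; and comparability of the shifted real parameters for the nearby-scale bound. The only cosmetic difference is in the two-image region: you bound the Fa\`a di Bruno terms directly with the case split $b\ge\tau$ versus $b<\tau$, while the paper packages the same estimate into the recursion $\partial_w^jF_0(b/w)=w^{-j}F_j(b/w)$, with $F_j$ bounded on the sector $\Re y\ge0$, $|\Im y|\le\frac12\Re y$.
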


\begin{proof}
In the small range, for every fixed $m\ge0$ and $|t|\le\pi/6$,
\begin{align*}
 &\partial_s^m\{h(e^{s+it},x)-h(e^{s+it},0)\}\\
 &\qquad=2\sum_{k\ge1}k^{2m}\rho^{k^2}e^{ik^2t}
       \{\cos(2\pi kx)-1\},
\end{align*}
so that
\begin{equation}
 \left|\partial_s^m\{h(e^{s+it},x)-h(e^{s+it},0)\}\right|
 \le C_m\rho\delta(x).
 \label{eq:small-all-order-difference}
\end{equation}
The theta factors are bounded away from zero on this strip, and their
positive-order radial derivatives are $O_m(\rho)$.  Applying
\eqref{eq:finite-log-stability} to
$F_1=h(e^{s+it},x)$ and $F_0=h(e^{s+it},0)$ gives
\begin{equation}
 \left|\partial_s^m\log
 \frac{h(e^{s+it},x)}{h(e^{s+it},0)}\right|
 \le C_m\rho\delta(x),\qquad m\ge1.
 \label{eq:small-all-order-log}
\end{equation}
The same finite differentiation formula at $x=0$ gives
$|L^{(m)}(s+it)|\le C_m\rho$ for $m\ge1$.  Since
$n=d\mu(\rho)$ and $\mu(\rho)\simeq\rho$, this proves
\eqref{eq:all-order-phase-derivatives} in the small range.  Summing
\eqref{eq:small-all-order-log} over the coordinates and using
$t=u/\sqrt n$ gives
\[
 |\partial_u^j\log P(u,\xi)|
 \le C_j n^{-j/2}\mathcal E(\xi),
 \qquad 1\le j\le M.
\]
Together with the contraction \eqref{eq:small-product-contract},
\eqref{eq:finite-exp-bound} now gives
\eqref{eq:all-order-product-derivatives}.  Direct differentiation of the two
pole factors gives \eqref{eq:all-order-pole-derivatives}.

In a fixed critical window, the proof of \cref{lem:critical-strip} gives a
uniform zero-free strip.  Absolute convergence yields, for every fixed
$m$,
\[
 \left|\partial_s^m\{h(e^{s+it},x)-h(e^{s+it},0)\}\right|
 \le C_m\delta(x),
\]
and all radial derivatives of the theta factors are uniformly bounded.
The logarithmic stability estimate \eqref{eq:finite-log-stability} gives the
analogue of \eqref{eq:small-all-order-log} with $\rho\delta(x)$ replaced by
$\delta(x)$.  After summing over the coordinates and using $t=u/\sqrt d$,
\eqref{eq:finite-exp-bound} gives
\eqref{eq:all-order-product-derivatives}.  The phase and pole bounds,
\eqref{eq:all-order-phase-derivatives} and
\eqref{eq:all-order-pole-derivatives}, follow directly from compactness.

It remains to justify the arbitrary-order statement in the growing range.
Put $w=\tau+i\sigma_1$, $|\sigma_1|\le\tau/2$.  For $\Re w>0$, write
\[
 S(w):=2\sum_{m\ge1}e^{-\pi^2m^2/w}.
\]
For $a>0$ and every $j\ge0$,
\begin{equation}
 \partial_w^j e^{-a/w}=w^{-j}P_j(a/w)e^{-a/w},
 \label{eq:all-order-exp-derivative}
\end{equation}
where $P_j$ is a polynomial depending only on $j$.  Since
$\Re(1/w)\ge4/(5\tau)$, summing \eqref{eq:all-order-exp-derivative} in the
Poisson series and using \eqref{eq:exp-absorbs-powers} gives, after changing
$c>0$ from line to line,
\begin{equation}
 |\partial_w^jS(w)|\le C_j\tau^{-j}e^{-c/\tau},
 \qquad j\ge0,
 \label{eq:growing-all-order-S}
\end{equation}
while differentiation of
$L(-w)=\frac12\log\pi-\frac12\log w+\log(1+S(w))$ gives
\begin{equation}
 |L^{(j)}(-w)|\le C_j\tau^{-j},
 \qquad j\ge1.
 \label{eq:growing-all-order-L}
\end{equation}

For the quotient $R(w,x)=h(e^{-w},x)/h(e^{-w},0)$ we prove, for
$r=r(x)=\operatorname{dist}(x,\mathbb Z)$ and every $j\ge1$,
\begin{equation}
 |\partial_w^j\log R(w,x)|
 \le C_j\frac{r^2}{\tau^{j+1}}.
 \label{eq:growing-all-order-log}
\end{equation}
Suppose first that $0\le r\le1/4$.  Recall from
\eqref{eq:growing-nearest-image-explicit} that
$R(w,r)=e^{-\pi^2r^2/w}\frac{1+E(w,r)}{1+E(w,0)}$, where $E(w,r)$ is the even
remainder from the paired nonzero images.  Termwise differentiation and
\eqref{eq:all-order-exp-derivative} yield, for every fixed $j\ge0$,
\begin{equation}
 |\partial_r^2\partial_w^jE(w,r)|
 \le C_j\tau^{-2j-2}
       \sum_{m\ge1}m^{4j+4}e^{-cm^2/\tau}
 \le C_j\tau^{-2j-2}e^{-c/\tau}.
 \label{eq:growing-all-order-nearest-r2}
\end{equation}
Since $E(w,r)$ is even in $r$, integration twice in $r$ gives
\begin{equation}
 |\partial_w^j\{E(w,r)-E(w,0)\}|
 \le C_jr^2\tau^{-2j-2}e^{-c/\tau}.
 \label{eq:growing-all-order-nearest-difference}
\end{equation}
The same termwise estimate without the $r$-difference gives
\begin{equation}
 |\partial_w^jE(w,r)|+|\partial_w^jE(w,0)|
 \le C_j\tau^{-2j}e^{-c/\tau}.
 \label{eq:growing-all-order-nearest-plain}
\end{equation}
For $\tau$ small, $|E(w,r)|+|E(w,0)|\le1/4$.  For $j\ge1$ one may write
explicitly
\begin{equation}
 \partial_w^j\log(1+E)
 =\sum_{\nu_1+2\nu_2+\cdots+j\nu_j=j}
 C_{\nu}
 \frac{\prod_{m=1}^j(\partial_w^mE)^{\nu_m}}
 {(1+E)^{\nu_1+\cdots+\nu_j}},
 \label{eq:growing-log-faa}
\end{equation}
with constants depending only on $j$.  Subtracting the expressions at $r$
and at $0$ and telescoping the numerator products and the inverse powers of
$1+E$, we see from \eqref{eq:growing-all-order-nearest-difference}--
\eqref{eq:growing-all-order-nearest-plain} that every resulting term contains
a factor carrying $r^2$ together with an exponentially small factor.  Hence, for
some integer $M_j$,
\begin{equation}
 \left|\partial_w^j\left\{
 \log(1+E(w,r))-\log(1+E(w,0))\right\}\right|
 \le C_jr^2\tau^{-M_j}e^{-c/\tau}
 \le C_jr^2\tau^{-j-1}.
 \label{eq:growing-all-order-nearest-logerror}
\end{equation}
The last inequality follows from \eqref{eq:exp-absorbs-powers}.  Combining
this with
\[
 \left|\partial_w^j\left(-\frac{\pi^2r^2}{w}\right)\right|
 \le C_jr^2\tau^{-j-1}
\]
proves \eqref{eq:growing-all-order-log} for $r\le1/4$.

Now let $1/4\le r\le1/2$.  Put $b=b(r)=\pi^2(1-2r)$ and
$y=b/w$.  The noncancellation \eqref{eq:growing-two-image-noncancel} gives
\begin{equation}
 |1+e^{-y}|\ge\frac12,
 \qquad
 \Re y\ge0,
 \qquad
 |\Im y|\le\frac12\Re y.
 \label{eq:growing-all-order-sector}
\end{equation}
Let
\[
 F_0(y)=\log(1+e^{-y}),
 \qquad
 F_{j+1}(y)=-jF_j(y)-yF_j'(y).
\]
An induction using $\partial_w y=-y/w$ gives the exact identity
\begin{equation}
 \partial_w^jF_0(b/w)=w^{-j}F_j(b/w),
 \qquad j\ge0.
 \label{eq:growing-Fj}
\end{equation}
Every $F_j$ is a finite linear combination of terms
$y^mF_0^{(m)}(y)$, $0\le m\le j$.  On the sector
\eqref{eq:growing-all-order-sector}, the denominator $1+e^{-y}$ is bounded
away from zero; hence $F_0^{(m)}$ is bounded on compact subsets and, for
$m\ge1$, is $O_m(e^{-\Re y})$ as $\Re y\to\infty$.  Since
$|y|^me^{-\Re y}\le C_m$ there, we obtain
\begin{equation}
 \sup_{\substack{\Re y\ge0\\|\Im y|\le\frac12\Re y}}|F_j(y)|\le C_j,
 \qquad
 |\partial_w^jF_0(b/w)|\le C_j\tau^{-j}.
 \label{eq:growing-Fj-bound}
\end{equation}

It remains to check that the omitted Poisson images remain negligible after
arbitrarily many fixed derivatives.  The Gaussian separation in
\eqref{eq:growing-two-image-explicit} gives, for every $j\ge0$,
\begin{equation}
 |\partial_w^j\mathcal T(w,r)|
 +|\partial_w^j\mathcal T_0(w)|
 \le C_j\tau^{-2j}e^{-c/\tau}.
 \label{eq:growing-all-order-two-image-tail}
\end{equation}
Indeed, for $m\notin\{0,-1\}$ set
\[
 a_m(r)=\pi^2\big((m+r)^2-r^2\big).
\]
The relative quadratic gaps satisfy $a_m(r)\ge3\pi^2/2$ uniformly for
$1/4\le r\le1/2$, and \eqref{eq:all-order-exp-derivative} gives the
single-summand estimate
\[
 \left|\partial_w^j e^{-a_m(r)/w}\right|
 \le C_j\tau^{-j}
       \left(1+\frac{a_m(r)}{\tau}\right)^j
       e^{-4a_m(r)/(5\tau)}.
\]
The polynomial factor is absorbed by the exponential, and summing over the
quadratically growing gaps proves the estimate for $\mathcal T$.  The
denominator tail $\mathcal T_0$ is identical, with
$a_m(0)=\pi^2m^2$ for $m\ne0$.  This proves
\eqref{eq:growing-all-order-two-image-tail}.  Since
$|1+e^{-y}|\ge1/2$, put
\[
 U(w,r)=\frac{\mathcal T(w,r)}{1+e^{-y}}.
\]
Repeated product differentiation, \eqref{eq:growing-Fj-bound}, and
\eqref{eq:growing-all-order-two-image-tail} give, for every fixed $j$,
\[
 |\partial_w^jU(w,r)|\le C_j\tau^{-M_j}e^{-c/\tau}.
\]
After increasing the lower growing threshold, $|U|\le1/4$.  Applying the
finite formula \eqref{eq:growing-log-faa} with $E$ replaced by $U$ shows
that every positive $w$-derivative of $\log(1+U)$ still contains an
exponentially small factor; hence
\begin{equation}
 |\partial_w^j\log(1+U(w,r))|
 +|\partial_w^j\log(1+\mathcal T_0(w))|
 \le C_j\tau^{-M_j}e^{-c/\tau}.
 \label{eq:growing-all-order-two-image-logtail}
\end{equation}
Define the remainder
\[
 H(w,r):=\log(1+U(w,r))-\log(1+\mathcal T_0(w)).
\]
Then for every fixed $j$ and every $0\le m\le j$,
\begin{equation}
 |\partial_w^mH(w,r)|
 \le C_j\tau^{-M_j}e^{-c/\tau}.
 \label{eq:growing-all-order-H}
\end{equation}
Consequently
\[
 \log R(w,r)
 =-\frac{\pi^2r^2}{w}+F_0(b/w)+H(w,r).
\]
The first term is bounded by $C_jr^2\tau^{-j-1}$ after $j$ derivatives.
For the second, \eqref{eq:growing-Fj-bound} gives $C_j\tau^{-j}$; since
$r^2\ge1/16$, after increasing the lower growing threshold so that
$\tau\le1/16$,
\[
 \tau^{-j}\le16r^2\tau^{-j-1}.
\]
The exponentially small last term is absorbed by
\eqref{eq:exp-absorbs-powers}.  This proves
\eqref{eq:growing-all-order-log} in the two-image region as well.

Summing \eqref{eq:growing-all-order-log} over the coordinates and using
$t=\tau u/\sqrt d$ gives
\[
 |\partial_u^j\log P(u,\xi)|
 \le C_jd^{-j/2}\mathcal E_\tau(\xi).
\]
The exponential differentiation estimate \eqref{eq:finite-exp-bound},
together with the growing contraction, gives
\eqref{eq:all-order-product-derivatives}.  The pole estimates follow by
differentiating
\[
 \frac{1-e^{-\tau}}{1-e^{-\tau+i\tau u/\sqrt d}},
 \qquad
 \frac{1+e^{-\tau}}{1+e^{-\tau+i\tau u/\sqrt d}};
\]
each $u$-derivative contributes $\tau/\sqrt d$, while every extra inverse
power of $1-e^{-\tau}$ costs at most $C\tau^{-1}$.  Hence the net factor is
$d^{-1/2}$ per derivative.  Formula \eqref{eq:growing-all-order-L} gives
\eqref{eq:all-order-phase-derivatives} with $N=d$ and $\sigma=\tau$.

Finally consider the real shifts in
\eqref{eq:all-order-real-scale-derivatives}.  In the growing range write
$s'=-\tau'$; if $|s'-s|\le B\tau/\sqrt d$ and $d\ge4B^2$, then
\begin{equation}
 \frac12\tau\le\tau'\le\frac32\tau.
 \label{eq:growing-nearby-tau-comparable}
\end{equation}
Thus the preceding real-axis estimates hold at $s'$ with constants depending
only on the fixed derivative order and $B$.  In the small range the shifted
parameter remains in a fixed small-$\rho$ region after raising the lower
$n$-threshold, and in the critical range it remains in a fixed compact
subinterval of $(0,1)$.

At the shifted parameter, let $\mathcal E_{s'}(\xi)$ denote the corresponding
energy (with the shifted $\rho$ in the small range and the shifted $\tau'$ in
the growing range).  The logarithmic estimates above give,
for $1\le j\le M$,
\[
 \big|\sigma^j\partial_s^j\log\Gamma_{s'}(\xi)\big|
 \le C_{M,B}\mathcal E_{s'}(\xi),
\]
where $C_{M,B}$ is replaced by $C_{M,B,a,b}$ or $C_{M,B,A}$ in the critical
or growing range.  The product contraction gives
\[
 |\Gamma_{s'}(\xi)|\le e^{-c_B\mathcal E_{s'}(\xi)}.
\]
The shifted and unshifted energies are uniformly comparable by the preceding
parameter bounds.  Applying \eqref{eq:finite-exp-differentiation} to
$\Gamma_{s'}=\exp(\log\Gamma_{s'})$ therefore yields
\[
 \big|\sigma^r\partial_s^r\Gamma_{s'}(\xi)\big|
 \le C_{M,B}(1+\mathcal E_{s'}(\xi))^r
       e^{-c_B\mathcal E_{s'}(\xi)}
 \le C_{M,B},
 \qquad 0\le r\le M.
\]
This proves \eqref{eq:all-order-real-scale-derivatives} and completes the proof.
\end{proof}

\Needspace{5\baselineskip}
\begin{lemma}[nearby real scales]
\label{lem:nearby-real-scales}
Fix $K\ge1$ and $B>0$.  After increasing the fixed lower thresholds in the
three finite regimes, let
\[
 s_\ell=s+\ell\frac{\sigma}{\sqrt N},
 \qquad |\ell|\le B.
\]
Then $s_\ell<0$.  In the small range,
\begin{equation}
 \frac12\rho\le e^{s_\ell}\le2\rho\le2^{-10},
 \label{eq:nearby-small-rho}
\end{equation}
after increasing the lower $n$-threshold.  In the critical range all
$e^{s_\ell}$ lie in a fixed compact subinterval of $(0,1)$ containing the
original critical saddle parameters.  In the growing range, writing
$s_\ell=-\tau_\ell$, one has
\begin{equation}
 \frac12\tau\le\tau_\ell\le\frac32\tau.
 \label{eq:nearby-growing-tau}
\end{equation}
In particular,
\begin{equation}
 \sup_{|\ell|\le B}\sup_\xi
 \left|\sigma^r\partial_s^r\Gamma_{s_\ell}(\xi)\right|
 \le C_{K,B},
 \qquad 0\le r\le2K+2.
 \label{eq:nearby-real-scale-derivatives}
\end{equation}
The constant on the right of
\eqref{eq:nearby-real-scale-derivatives} is $C_{K,B}$ in the small range,
$C_{K,B,a,b}$ in the critical range, and $C_{K,B,A}$ in the growing range.
Moreover there are $c_{K,B}>0$ in the small range,
$q_{K,B,a,b}\in(0,1)$ in a fixed critical interval, and $c_{K,B,A}>0$ in
the growing range such that
\begin{equation}
 \sup_{|\ell|\le B}|\Gamma_{s_\ell}(\boldsymbol\omega)|
 \le
 \begin{cases}
 C_{K,B}e^{-c_{K,B}n},& n\le\alpha_0d,\\
 C_{K,B,a,b}q_{K,B,a,b}^d,& ad\le n\le bd,\\
 C_{K,B,A}e^{-c_{K,B,A}d/\tau},& \alpha_1d\le n\le Ad^2.
 \end{cases}
 \label{eq:nearby-cross-saddle}
\end{equation}
\end{lemma}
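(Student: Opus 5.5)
The plan is to verify the parameter localization of $s_\ell$ in each of the three ranges, then deduce \eqref{eq:nearby-real-scale-derivatives} from \eqref{eq:all-order-real-scale-derivatives}, and finally prove the cross-saddle bound \eqref{eq:nearby-cross-saddle} one coordinate at a time.

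\emph{Localization.} In the small range we have $\sigma=1$ and $N=n$, so $|s_\ell-s|\le B/\sqrt n$ and $e^{s_\ell}/\rho=e^{\ell/\sqrt n}\in[e^{-B/\sqrt n},e^{B/\sqrt n}]$. Once $n\ge(B/\log 2)^2$ this ratio lies in $[1/2,2]$. Since $\rho\le\rho_0=2^{-11}$, we get $2\rho\le2^{-10}$, which is \eqref{eq:nearby-small-rho}, and in particular $s_\ell<0$. In the critical range $\rho\in K_{a,b}=[\rho(a),\rho(b)]$ and $|s_\ell-s|\le B/\sqrt d$. Choosing $d$ large keeps $e^{s_\ell}$ inside $[\rho(a)/2,(1+\rho(b))/2]\Subset(0,1)$. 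In the growing range $\tau_\ell=\tau-\ell\tau/\sqrt d$, so $|\tau_\ell/\tau-1|\le B/\sqrt d\le1/2$ as soon as $d\ge4B^2$; this is \eqref{eq:nearby-growing-tau}, which was already noted in \eqref{eq:growing-nearby-tau-comparable}. The derivative bound \eqref{eq:nearby-real-scale-derivatives} is then exactly \eqref{eq:all-order-real-scale-derivatives} with $M=2K+2$, because $|s_\ell-s|\le B\sigma/\sqrt N$.

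\emph{Cross-saddle.} We have $\Gamma_{s_\ell}(\boldsymbol\omega)=\phi_{\rho_\ell}(1/2)^d$ with $\rho_\ell=e^{s_\ell}$, so it suffices to bound one factor.
\begin{itemize}
\item In the small range, the $k=0,\pm1$ separation used at the end of \cref{lem:small-central} gives $|\phi_{\rho_\ell}(1/2)|\le1-c\rho_\ell\le1-c\rho/2$, since that argument only needs $\rho_\ell\le2^{-7}$. Raising to the $d$th power and using $d\rho\simeq n$ from \eqref{eq:small-rho-var} yields $e^{-cn}$.
\item In the critical range, $\rho\mapsto h(\rho,1/2)/h(\rho)$ is continuous with modulus strictly less than $1$ on the enlarged compact interval. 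This follows from \eqref{eq:critical-real-log-gap}, or directly from positivity of the Poisson form, and gives some $q<1$.
\item In the growing range, \eqref{eq:growing-poisson} at $x=1/2$ gives $|\phi_{e^{-\tau_\ell}}(1/2)|\le Ce^{-c/\tau_\ell}\le Ce^{-c'/\tau}$, since $\tau_\ell\le\frac32\tau$. Taking the $d$th power gives $C^de^{-c'd/\tau}$. Since $\tau\le\tau_0$ is small, $C\le e^{c'/(2\tau)}$, so the product is at most $e^{-c'd/(2\tau)}$.
\end{itemize}

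The only genuinely delicate point is making sure that every threshold enlargement depends only on $K$, $B$ and the regime constants, and never on $d$, $n$ or $\xi$. The rest is bookkeeping on top of \cref{lem:all-order-central-bounds}.
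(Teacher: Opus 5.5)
Your proposal is correct and follows essentially the same route as the paper. It uses the same localization of $s_\ell$ in each regime ($e^{B/\sqrt n}\le2$, a slightly enlarged compact interval, and $d\ge4B^2$), reads the derivative bound off \eqref{eq:all-order-real-scale-derivatives}, and reruns the coordinatewise cross-saddle estimates at the shifted parameter. Your explicit absorption of the factor $C^d$ in the growing range via $C\le e^{c'/(2\tau)}$ makes precise a step the paper leaves implicit.
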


\begin{proof}
The derivative estimate is \eqref{eq:all-order-real-scale-derivatives}.  We
only spell out the stability of the ranges and of the cross-saddle decay.
In the growing range,
\[
 \tau_\ell=\tau\left(1-\frac{\ell}{\sqrt d}\right),
\]
so \eqref{eq:nearby-growing-tau} holds once $d\ge4B^2$.  Thus
$\tau_\ell\simeq_B\tau$, and the proof of
\eqref{eq:growing-cross-saddle}, applied at $\tau_\ell$, gives
\[
 |\Gamma_{s_\ell}(\boldsymbol\omega)|
 \le C_{K,B,A}e^{-cd/\tau_\ell}
 \le C_{K,B,A}e^{-c_{K,B,A}d/\tau}.
\]
In the critical range, $s$ varies in a compact subset of $(-\infty,0)$; the shifts are
$O_B(d^{-1/2})$, so a slightly larger compact interval contains every
$s_\ell$, and the compact contraction used in
\eqref{eq:critical-base-cross} yields
$C_{K,B,a,b}q_{K,B,a,b}^d$.  In the small range,
\[
 e^{s_\ell}=\rho\exp(\ell/\sqrt n).
\]
After increasing the lower $n$-threshold so that
$e^{B/\sqrt n}\le2$, we obtain \eqref{eq:nearby-small-rho}.  In particular
the shifted parameters stay inside the fixed region $\rho'\le2^{-10}$,
which is contained in the range $\rho'\le2^{-7}$ where the theta estimates
of \cref{lem:small-scale} are uniform.  Since $d e^{s_\ell}\simeq d\rho\simeq n$,
the proof of \eqref{eq:small-cross-saddle} gives
$C_{K,B}e^{-c_{K,B}n}$.  The same bounds show $s_\ell<0$ in all three cases
after increasing the fixed lower thresholds.
\end{proof}

\subsection{The arbitrary-order symmetric expansion}
Before stating the higher-order expansion, we record the normalization
relating the scaled integrals to the original Cauchy integrals.  Take
$(N,\sigma)$ from the corresponding row above, set
\[
 \varepsilon=N^{-1/2},\qquad t=\sigma\varepsilon u,
 \qquad \Psi(u)=d\Phi_\rho(t),\qquad P(u,\xi)=P_\rho(t,\xi),
\]
and let $E_0=\{|t|\le\eta\sigma\}$ be the positive central arc in that
range.  Define
\begin{equation}
 I_\pm(\xi)=\int_{-\eta\sqrt N}^{\eta\sqrt N}
 e^{\Psi(u)}\mathfrak a_\pm(u)P(u,\xi)\,du.
 \label{eq:higher-order-I}
\end{equation}
Then the change of variables $t=\sigma u/\sqrt N$ and
\eqref{eq:unified-pole-factor} give the exact identity
\begin{equation}
 J_\pm(\xi;E_0)
 =\frac{\sigma}{2\pi\sqrt N}\frac1{1\mp\rho}\,I_\pm(\xi).
 \label{eq:higher-order-JI-normalization}
\end{equation}
In particular,
\begin{equation}
 \frac{J_\pm(\xi;E_0)}{J_\pm(0;E_0)}
 =\frac{I_\pm(\xi)}{I_\pm(0)}.
 \label{eq:higher-order-JI-ratio}
\end{equation}

\begin{lemma}[higher-order symmetric saddle expansion]
\label{lem:higher-order-symmetric}
Fix $K\ge1$.  Suppose the central integral \eqref{eq:higher-order-I}
satisfies the bounds of \cref{lem:all-order-central-bounds} through order
$2K+3$.  Put $\varepsilon=N^{-1/2}$.  Then there are coefficients
$c_{j,r}^{\pm}$, depending on the saddle parameters but satisfying
\begin{equation}
 |c_{j,r}^{\pm}|\le C_K,
 \qquad 1\le j\le K-1,\quad 1\le r\le2j,
 \label{eq:higher-order-coefficient-bound}
\end{equation}
such that
\begin{equation}
 \frac{I_\pm(\xi)}{I_\pm(0)}
 =\Gamma_s(\xi)
 +\sum_{j=1}^{K-1}N^{-j}\sum_{r=1}^{2j}
 c_{j,r}^{\pm}\,\sigma^r\partial_s^r\Gamma_s(\xi)
 +\mathcal E_{K,\pm}(\xi),
 \label{eq:higher-order-ratio}
\end{equation}
where
\begin{equation}
 \sup_\xi|\mathcal E_{K,\pm}(\xi)|\le C_KN^{-K}.
 \label{eq:higher-order-ratio-error}
\end{equation}
Here the displayed $C_K$ is $C_K$, $C_{K,a,b}$, or $C_{K,A}$ in the small,
critical, or growing range.  Moreover, after increasing the corresponding
lower threshold if necessary,
\[
 |I_\pm(0)|\ge
 \begin{cases}
  c_K,&\text{small range},\\
  c_{K,a,b},&\text{critical range},\\
  c_{K,A},&\text{growing range}.
 \end{cases}
\]
\end{lemma}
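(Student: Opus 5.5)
The plan is to run the proof of \cref{lem:symmetric-saddle} to order $2K+3$ and convert the $u$-moments of $P$ into radial derivatives of $\Gamma_s$. The starting point is the exact identity $P(u,\xi)=\Gamma_{s+i\sigma\varepsilon u}(\xi)$, the analytic continuation along the zero-free strip. Because $\Gamma_z$ is holomorphic in $z=s+it$, we have $\partial_t=i\partial_s$. Therefore
\[
 \partial_u^mP(0,\xi)=(i\sigma\varepsilon)^m\partial_s^m\Gamma_s(\xi),
\]
and each $u$-Taylor coefficient of $P$ at $0$ is $N^{-m/2}$ times a bounded multiple of $\sigma^m\partial_s^m\Gamma_s(\xi)$. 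By \eqref{eq:all-order-real-scale-derivatives} with $B=0$, the quantities $\sigma^m\partial_s^m\Gamma_s$ are uniformly bounded.

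First, I truncate $I_\pm$ to $|u|\le R_N=M\sqrt{\log N}$. The tail costs $O(N^{-K-1})$ by \eqref{eq:all-order-phase-decay} and the bound $|P|\le1$.

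Second, on $|u|\le R_N$ I expand three factors. The phase is $e^{\Psi(u)+qu^2/2}=\sum_{k\ge3}\text{coefficients}\cdot\varepsilon^{k-2}u^k$, exponentiated. By \eqref{eq:all-order-phase-derivatives} the coefficients of $u^k$ in $\Psi+qu^2/2$ are $\varepsilon^{k-2}$ times bounded numbers, so $e^{\Psi+qu^2/2}=\sum_{m}\varepsilon^m p_m(u)$ with $p_m$ a polynomial of degree at most $3m$ and parity $(-1)^m$. The pole factor is $\mathfrak a_\pm(u)=\sum_r\varepsilon^r\beta_r u^r$ by \eqref{eq:all-order-pole-derivatives}. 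The amplitude is $P(u,\xi)=\sum_{m}\frac{(i\sigma\varepsilon u)^m}{m!}\partial_s^m\Gamma_s(\xi)$. I truncate all three at total order $\varepsilon^{2K}$. The Taylor remainders are controlled by \eqref{eq:all-order-phase-derivatives}, \eqref{eq:all-order-pole-derivatives} and \eqref{eq:all-order-product-derivatives} at order $2K+1$, and they cost $\varepsilon^{2K+1}(1+|u|)^{C_K}$.

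Third, I integrate against $e^{-qu^2/2}$. Every term of total odd power of $\varepsilon$ has odd parity in $u$ and integrates to zero. For this I use the parity convention that $\varepsilon^m$ always accompanies $u$-degree of parity $m$, which holds in every factor. The remainder at order $\varepsilon^{2K+1}$ is odd up to $\varepsilon^{2K+2}$, so after going one order further (this is why order $2K+3$ is assumed) the error is $O(N^{-K-1/2})$ or better. Hence
\[
 I_\pm(\xi)=\sum_{j=0}^{K-1}N^{-j}\sum_{r=0}^{2j}e^\pm_{j,r}\,\sigma^r\partial_s^r\Gamma_s(\xi)+O(N^{-K}),
\]
with bounded coefficients. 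The index $r$ is at most $2j$ because, in the $N^{-j}$ term, the power $\varepsilon^r$ coming from $P$ is at most $\varepsilon^{2j}$. The leading coefficient is $e_{0,0}=\sqrt{2\pi/q}$. At $\xi=0$, where $\Gamma_s\equiv1$ and its derivatives vanish, this gives $I_\pm(0)=\sum_j N^{-j}e^\pm_{j,0}+O(N^{-K})$. This yields the lower bound $|I_\pm(0)|\ge c$ after raising the threshold, as in \eqref{eq:symmetric-base}.

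Finally, I divide. I write $1/I_\pm(0)=\sum_{j<K}N^{-j}f_j+O(N^{-K})$ with $f_0=1/e_{0,0}$, multiply, and collect terms. The $r=0$ contributions combine to $\Gamma_s(\xi)\cdot I_\pm(0)/I_\pm(0)$, so they reduce exactly to $\Gamma_s$ plus $O(N^{-K})$. This gives \eqref{eq:higher-order-ratio} with $1\le r\le 2j$ and \eqref{eq:higher-order-coefficient-bound}.

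The main obstacle is the bookkeeping needed to make the remainder uniform in $\xi$ and in $d$. The Taylor remainder of $P$ has to use \eqref{eq:all-order-product-derivatives}, with its factor $(1+\mathcal E)^re^{-c\mathcal E}$ bounded uniformly, rather than a bound for the complex-scale $\Gamma_z$ that is not available off the real axis. I would first check that the needed bounds on the segment $|u|\le R_N$ follow from \eqref{eq:all-order-product-derivatives} at order $2K+2$.
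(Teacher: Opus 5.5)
Your proposal is correct and follows the paper's proof essentially step by step. Both arguments truncate to $|u|\le M_K\sqrt{\log N}$, take weighted Taylor jets of the phase, pole factor and product (using $\partial_u^mP(0,\xi)=(i\sigma\varepsilon)^m\partial_s^m\Gamma_s(\xi)$ and a real-variable remainder controlled by \eqref{eq:all-order-product-derivatives}), cancel odd weights by parity against $e^{-qu^2/2}$, and finish with a finite reciprocal expansion of $I_\pm(0)$. The one difference is bookkeeping: the paper truncates at weight $2K$, so its remainder is already $O(N^{-K})$, while your weight-$(2K+1)$ remainder gives $O(N^{-K-1/2})$ outright, so the extra ``one order further'' parity step you mention is unnecessary.
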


\begin{proof}
Set
\[
 X(u)=\Psi(u)+\frac q2u^2.
\]
Taylor's theorem through order $2K+1$, together with
\eqref{eq:all-order-phase-derivatives}, gives
\begin{equation}
 X(u)=\sum_{m=3}^{2K+1}\gamma_m(iu)^m\varepsilon^{m-2}+R_X(u),
 \qquad
 |R_X(u)|\le C_K\varepsilon^{2K}|u|^{2K+2},
 \label{eq:higher-order-phase-taylor}
\end{equation}
with $|\gamma_m|\le C_K$.  Indeed the Taylor remainder of order $2K+2$
has size
\[
 d\left(\frac{\sigma|u|}{\sqrt N}\right)^{2K+2}
 \sup_{|t|\le\eta\sigma}|L^{(2K+2)}(s+it)|
 \le C_KN^{-K}|u|^{2K+2}.
\]

Taylor's theorem for the normalized pole factor gives
\begin{equation}
 \mathfrak a_\pm(u)=\sum_{m=0}^{2K-1}a_m^\pm u^m\varepsilon^m+R_A(u),
 \qquad
 |a_m^\pm|\le C_K,
 \qquad
 |R_A(u)|\le C_K\varepsilon^{2K}|u|^{2K},
 \label{eq:higher-order-pole-taylor}
\end{equation}
where $a_0^\pm=1$; this follows directly from
\eqref{eq:all-order-pole-derivatives}.  For the product we use Taylor's
formula in the real variable $u$.  The analytic identity
$P(u,\xi)=\Gamma_{s+i\sigma\varepsilon u}(\xi)$ gives, at $u=0$,
\[
 \partial_u^mP(0,\xi)
 =(i\sigma\varepsilon)^m\partial_s^m\Gamma_s(\xi).
\]
Hence
\begin{equation}
 P(u,\xi)=\sum_{m=0}^{2K-1}
 \frac{(iu)^m\varepsilon^m}{m!}
 \sigma^m\partial_s^m\Gamma_s(\xi)+R_P(u,\xi),
 \label{eq:higher-order-product-taylor}
\end{equation}
and the integral remainder together with
\eqref{eq:all-order-product-derivatives} yields
\begin{equation}
 |R_P(u,\xi)|
 \le C_K\varepsilon^{2K}|u|^{2K}
 (1+\mathcal E(\xi))^{2K}e^{-c\mathcal E(\xi)}
 \le C_K\varepsilon^{2K}(1+|u|)^{2K}.
 \label{eq:higher-order-product-remainder}
\end{equation}

For fixed $K$, expand the exponential of the finite phase jet and retain the
terms of total $\varepsilon$-weight less than $2K$.  Multiplying these terms by
one term from each of the pole and product jets produces only finitely many
scalar polynomials.  Let $D_K^*$ and $B_K^*$ bound their degrees and the
absolute values of their coefficients, respectively.  These bounds depend
only on $K$ and on the fixed parameters of the range.  Put
\[
 c_*:=\min\{c,q_0/2\}>0.
\]
Choose $M_K$ so large that, for every polynomial
$Q(u)=\sum_{m=0}^{D_K^*}q_m u^m$ with $\max_m|q_m|\le B_K^*$ and every
$\beta\ge c_*$,
\begin{equation}
 \int_{|u|>M_K\sqrt{\log N}}|Q(u)|e^{-\beta u^2}\,du
 \le C_KN^{-K-2}.
 \label{eq:higher-order-gaussian-tail}
\end{equation}
This choice applies both to the Gaussian factor, where
$\beta=q/2\ge q_0/2$, and to the actual central phase, where the decay
rate is bounded below by $c$.  Put
\[
 \varepsilon=N^{-1/2},
 \qquad
 R_N=M_K\sqrt{\log N}.
\]
After increasing the fixed lower threshold we may assume
$R_N\le\eta\sqrt N$.  By \eqref{eq:all-order-phase-decay} and the product
and pole bounds, the part of \eqref{eq:higher-order-I} with
$R_N<|u|\le\eta\sqrt N$ is $O_K(N^{-K-2})$.

On $|u|\le R_N$, \eqref{eq:higher-order-phase-taylor} gives
\[
 |X(u)|
 \le C_K\sum_{\nu=1}^{2K-1}
       \varepsilon^\nu(1+R_N)^{\nu+2}
       +C_K\varepsilon^{2K}(1+R_N)^{2K+2}.
\]
The right-hand side tends to zero with $N$; increasing the lower threshold
once more, we may assume
\begin{equation}
 |X(u)|\le\frac12,
 \qquad |R_X(u)|\le1,
 \qquad |u|\le R_N.
 \label{eq:higher-order-X-small}
\end{equation}

Write the finite weighted phase jet as
\[
 X_0(u)=\sum_{\nu=1}^{2K-1}\varepsilon^\nu x_\nu(u),
 \qquad
 x_\nu(u)=\gamma_{\nu+2}(iu)^{\nu+2},
\]
so $X=X_0+R_X$ and $x_\nu(-u)=(-1)^\nu x_\nu(u)$.  By
\eqref{eq:higher-order-X-small}, $|X_0|\le3/2$ on $|u|\le R_N$, and
Taylor's formula gives
\[
 e^{X_0}=\sum_{j=0}^{2K-1}\frac{X_0^j}{j!}
 +O_K(|X_0|^{2K}).
\]
Since every monomial in $X_0$ has positive $\varepsilon$-weight and
\[
 |X_0(u)|\le C_K\varepsilon(1+|u|)^{2K+1},
\]
the Taylor remainder and all terms in the finite sum of total weight at
least $2K$ are bounded, for a fixed integer $D_K$, by
\[
 C_K\varepsilon^{2K}(1+|u|)^{D_K}.
\]
Moreover,
\[
 |e^X-e^{X_0}|
 \le e^{|X_0|+|R_X|}|R_X|
 \le C_K\varepsilon^{2K}(1+|u|)^{2K+2}.
\]
Thus, after collecting equal $\varepsilon$-weights,
\[
 e^X=\sum_{\nu=0}^{2K-1}\varepsilon^\nu Q_\nu(u)+R_e(u),
 \qquad
 |R_e(u)|\le C_K\varepsilon^{2K}(1+|u|)^{D_K},
\]
where $Q_\nu(-u)=(-1)^\nu Q_\nu(u)$.

Write the finite parts in \eqref{eq:higher-order-pole-taylor} and
\eqref{eq:higher-order-product-taylor} as $\mathfrak a_{\pm,0}$ and $P_0$,
respectively.  Multiplying
\[
 \left(\sum_{\nu=0}^{2K-1}\varepsilon^\nu Q_\nu+R_e\right)
 (\mathfrak a_{\pm,0}+R_A)(P_0+R_P),
\]
we retain only terms of total $\varepsilon$-weight less than $2K$.  Terms
formed entirely from the three finite jets and having total weight at
least $2K$ are
$O_K(\varepsilon^{2K}(1+|u|)^{D_K})$.  Every other discarded mixed term
contains at least one of $R_e$, $R_A$, or $R_P$.  The remainder bounds above,
together with the polynomial bounds for the remaining finite factors, give
the same estimate for each such term.
Consequently, on $|u|\le R_N$,
\begin{equation}
 e^{X(u)}\mathfrak a_\pm(u)P(u,\xi)
 =\sum_{\nu=0}^{2K-1}\varepsilon^\nu H_{\nu,\pm}(u,\xi)
 +R_K(u,\xi),
 \label{eq:higher-order-polynomial-expansion}
\end{equation}
where
\begin{equation}
 H_{\nu,\pm}(-u,\xi)=(-1)^\nu H_{\nu,\pm}(u,\xi),
 \qquad
 |R_K(u,\xi)|\le
 C_K\varepsilon^{2K}(1+|u|)^{D_K}.
 \label{eq:higher-order-parity-remainder}
\end{equation}
Indeed, a basic factor of weight $m$ has parity $m$, and multiplication
preserves the congruence between total weight and parity.  The quantities
$\sigma^r\partial_s^r\Gamma_s(\xi)$ occur only through the single product
jet \eqref{eq:higher-order-product-taylor}; the phase and pole jets are
independent of them.  Hence every $H_{\nu,\pm}$ is linear in these
quantities, and a derivative of order $r$ can occur only when $r\le\nu$.
In particular, for the even weight $\nu=2j$ one has $r\le2j$.

Multiplying \eqref{eq:higher-order-polynomial-expansion} by
$e^{-qu^2/2}$ and integrating over $[-R_N,R_N]$, every odd $\nu$ vanishes
exactly.  The remainder contributes at most
\[
 C_K\varepsilon^{2K}
 \int_{\mathbb R}(1+|u|)^{D_K}e^{-q_0u^2/2}\,du
 \le C_KN^{-K}.
\]
For each even coefficient we may replace the integral over $[-R_N,R_N]$ by the
integral over $\mathbb R$ using \eqref{eq:higher-order-gaussian-tail}.  Since
\begin{equation}
 \sup_{q_0\le q\le q_1}
 \int_{\mathbb R}|u|^m e^{-qu^2/2}\,du\le C_m,
 \qquad m\ge0,
 \label{eq:uniform-gaussian-moments}
\end{equation}
all resulting scalar coefficients are uniformly bounded.  Restoring the
original central tails gives
\begin{equation}
 I_\pm(\xi)
 =\sum_{j=0}^{K-1}N^{-j}
 \sum_{r=0}^{2j}b_{j,r}^{\pm}
 \sigma^r\partial_s^r\Gamma_s(\xi)
 +O_K(N^{-K}),
 \qquad |b_{j,r}^{\pm}|\le C_K.
 \label{eq:higher-order-I-expansion}
\end{equation}
The leading coefficient is
$b_{0,0}^{\pm}=\sqrt{2\pi/q}$, since $\mathfrak a_\pm(0)=1$.

Set
\[
 B_N^\pm=\sum_{j=0}^{K-1}b_{j,0}^{\pm}N^{-j}.
\]
Because $b_{0,0}^{\pm}\ge\sqrt{2\pi/q_1}>0$, after increasing the fixed
lower threshold we have $|B_N^\pm|\ge c_K$.  Since
$\Gamma_s(0)=1$ and $\partial_s^r\Gamma_s(0)=0$ for $r\ge1$,
\eqref{eq:higher-order-I-expansion} gives
\[
 I_\pm(0)=B_N^\pm+O_K(N^{-K}).
\]
Write the numerator as
\[
 I_\pm(\xi)=B_N^\pm\Gamma_s(\xi)+D_N^\pm(\xi)+O_K(N^{-K}),
\]
where $D_N^\pm$ is the sum in \eqref{eq:higher-order-I-expansion} over
$r\ge1$.  Define recursively
\[
 \beta_0^\pm=(b_{0,0}^\pm)^{-1},
 \qquad
 \beta_j^\pm
 =-(b_{0,0}^\pm)^{-1}
   \sum_{m=1}^j b_{m,0}^\pm\beta_{j-m}^\pm,
 \qquad 1\le j\le K-1.
\]
The bounds on $b_{j,0}^\pm$ and the positive lower bound for
$b_{0,0}^\pm$ give $|\beta_j^\pm|\le C_K$, and direct multiplication
shows
\[
 (B_N^\pm)^{-1}
 =\sum_{j=0}^{K-1}\beta_j^\pm N^{-j}+O_K(N^{-K}).
\]
Since $I_\pm(0)=B_N^\pm+O_K(N^{-K})$ and both quantities are bounded
away from zero,
\[
 I_\pm(0)^{-1}=(B_N^\pm)^{-1}+O_K(N^{-K}).
\]
Multiplying this finite reciprocal by $D_N^\pm$ preserves the same order
bound: a scalar factor $N^{-m}$ can only raise the total asymptotic order
from $j$ to $j+m$, while $r\le2j\le2(j+m)$.  Hence, using
$|I_\pm(0)|\ge c_K$,
\[
 \frac{I_\pm(\xi)}{I_\pm(0)}
 =\Gamma_s(\xi)
 +\sum_{j=1}^{K-1}N^{-j}\sum_{r=1}^{2j}
 c_{j,r}^{\pm}\sigma^r\partial_s^r\Gamma_s(\xi)
 +O_K(N^{-K}).
\]
The bounded Gaussian moments and the finite reciprocal recursion give
\eqref{eq:higher-order-coefficient-bound}.  This proves
\eqref{eq:higher-order-ratio}--\eqref{eq:higher-order-ratio-error}; the
normalization in \eqref{eq:higher-order-JI-normalization} then restores the
original $J_\pm$ integrals.
\end{proof}

\subsection{Finite differences at nearby Gaussian scales}

The differential corrections in \eqref{eq:higher-order-ratio} can be
replaced by normalized Gaussians at neighboring real scales.

\begin{lemma}[finite differences at nearby Gaussian scales]
\label{lem:gaussian-finite-difference}
For every $K\ge1$ there is an integer $\Lambda_K$ and coefficients
$b_{r,\ell}^{(K)}$, $|\ell|\le \Lambda_K$, $0\le r\le2K-2$, such that
\begin{equation}
 \sum_{|\ell|\le \Lambda_K}b_{r,\ell}^{(K)}\ell^m
 =r!\,\mathbf 1_{\{m=r\}},
 \qquad 0\le m\le2K-1.
 \label{eq:finite-difference-moments}
\end{equation}
Consequently, if $\vartheta=\sigma/\sqrt N$ and $F$ is $C^{2K}$ on the
interval $|y-s|\le \Lambda_K\vartheta$, then
\begin{equation}
 \vartheta^rF^{(r)}(s)
 =\sum_{|\ell|\le \Lambda_K}b_{r,\ell}^{(K)}F(s+\ell\vartheta)
 +O_K\left(\vartheta^{2K}
 \sup_{|y-s|\le \Lambda_K\vartheta}|F^{(2K)}(y)|\right).
 \label{eq:finite-difference-formula}
\end{equation}
\end{lemma}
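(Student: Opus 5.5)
Fix $\Lambda_K=K$ (any $\Lambda_K\ge K-1$ would do), so that the node set $\{-\Lambda_K,\dots,\Lambda_K\}$ has $2K+1\ge2K$ points. The moment conditions \eqref{eq:finite-difference-moments} form, for each fixed $r$, a linear system of $2K$ equations in $2\Lambda_K+1$ unknowns $b_{r,\ell}^{(K)}$. Its coefficient matrix $(\ell^m)_{0\le m\le 2K-1,\ |\ell|\le\Lambda_K}$ contains a $2K\times2K$ Vandermonde block on $2K$ distinct nodes, which is invertible. The system is therefore solvable. Setting the unused unknowns to zero and inverting the Vandermonde block gives coefficients that depend only on $K$ and $r$. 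Since $0\le r\le 2K-2<2K$, the target vector $r!\,e_r$ lies in the range of indices $0\le m\le 2K-1$, so every required $r$ is covered. Equivalently, one can take $b_{r,\ell}^{(K)}=r!$ times the coefficient of $\ell$ in the $r$th-degree coefficient of the Lagrange interpolation basis on these nodes, namely $b_{r,\ell}^{(K)}=L_\ell^{(r)}(0)$ for the Lagrange basis polynomials $L_\ell$ of degree $\le 2K-1$ on the chosen nodes.

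For the formula \eqref{eq:finite-difference-formula}, Taylor-expand $F$ at $s$ to order $2K-1$ with Lagrange (or integral) remainder:
\[
 F(s+\ell\vartheta)=\sum_{m=0}^{2K-1}\frac{(\ell\vartheta)^m}{m!}F^{(m)}(s)+\mathcal R_\ell,
 \qquad
 |\mathcal R_\ell|\le\frac{(|\ell|\vartheta)^{2K}}{(2K)!}\sup_{|y-s|\le\Lambda_K\vartheta}|F^{(2K)}(y)|.
\]
This is valid because every node $s+\ell\vartheta$ lies in the interval $|y-s|\le\Lambda_K\vartheta$ where $F$ is $C^{2K}$. Multiply by $b_{r,\ell}^{(K)}$ and sum over $|\ell|\le\Lambda_K$. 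By \eqref{eq:finite-difference-moments}, the polynomial part collapses to $\vartheta^rF^{(r)}(s)$. The remainders contribute at most
\[
 \Big(\sum_\ell|b_{r,\ell}^{(K)}|\,\Lambda_K^{2K}\Big)\vartheta^{2K}\sup|F^{(2K)}|,
\]
and the prefactor is a constant depending only on $K$.

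No step is a real obstacle. The only point needing care is that the coefficients must be independent of $\vartheta$, and hence of $d$, $n$ and the saddle parameters. This holds because the moment system is posed in the integer variable $\ell$ rather than in $\ell\vartheta$: the scaling $\vartheta^m$ factors out of each Taylor term exactly. The lemma is stated for real $C^{2K}$ functions, so the later application to $F(y)=\Gamma_y(\xi)$ will rely on \eqref{eq:nearby-real-scale-derivatives} to bound $\vartheta^{2K}\sup|F^{(2K)}|$ by $C_K N^{-K}$. That bound is not needed for the present statement.
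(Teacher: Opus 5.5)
Your proof is correct and is the paper's own argument: take $\Lambda_K=K$, invert a $2K\times 2K$ Vandermonde block on $2K$ of the nodes (with zero coefficients at the unused node), and Taylor-expand $F(s+\ell\vartheta)$ through degree $2K-1$. One slip, in your parenthetical only: $\Lambda_K=K-1$ does \emph{not} suffice, because it gives just $2K-1$ nodes and the odd-$r$ systems become inconsistent. For example, for $K=2$, $r=1$ on the nodes $\{-1,0,1\}$, the $m=1$ and $m=3$ equations both read $b_1-b_{-1}$, but must equal $1$ and $0$ respectively. So you need $\Lambda_K\ge K$, which is exactly your actual choice.
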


\begin{proof}
Choose $2K$ distinct integers among $\{-K,\ldots,K\}$ and solve the
Vandermonde system \eqref{eq:finite-difference-moments}; one may take
$\Lambda_K=K$.  Set the coefficients at the unused nodes equal to zero.  Taylor
expansion of $F(s+\ell\vartheta)$ through degree $2K-1$ then gives
\eqref{eq:finite-difference-formula}.  All coefficients depend only on $K$.
\end{proof}

Combining \cref{lem:higher-order-symmetric,lem:gaussian-finite-difference}
and \eqref{eq:all-order-real-scale-derivatives} gives the following form
which is adapted to maximal estimates.

\begin{proposition}[finite Gaussian form of the central ratios]
\label{prop:finite-gaussian-central}
For each fixed $K$ and each sign $\pm$, there are an integer $\Lambda_K$ and
a level $n_K$.  For every fixed $0<a\le b<\infty$ there is a dimension
$d_{K,a,b}$, and for every fixed $A>0$ there are
$\alpha_{1,K}(A)$ and $d_{K,A}$, such that
\begin{equation}
 \frac{I_\pm(\xi)}{I_\pm(0)}
 =\sum_{|\ell|\le\Lambda_K}c_{\ell,d,n}^{\pm}
 \Gamma_{d,\rho_{\ell,d,n}^{\pm}}(\xi)
 +O(N^{-K})
 \label{eq:finite-gaussian-central}
\end{equation}
holds in the small range when $n_K\le n\le\alpha_0d$, in the critical
range when $d\ge d_{K,a,b}$ and $ad\le n\le bd$, and in the growing range
when $d\ge d_{K,A}$ and $\alpha_{1,K}(A)d\le n\le Ad^2$.  Every
$\rho_{\ell,d,n}^{\pm}$ lies in $(0,1)$, and
\begin{equation}
 \sum_{|\ell|\le\Lambda_K}|c_{\ell,d,n}^{\pm}|
 \le C.
 \label{eq:finite-gaussian-coefficient-sum}
\end{equation}
The constant in this estimate and the implicit constant in
\eqref{eq:finite-gaussian-central} may be taken to be $C_K$,
$C_{K,a,b}$, and $C_{K,A}$ in the small, critical, and growing ranges,
respectively; the error is uniform in $\xi$.
\end{proposition}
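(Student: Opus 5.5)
We start from the expansion \eqref{eq:higher-order-ratio} of \cref{lem:higher-order-symmetric}, which in each range is available above the thresholds of \cref{lem:all-order-central-bounds} taken through order $2K+3$. It writes $I_\pm(\xi)/I_\pm(0)$ as $\Gamma_s(\xi)$, plus the corrections $N^{-j}c^\pm_{j,r}\sigma^r\partial_s^r\Gamma_s(\xi)$ with $1\le j\le K-1$, $1\le r\le 2j\le 2K-2$ and $|c^\pm_{j,r}|\le C_K$, plus an error $O(N^{-K})$. The plan is to replace each derivative term by a finite difference of genuine Gaussians at nearby real scales.

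Apply \cref{lem:gaussian-finite-difference} to $F(y)=\Gamma_y(\xi)$ (for fixed $\xi$) with $\vartheta=\sigma/\sqrt N$ and $\Lambda_K=K$. For $r\le 2K-2$ we have $\sigma^r\partial_s^r\Gamma_s=N^{r/2}\vartheta^r\partial_s^r\Gamma_s$, and \eqref{eq:finite-difference-formula} gives
\[
 \sigma^r\partial_s^r\Gamma_s(\xi)=N^{r/2}\sum_{|\ell|\le K}b^{(K)}_{r,\ell}\Gamma_{s_\ell}(\xi)
 +O_K\Big(N^{r/2}\vartheta^{2K}\sup_{|y-s|\le K\vartheta}|\partial_y^{2K}\Gamma_y(\xi)|\Big),
\]
where $s_\ell=s+\ell\vartheta$. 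By \eqref{eq:all-order-real-scale-derivatives}, applied with $B=K$, we have $\sup|\sigma^{2K}\partial^{2K}\Gamma_{s'}|\le C_K$, so the error is $O(N^{r/2}N^{-K})$. Multiplying by $N^{-j}$ and using $r\le 2j$ leaves $O(N^{j-K-j})$, so every error stays $O(N^{-K})$. This is exactly where the structural order bound $r\le 2j$ from \cref{lem:higher-order-symmetric} is essential.

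Collecting terms, we set $\rho^\pm_{\ell,d,n}=e^{s_\ell}$ and
\[
 c^\pm_{\ell,d,n}=\mathbf 1_{\ell=0}+\sum_{j=1}^{K-1}\sum_{r=1}^{2j}N^{r/2-j}c^\pm_{j,r}b^{(K)}_{r,\ell}.
\]
Since $N^{r/2-j}\le1$, the coefficient sum \eqref{eq:finite-gaussian-coefficient-sum} is bounded by $C_K$, with the regime dependence inherited from the $c^\pm_{j,r}$. \Cref{lem:nearby-real-scales} with $B=K$ guarantees $s_\ell<0$, so each $\rho_\ell\in(0,1)$. It also shows that the shifted parameters stay inside the regions where the derivative bounds hold: the small-$\rho$ region, a compact subinterval in the critical range, and $\tau_\ell\simeq\tau$ in the growing range. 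This is what justifies using \eqref{eq:all-order-real-scale-derivatives} on the whole interval $|y-s|\le K\vartheta$. Note that the scales do not depend on the sign, so the $\pm$ superscript on $\rho$ is harmless.

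The main obstacle is bookkeeping the thresholds. One must take the maximum of the thresholds from \cref{lem:all-order-central-bounds} (at order $M=2K+3$), from \cref{lem:higher-order-symmetric}, and from \cref{lem:nearby-real-scales} with $B=K$. This defines $n_K$, $d_{K,a,b}$, and $\alpha_{1,K}(A)$, $d_{K,A}$. All of these are fixed after $K$ and the regime parameters are chosen. One must also check that the sup in the finite-difference remainder is uniform in $\xi$, which holds because the bound in \eqref{eq:all-order-real-scale-derivatives} is already uniform in $\xi$.
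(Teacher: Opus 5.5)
Your proposal is correct and follows the paper's proof essentially step for step. You rewrite $N^{-j}\sigma^r\partial_s^r\Gamma_s$ as $N^{r/2-j}\vartheta^r\partial_s^r\Gamma_s$ with $\vartheta=\sigma N^{-1/2}$, replace it by the finite difference of \cref{lem:gaussian-finite-difference}, bound the remainder by the nearby-scale derivative estimates, use $r\le 2j$ to keep both prefactors and errors at most $N^{-K}$, and use \cref{lem:nearby-real-scales} to check that the shifted parameters are admissible; your explicit coefficient formula and threshold bookkeeping only spell out what the paper leaves implicit.
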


\begin{proof}
In a term of \eqref{eq:higher-order-ratio}, put
$\vartheta=\sigma N^{-1/2}$.  Since $r\le2j$,
\[
 N^{-j}\sigma^r\partial_s^r\Gamma_s
 =N^{-j+r/2}\,\vartheta^r\partial_s^r\Gamma_s.
\]
The prefactor $N^{-j+r/2}$ is at most one.  Apply
\eqref{eq:finite-difference-formula} to $F(s)=\Gamma_s(\xi)$.  The error
there is
\[
 \vartheta^{2K}\sup|\partial_s^{2K}\Gamma|
 \le C_KN^{-K}
\]
by \eqref{eq:nearby-real-scale-derivatives}; multiplication by
$N^{-j+r/2}$ does not enlarge it.  There are only finitely many pairs
$(j,r)$, so all errors are absorbed into $O_K(N^{-K})$.

The real shifts are precisely $s_\ell=s+\ell\vartheta$.  Apply
\cref{lem:nearby-real-scales} with $B=\Lambda_K$.  After increasing the fixed
lower threshold, every $s_\ell$ is negative and lies in the corresponding
uniform parameter region described there; in the growing case, writing
$s_\ell=-\tau_\ell$, one has $\tau/2\le\tau_\ell\le3\tau/2$.
Thus every term produced by the finite difference is a genuine normalized
Gaussian multiplier at a real parameter for which the same uniform estimates
apply.  The moment coefficients in
\cref{lem:gaussian-finite-difference}, together with
\eqref{eq:higher-order-coefficient-bound} and $N^{-j+r/2}\le1$, give
\eqref{eq:finite-gaussian-coefficient-sum}.
\end{proof}

\subsection{The \texorpdfstring{$K$}{K}th-order two-saddle residual}

To distinguish the constructions on overlapping ranges, let
$\mathfrak r\in\{\mathrm{sm},\mathrm{cr},\mathrm{gr}\}$ denote the small,
critical, or growing regime.  For a central ratio in regime $\mathfrak r$,
let $Q_{d,n,\mathfrak r,+}^{[K]}$ and $Q_{d,n,\mathfrak r,-}^{[K]}$ denote
the two finite Gaussian sums furnished by
\cref{prop:finite-gaussian-central}.  Define
\begin{equation}
 \mathsf G_{d,n,\mathfrak r}^{[K]}(\xi)
 =Q_{d,n,\mathfrak r,+}^{[K]}(\xi)
 +a_{d,n}Q_{d,n,\mathfrak r,-}^{[K]}(\xi-\boldsymbol\omega).
 \label{eq:K-model}
\end{equation}
Let $\mathcal G_{d,n,\mathfrak r}^{[K]}$ be the corresponding finite linear
combination of normalized Gaussian convolution operators and their parity
conjugates.  For finitely supported $f$ it is equivalently specified by
\begin{equation}
 \widehat{\mathcal G_{d,n,\mathfrak r}^{[K]}f}(\xi)
 =\mathsf G_{d,n,\mathfrak r}^{[K]}(\xi)\widehat f(\xi).
 \label{eq:K-model-operator}
\end{equation}
By \eqref{eq:finite-gaussian-coefficient-sum}, the coefficient sums are uniformly
bounded.  Since normalized Gaussian convolutions and their parity conjugates are
$\ell^p$-contractions, $\mathcal G_{d,n,\mathfrak r}^{[K]}$ extends to a bounded
operator on every $\ell^p(\mathbb Z^d)$, $1\le p\le\infty$.  Each regime uses
its own model on overlaps.

\begin{theorem}[arbitrary-order two-saddle residual]
\label{thm:K-residual}
For every integer $K\ge1$ there is a level $n_K$.  For every fixed
$0<a\le b<\infty$ there is a dimension $d_{K,a,b}$, and for every fixed
$A>0$ there are $\alpha_{1,K}(A)$ and $d_{K,A}$.  The models in
\eqref{eq:K-model} may be chosen so that the sums of the absolute values of
the coefficients in each branch are bounded respectively by
$C_K$, $C_{K,a,b}$, and $C_{K,A}$ in the three regimes, and so that
\begin{align}
 \sup_\xi|m_{d,n}(\xi)-\mathsf G_{d,n,\mathrm{sm}}^{[K]}(\xi)|
 &\le C_K n^{-K},
 && n_K\le n\le\alpha_0d,
 \label{eq:K-small-residual}\\
 \sup_\xi|m_{d,n}(\xi)-\mathsf G_{d,n,\mathrm{cr}}^{[K]}(\xi)|
 &\le C_{K,a,b}d^{-K},
 && d\ge d_{K,a,b},\quad ad\le n\le bd,
 \label{eq:K-critical-residual}\\
 \sup_\xi|m_{d,n}(\xi)-\mathsf G_{d,n,\mathrm{gr}}^{[K]}(\xi)|
 &\le C_{K,A}d^{-K},
 && d\ge d_{K,A},\quad \alpha_{1,K}(A)d\le n\le Ad^2.
 \label{eq:K-growing-residual}
\end{align}
The constants are independent of $d$ and $n$.
\end{theorem}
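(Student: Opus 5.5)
The plan is to repeat the first-order residual argument (the proofs of \cref{prop:small-residual}, \cref{prop:critical-residual} and \cref{thm:growing-residual}), replacing the first-order central ratio estimates by \cref{prop:finite-gaussian-central}. Fix a regime $\mathfrak r$ and split the full contour for $J_+(\xi;[-\pi,\pi])$ into $E_0$, $E_\pi$ and the remote set. The identity \eqref{eq:negative-arc-transport} turns the $E_\pi$ contribution into $(-1)^nJ_-(\xi-\boldsymbol\omega;E_0)$. With $D=J_+(0;[-\pi,\pi])$, $c_+=J_+(0;E_0)/D$ and $c_-=(-1)^nJ_-(0;E_0)/D$, this gives the exact decomposition
\[
 m_{d,n}(\xi)=c_+F_+(\xi)+c_-F_-(\xi-\boldsymbol\omega)+E_{\rm rem}(\xi),
 \qquad F_\pm=\frac{J_\pm(\cdot;E_0)}{J_\pm(0;E_0)}=\frac{I_\pm}{I_\pm(0)},
\]
where the last equality is \eqref{eq:higher-order-JI-ratio}. \Cref{prop:finite-gaussian-central} gives $F_\pm=Q^{[K]}_{d,n,\mathfrak r,\pm}+O(N^{-K})$ uniformly in $\xi$, with bounded coefficient sums.

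\textbf{Remote contour.} In the small range I use \eqref{eq:small-remote-product}; it gives $O(e^{-cn})$, which is $O(\sqrt n\,e^{-cn})$ relative to $|J_+(0;E_0)|\asymp n^{-1/2}$, and hence $\le C_Kn^{-K}$. In the critical range \eqref{eq:critical-remote} gives $q^d$ times a polynomial factor. In the growing range I use \eqref{eq:growing-remote-budget} with $K$ (or $K+1$). For the lower bounds on $|J_\pm(0;E_0)|$ I use \eqref{eq:higher-order-JI-normalization} together with the lower bound for $|I_\pm(0)|$ from \cref{lem:higher-order-symmetric}. This makes $\|E_{\rm rem}\|_\infty=O(N^{-K})$.

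\textbf{Coefficients.} This is where the first-order argument lost a full power, and it is the main step. Evaluate the decomposition at $\xi=0$. Then $1=c_++c_-F_-(\boldsymbol\omega)+O(N^{-K})$, since $F_+(0)=1$. By \cref{prop:finite-gaussian-central}, $F_-(\boldsymbol\omega)=Q_-(\boldsymbol\omega)+O(N^{-K})$. The sum $Q_-(\boldsymbol\omega)$ is a bounded combination of $\Gamma_{s_\ell}(\boldsymbol\omega)$, and each of these is exponentially small by \eqref{eq:nearby-cross-saddle}: $e^{-cn}$, $q^d$, or $e^{-cd/\tau}\le e^{-cd}$ respectively. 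Since $|c_-|\le C$ (by the ratio of base sizes, as before), this gives $c_+=1+O(N^{-K})$ rather than $1+O(N^{-1})$.

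Similarly, evaluate at $\xi=\boldsymbol\omega$, using $m_{d,n}(\boldsymbol\omega)=a_{d,n}$ and $F_-(0)=1$. This gives $a_{d,n}=c_+F_+(\boldsymbol\omega)+c_-+O(N^{-K})$. Here $F_+(\boldsymbol\omega)=Q_+(\boldsymbol\omega)+O(N^{-K})$ is again exponentially small, so $c_-=a_{d,n}+O(N^{-K})$. The exponential bounds convert to $C_Kn^{-K}$ or $C_Kd^{-K}$ because $N=n$ in the small range and $N=d$ otherwise, and $d/\tau\ge d$ in the growing range.

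\textbf{Assembly.} The triangle inequality then gives
\[
 |m_{d,n}-\mathsf G^{[K]}_{d,n,\mathfrak r}|
 \le|c_+-1|\,|F_+|+|F_+-Q_+|+|c_--a_{d,n}|\,|F_-|+|a_{d,n}|\,|F_--Q_-|+O(N^{-K}).
\]
Here $|F_\pm|\le C$ because the $Q_\pm$ are bounded (they have bounded coefficient sums, and normalized Gaussian multipliers have modulus at most one), and $|a_{d,n}|\le1$. So the residual is $O(N^{-K})$ in every regime.

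The thresholds $n_K$, $d_{K,a,b}$, $\alpha_{1,K}(A)$ and $d_{K,A}$ are the maxima of those in \cref{lem:all-order-central-bounds}, \cref{prop:finite-gaussian-central} and \cref{lem:growing-remote}, applied with the expansion order at least $2K+3$. The only other point to check is that $I_\pm(0)$ stays bounded below after enlarging these thresholds.
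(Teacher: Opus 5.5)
Your proposal is correct and follows the paper's proof essentially step for step. It uses the same exact decomposition $m_{d,n}=c_+F_+ + c_-F_-(\cdot-\boldsymbol\omega)+E_{\rm rem}$ and the same remote bounds relative to $J_+(0;E_0)$. It also recovers $c_+=1+O(N^{-K})$ and $c_-=a_{d,n}+O(N^{-K})$ the same way, by evaluating at $\xi=0$ and $\xi=\boldsymbol\omega$ with the exponentially small cross-saddle values of the nearby Gaussian scales.

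One step needs reordering. Your claims $\|E_{\rm rem}\|_\infty=O(N^{-K})$ and $|c_-|\le C$ both tacitly need $|D|\gtrsim|J_+(0;E_0)|$. So first divide the $\xi=0$ contour identity by $J_+(0;E_0)$ rather than by $D$, which gives $D/J_+(0;E_0)=1+O(N^{-K})$; this is exactly how the paper proceeds.
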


\begin{proof}
Fix a regime and omit $\mathfrak r$ from the central quantities.  We recover
the two coefficients from the values at $0$ and $\boldsymbol\omega$; all
constants retain the dependence stated in the theorem.  In the growing
regime we enlarge the thresholds, if necessary, so
that $\alpha_{1,K}(A)\ge\alpha_r(A)$ and
$d_{K,A}\ge d_{r,K}(A)$ from \cref{lem:growing-remote}.  Let $E_0$ be the
positive central arc, and let $T_+(\xi)$ denote the contribution of the
complement of the two central arcs to the unnormalized integral
$J_+(\xi;[-\pi,\pi])$.  Write
\[
 D=J_+(0;[-\pi,\pi]),\qquad
 c_+=\frac{J_+(0;E_0)}D,\qquad
 c_-=\frac{(-1)^nJ_-(0;E_0)}D.
\]
The exact contour identity has the form
\begin{equation}
 m_{d,n}(\xi)=c_+F_+(\xi)+c_-F_-(\xi-\boldsymbol\omega)
 +E_{\rm rem}(\xi),
 \label{eq:K-exact-decomposition}
\end{equation}
where
\[
 F_\pm(\xi)=\frac{J_\pm(\xi;E_0)}{J_\pm(0;E_0)},
 \qquad
 E_{\rm rem}(\xi)=\frac{T_+(\xi)}D.
\]

For every fixed $K$,
\begin{equation}
 \frac{\sup_\xi|T_+(\xi)|}{|J_+(0;E_0)|}\le C_KN^{-K},
 \label{eq:K-raw-remote}
\end{equation}
with the fixed-range dependence stated in the theorem.  In the small range
the left-hand side is bounded by $C\sqrt n\,e^{-cn}$; in a fixed critical
window it is bounded by $C\sqrt d\,q^d$ with $q<1$; and in the growing range
the assertion follows from \cref{lem:growing-remote} with expansion order
$K$.
The same is true of the cross-saddle values of every real Gaussian scale
appearing in \cref{prop:finite-gaussian-central}.  Indeed,
\cref{lem:nearby-real-scales} gives uniformly over all those finitely many
scales
\begin{equation}
 |\Gamma_{s_\ell}(\boldsymbol\omega)|
 \le
 \begin{cases}
 C_Ke^{-c_Kn},& n\le\alpha_0d,\\
 C_{K,a,b}q_{K,a,b}^d,& ad\le n\le bd,\\
 C_{K,A}e^{-c_{K,A}d/\tau},& \alpha_{1,K}(A)d\le n\le Ad^2.
 \end{cases}
 \label{eq:K-nearby-cross-saddle}
\end{equation}
Each right-hand side is $O_K(N^{-K})$ above the fixed $K$-dependent lower
threshold.

The central base estimates proved earlier also give
\[
 \left|\frac{J_-(0;E_0)}{J_+(0;E_0)}\right|\le C
\]
in the small and critical ranges, and $O(\tau)$ in the growing range.
Moreover \cref{prop:finite-gaussian-central} evaluated at
$\boldsymbol\omega$, together with the preceding cross-saddle estimates,
gives
\[
 |F_-(\boldsymbol\omega)|\le C_KN^{-K}+\text{(exponentially small)}.
\]
At frequency zero the contour decomposition gives
\[
 D=J_+(0;E_0)+(-1)^nJ_-(\boldsymbol\omega;E_0)
   +T_+(0).
\]
Dividing by $J_+(0;E_0)$, we obtain
\[
 \frac{D}{J_+(0;E_0)}
 =1+(-1)^n\frac{J_-(0;E_0)}{J_+(0;E_0)}
       F_-(\boldsymbol\omega)
       +\frac{T_+(0)}{J_+(0;E_0)}
 =1+O_K(N^{-K}).
\]
Consequently, \eqref{eq:K-raw-remote} also gives
$\|E_{\rm rem}\|_{L^\infty}\le C_KN^{-K}$, and
\begin{equation}
 c_+=\frac{J_+(0;E_0)}D=1+O_K(N^{-K}),
 \qquad |c_-|\le C_K.
 \label{eq:K-cplus}
\end{equation}
Now evaluate \eqref{eq:K-exact-decomposition} at
$\xi=\boldsymbol\omega$.  The exact parity identity gives
$m_{d,n}(\boldsymbol\omega)=a_{d,n}$, while $F_-(0)=1$ and
\cref{prop:finite-gaussian-central,lem:nearby-real-scales} give
$|F_+(\boldsymbol\omega)|\le C_KN^{-K}$.  Together with the remote
estimate and $|c_+|\le C_K$, this yields
\begin{equation}
 c_-=a_{d,n}+O_K(N^{-K}).
 \label{eq:K-cminus}
\end{equation}
Combining \eqref{eq:K-exact-decomposition}--\eqref{eq:K-cminus} with
\cref{prop:finite-gaussian-central} proves the three displayed residual
bounds above the stated thresholds.
\end{proof}
\section{Proof of the full \texorpdfstring{$\ell^p$}{lp} theorem}
\label{sec:assembly}

For $1<p<2$, we interpolate each fixed-level residual between $\ell^1$ and
$\ell^2$ and use the normalized Gaussian maximal theorem for the model.

\subsection{The Gaussian models and the residuals}

Let $1<p<\infty$.  For fixed $K$, and with $a,b,A$ fixed when the
corresponding regime is considered, set
\[
 \begin{aligned}
 \mathcal R_{\mathrm{sm}}(d)&=\{n:0\le n\le\alpha_0d\},\\
 \mathcal R_{\mathrm{cr}}(d)&=\{n:ad\le n\le bd\},\\
 \mathcal R_{\mathrm{gr}}(d)&=\{n:\alpha_{1,K}(A)d\le n\le Ad^2\}.
 \end{aligned}
\]
We interpret a supremum over an empty range as zero.  Below the asymptotic
thresholds we set the corresponding approximating operator equal to zero.
The exceptional sets contain uniformly bounded numbers of levels: at most
$n_K$ in the small range, at most $bd_{K,a,b}+1$ in the critical range, and
at most $Ad_{K,A}^2+1$ in the growing range.

For $\mathfrak r\in\{\mathrm{sm},\mathrm{cr},\mathrm{gr}\}$, the coefficient
bounds in \cref{thm:K-residual} and $|a_{d,n}|\le1$ give, pointwise,
\[
 \sup_{n\in\mathcal R_{\mathfrak r}(d)}
 |\mathcal G_{d,n,\mathfrak r}^{[K]}f|
 \le C_{K,\mathfrak r}
 \left(\sup_{0<\rho<1}|G_{d,\rho}f|
 +\sup_{0<\rho<1}|U_{\boldsymbol\omega}G_{d,\rho}
 U_{\boldsymbol\omega}f|\right).
\]
The centered and translated Gaussian maximal bounds
\eqref{eq:known-gaussian} and \eqref{eq:translated-gaussian-maximal}
therefore imply
\begin{equation}
 \left\|\sup_{n\in\mathcal R_{\mathfrak r}(d)}
 \big|\mathcal G_{d,n,\mathfrak r}^{[K]}f\big|\right\|_p
 \le C_{p,K,\mathfrak r}\|f\|_p,
 \label{eq:K-model-maximal}
\end{equation}
where the constant has the dependence stated in
\cref{thm:K-residual}.

Define
\[
 E_{d,n,\mathfrak r}^{[K]}
 :=M_{d,n}-\mathcal G_{d,n,\mathfrak r}^{[K]}.
\]
The normalized ball averages are $\ell^1$ contractions by
\eqref{eq:ball-contraction}; every normalized Gaussian convolution is also an
$\ell^1$ contraction, and parity modulation preserves the $\ell^1$ norm.
Thus
\begin{equation}
 \|E_{d,n,\mathfrak r}^{[K]}\|_{\ell^1\to\ell^1}
 \le C_{K,\mathfrak r}.
 \label{eq:K-residual-l1}
\end{equation}
Above the asymptotic thresholds, Plancherel and \cref{thm:K-residual} give
\begin{align}
 \|E_{d,n,\mathrm{sm}}^{[K]}\|_{\ell^2\to\ell^2}
 &\le C_Kn^{-K},
 &&n_K\le n\le\alpha_0d, \label{eq:K-residual-l2-small}\\
 \|E_{d,n,\mathrm{cr}}^{[K]}\|_{\ell^2\to\ell^2}
 &\le C_{K,a,b}d^{-K},
 &&d\ge d_{K,a,b},\quad ad\le n\le bd,
 \label{eq:K-residual-l2-critical}\\
 \|E_{d,n,\mathrm{gr}}^{[K]}\|_{\ell^2\to\ell^2}
 &\le C_{K,A}d^{-K},
 &&d\ge d_{K,A},\quad \alpha_{1,K}(A)d\le n\le Ad^2.
 \label{eq:K-residual-l2-growing}
\end{align}

Let $1<p\le2$ and put
\[
 \theta=2\left(1-\frac1p\right)=\frac{2(p-1)}p.
\]
Riesz--Thorin interpolation of each fixed-radius residual between
\eqref{eq:K-residual-l1} and the appropriate line above yields
\begin{align}
 \|E_{d,n,\mathrm{sm}}^{[K]}\|_{\ell^p\to\ell^p}
 &\le C_{p,K}n^{-2K(p-1)/p},
 &&n_K\le n\le\alpha_0d, \label{eq:K-residual-lp-small}\\
 \|E_{d,n,\mathrm{cr}}^{[K]}\|_{\ell^p\to\ell^p}
 &\le C_{p,K,a,b}d^{-2K(p-1)/p},
 &&d\ge d_{K,a,b},\quad ad\le n\le bd,
 \label{eq:K-residual-lp-critical}\\
 \|E_{d,n,\mathrm{gr}}^{[K]}\|_{\ell^p\to\ell^p}
 &\le C_{p,K,A}d^{-2K(p-1)/p},
 &&d\ge d_{K,A},\quad \alpha_{1,K}(A)d\le n\le Ad^2.
 \label{eq:K-residual-lp-growing}
\end{align}
For any family $g_n$ and $p\ge1$,
\begin{equation}
 \left\|\sup_n|g_n|\right\|_p^p
 \le\sum_n\|g_n\|_p^p.
 \label{eq:lp-sup-sum}
\end{equation}

\begin{proposition}[finite-range maximal bounds below $2$]
\label{prop:finite-range-p}
Let $1<p\le2$ and let $K$ be an integer such that
\begin{equation}
 K(p-1)\ge1.
 \label{eq:K-p-condition}
\end{equation}
Then the maximal functions on each of the three finite ranges are bounded on
$\ell^p(\mathbb Z^d)$ by constants independent of $d$.
\end{proposition}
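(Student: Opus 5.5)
For each regime $\mathfrak r\in\{\mathrm{sm},\mathrm{cr},\mathrm{gr}\}$ the plan is to split the level set into the bounded exceptional levels below the asymptotic thresholds and the asymptotic levels. On the asymptotic levels we write
\[
 M_{d,n}=\mathcal G_{d,n,\mathfrak r}^{[K]}+E_{d,n,\mathfrak r}^{[K]}
\]
and take the supremum term by term. The exceptional levels number at most $n_K$, $bd_{K,a,b}+1$, or $Ad_{K,A}^2+1$; these counts depend only on $K$ and the fixed regime parameters, not on $d$. For them we use \eqref{eq:lp-sup-sum} together with the contraction \eqref{eq:ball-contraction}, which gives a bound of at most (number of levels)$^{1/p}\|f\|_p$. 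The model part on the asymptotic levels is controlled directly by \eqref{eq:K-model-maximal}.

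The residual part is handled by \eqref{eq:lp-sup-sum} combined with the interpolated fixed-level bounds \eqref{eq:K-residual-lp-small}--\eqref{eq:K-residual-lp-growing}. After raising to the $p$th power, each level contributes a factor $n^{-2K(p-1)}$ in the small range and $d^{-2K(p-1)}$ in the other two. Condition \eqref{eq:K-p-condition} gives $2K(p-1)\ge2$.

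In the small range this yields
\[
 \sum_{n\ge n_K}n^{-2K(p-1)}\le\sum_{n\ge1}n^{-2}<\infty .
\]
In the critical range there are at most $(b-a)d+2$ levels, so the total is $O(d\cdot d^{-2})$. In the growing range there are at most $Ad^2+1$ levels, so the total is
\[
 (Ad^2+1)\,d^{-2K(p-1)}\le C_A .
\]
Taking $p$th roots gives dimension-free residual maximal bounds in all three cases.

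The delicate point is the growing range, where the count of $O(d^2)$ levels must be exactly compensated. This is where the exponent $2K(p-1)/p$ from Riesz--Thorin, with $\theta=2(1-1/p)$ applied to the $\ell^2$ bound $d^{-K}$, is used. I would check that the interpolation is applied to the $\ell^1$ bound \eqref{eq:K-residual-l1} with its constant independent of $n$ and $d$, and that the model operators are defined consistently within one regime so that overlaps cause no issue. For $p=2$ the same argument works with $K\ge1$, which recovers the first-order $\ell^2$ bound as a consistency check.
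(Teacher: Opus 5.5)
Your proposal is correct and follows the paper's own proof essentially line for line: the Gaussian models are controlled by \eqref{eq:K-model-maximal}, the residuals by \eqref{eq:lp-sup-sum} combined with the interpolated bounds \eqref{eq:K-residual-lp-small}--\eqref{eq:K-residual-lp-growing}, and the level counts $O(1)$, $O(d)$, $O(d^2)$ are absorbed by the factors $n^{-2K(p-1)}$ and $d^{-2K(p-1)}$ under $K(p-1)\ge1$. As in the paper, the finitely many sub-threshold levels and dimensions are handled by the contraction \eqref{eq:ball-contraction}.
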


\begin{proof}
The Gaussian parts are controlled by \eqref{eq:K-model-maximal}.  In the
small range, \eqref{eq:K-residual-lp-small} and
\eqref{eq:lp-sup-sum} give
\[
 \left\|\sup_{n_K\le n\le\alpha_0d}
 |E_{d,n,\mathrm{sm}}^{[K]}f|\right\|_p^p
 \le C_{p,K}\sum_{n\ge n_K}n^{-2K(p-1)}\|f\|_p^p
 \le C_{p,K}\|f\|_p^p.
\]
The omitted levels $n<n_K$ form a fixed finite family and are bounded by the
contraction estimate.

A fixed critical interval contains at most $C_{a,b}d+1$ levels.  Thus, for
$d\ge d_{K,a,b}$,
\[
 \left\|\sup_{ad\le n\le bd}|E_{d,n,\mathrm{cr}}^{[K]}f|\right\|_p^p
 \le C_{p,K,a,b}d^{1-2K(p-1)}\|f\|_p^p
 \le C_{p,K,a,b}\|f\|_p^p.
\]
The dimensions below $d_{K,a,b}$ form a fixed finite set; the corresponding
levels are handled directly by \eqref{eq:ball-contraction}.

Finally the growing interval contains at most $Ad^2+1$ levels.  For
$d\ge d_{K,A}$,
\[
 \left\|\sup_{\alpha_{1,K}(A)d\le n\le Ad^2}
 |E_{d,n,\mathrm{gr}}^{[K]}f|\right\|_p^p
 \le C_{p,K,A}d^{2-2K(p-1)}\|f\|_p^p
 \le C_{p,K,A}\|f\|_p^p.
\]
The finitely many dimensions below $d_{K,A}$ are treated in the same way.
The growing-range estimate requires $K(p-1)\ge1$, which also suffices for
the other two ranges.
\end{proof}

\subsection{Coverage of all radii}

We first treat $1<p<2$.  Choose
\begin{equation}
 K=K(p):=\left\lceil\frac1{p-1}\right\rceil.
 \label{eq:K-choice}
\end{equation}
Then \eqref{eq:K-p-condition} holds.  Fix the constants in the same order
as in the $\ell^2$ proof: let $C_L$ be the threshold in
\eqref{eq:known-large-radius}, put
\[
 A=(C_L+1)^2.
\]
For this fixed $K=K(p)$, write
$\alpha_1:=\alpha_{1,K}(A)$ for the growing-range threshold, and take a
fixed critical interval joining the small and growing regimes, for instance
\[
 a=\frac{\alpha_0}{2},\qquad b=2\alpha_1.
\]
With these choices, all constants from the critical and growing ranges
depend only on $p$ and on the fixed
absolute thresholds, never on $d,n$, or $f$.  By
\cref{prop:finite-range-p}, the maximal operator is dimension-free on
each of the three finite ranges.  The tail
\[
 \left\|\sup_{t\ge C_Ld}|\Avg_tf|\right\|_p
 \le C_p\|f\|_p
\]
is \eqref{eq:known-large-radius}.

As in the $\ell^2$ proof, the union of
\[
 [0,\alpha_0d],\qquad
 [\tfrac12\alpha_0d,2\alpha_1d],\qquad
 [\alpha_1d,Ad^2]
\]
contains all squared-radius levels up to $Ad^2$; the third interval may be
empty in small dimensions, in which case the second already extends beyond
$Ad^2$.  Since
$A=(C_L+1)^2$, every $n>Ad^2$ satisfies
$\sqrt n>(C_L+1)d>C_Ld$, and hence belongs to the large-radius tail.
Therefore
\begin{equation}
 \left\|\sup_{t\ge0}|\Avg_tf|\right\|_p
 \le C_p\|f\|_p,
 \qquad 1<p<2,
 \label{eq:full-p-below2}
\end{equation}
with $C_p$ independent of $d$.

For $p=2$, the full first-order estimate is \eqref{eq:first-order-full-l2}.
Marcinkiewicz interpolation of the sublinear maximal
operator between the strong $\ell^2$ bound and the trivial strong
$\ell^\infty$ contraction gives the bound for $2<p<\infty$; the endpoint
$p=\infty$ is the contraction itself.  This completes the proof of
\cref{thm:main}.

\paragraph{Acknowledgments.}
We thank Zecheng Gan for facilitating this collaboration.
Kaiwen Jin also thanks Kelan for being the one certainty through all the oscillations,
and for always standing by him and encouraging him through difficult times.
AI tools produced most of the mathematical proofs and the initial manuscript
draft; the authors revised and edited the manuscript.

\bigskip
\begingroup
\small
\noindent\textsc{Kaiwen Jin}\\
The Hong Kong University of Science and Technology (Guangzhou)\\
\textit{Email:} \href{mailto:kjin327@connect.hkust-gz.edu.cn}{\texttt{kjin327@connect.hkust-gz.edu.cn}}

\medskip
\noindent\textsc{Qingtang Su}\\
Morningside Center of Mathematics\\
Academy of Mathematics and Systems Science, Chinese Academy of Sciences\\
\textit{Email:} \href{mailto:suqingtang@amss.ac.cn}{\texttt{suqingtang@amss.ac.cn}}
\endgroup

\end{document}